\newtheorem{lemma}{Lemma}[section]
\newtheorem{thm}[lemma]{Theorem}
\newtheorem{rem}[lemma]{Remark}
\newtheorem{prop}[lemma]{Proposition}
\newtheorem{cor}[lemma]{Corollary}
\newtheorem{defn}[lemma]{Definition}
\newcommand\matR{{\mathbb{R}}}
\newcommand\calD{{\mathcal D}}
\begin{document}

\title{Diffeological Levi-Civita connections}

\author{Ekaterina~{\textsc Pervova}}

\maketitle

\begin{abstract}
\noindent A diffeological connection on a diffeological vector pseudo-bundle is defined just the usual one on a smooth vector bundle; this is possible to do, because there is a standard diffeological 
counterpart of the cotangent bundle. On the other hand, there is not yet a standard theory of tangent bundles, although there are many suggested and promising versions, such as that of the internal tangent 
bundle, so the abstract notion of a connection on a diffeological vector pseudo-bundle does not automatically provide a counterpart notion for Levi-Civita connections. In this paper we consider the dual of the 
just-mentioned counterpart of the cotangent bundle in place of the tangent bundle (without making any claim about its geometrical meaning). To it, the notions of compatibility with a pseudo-metric and 
symmetricity can be easily extended, and therefore the notion of a Levi-Civita connection makes sense as well. In the case when $\Lambda^1(X)$, the counterpart of the cotangent bundle, is finite-dimensional, 
there is an equivalent Levi-Civita connection on it as well.

\noindent\textsc{ MSC (2010)}: 53C15 (primary), 57R45, 57R45 (secondary)
\end{abstract}

\section*{Introduction}

Diffeological spaces (introduced originally in \cite{So1}, \cite{So2}; see \cite{iglFibre} for their early development, and \cite{iglesiasBook} for a recent and comprehensive treatment) are a generalization of the
usual smooth manifolds, and the theory describing them includes counterparts of a great number of the standard constructions associated to smooth manifolds. One item noticeably lacking, though, is 
diffeology's own version of the tangent bundle; there is not a standard, universally agreed upon, construction, although many different ones have been proposed, such as the \emph{internal tangent 
bundle} \cite{CWtangent} and the \emph{bundle of $1$-tangent vectors} \cite{iglesiasBook}. This is of course an obstacle for carrying over to the diffeological context whatever notion that
uses tangent vectors, with covariant derivatives being an instance of this.

In \cite{connections}, of which the present work is a sequel, we considered a version of a diffeological connection (which is different from the one briefly considered in \cite{iglesiasBook}) that mimics as much 
as possible the usual notion of a connection on a smooth vector bundle $E$ over a smooth manifold $M$ as an operator $C^{\infty}(M,E)\to C^{\infty}(M,T^*M\otimes E)$. We just have a diffeological vector 
pseudo-bundle $V\to X$ instead of $E\to M$ and the pseudo-bundle $\Lambda^1(X)$ of diffeological differential $1$-forms instead the standard cotangent bundle. This works as an abstract definition well 
enough, and as a substitute of the tangent bundle, we simply used the dual pseudo-bundle $(\Lambda^1(X))^*$, mainly for the reason that it does reflect the usual duality between $TM$ and $T^*M$ (although 
there may not be a diffeomorphism by duality).

In the present work we continue on this subject, defining Levi-Civita connections on $(\Lambda^1(X))^*$ and on $\Lambda^1(X)$. The basic definition is identical to the standard one in substance (a so-called 
\emph{pseudo-metric} is used instead of a Riemannian one). The assumption that $\Lambda^1(X)$ admits a pseudo-metric $g$, so in particular it has finite-dimensional fibres, is used throughout. The abstract 
definitions are almost identical to the standard ones, and the corresponding notion of a diffeological Levi-Civita connection turns out to have, under the appropriate assumptions, the same uniqueness property 
at least. In order to give a more concrete angle to this topic we consider the behavior of the Levi-Civita connections on $\Lambda^1(X)$ under the operation of diffeological gluing; however, we only treat 
the case of gluing along a diffeomorphism (a diffeological one, so this is still much more general than the standard case, since its domain of definition can be anything). Having to put this restriction in 
is somewhat disappointing (the more interesting applications are more likely to be found for non-diffeomorphic bluings); but at least we are able to give a more or less complete treatment, showing in the end 
that the connection on $\Lambda^1(X_1\cup_f X_2)$ induced by the Levi-Civita connections on the factors, that is, on $\Lambda^1(X_1)$ and $\Lambda^1(X_2)$, is itself the Levi-Civita connection on 
$\Lambda^1(X_1\cup_f X_2)$ relative to the appropriate pseudo-metric.

\paragraph{Acknowledgements} This work owes much to an indirect (but no less significant for that) contribution from Prof. Riccardo Zucchi.

\section{Definitions}

We now recall some of the main notions of diffeology. After that, we go into some detail about diffeological differential forms and the abstract notion of a diffeological connection.
The basic concepts have appeared originally in \cite{So1} and \cite{iglFibre} (the \emph{diffeological gluing} is a partial case of quotienting, and the \emph{gluing diffeology} is an instance of the quotient
diffeology); a recent and comprehensive source is \cite{iglesiasBook}.

\subsection{Diffeological spaces, diffeological gluing, and pseudo-bundles}

The notion of a \textbf{diffeological space} \cite{So1} is a generalization of that of a smooth manifold; it is defined as any set $X$ endowed with a \textbf{diffeology} $\calD$. A diffeology (or a
\textbf{diffeological structure}) is a collection of maps $U\to X$, called \textbf{plots}, for all open sets $U$ in all $\matR^n$, that includes all constant maps and all pre-compositions with the
usual smooth maps into $U$, and that satisfies the following \emph{sheaf condition}:
\begin{itemize}
\item for any $p:U\to X$ and for any open cover $\{U_i\}$ of $U$, all restrictions $p|_{U_i}$ being plots implies that $p$ itself is a plot.
\end{itemize}
If $X$ and $Y$ are two diffeollogical spaces, $f:X\to Y$ is said to be \textbf{smooth} if its pre-composition with any plot of $X$ is a plot of $Y$. \emph{Vice versa}, if any plot of the target space $Y$ locally lifts 
to a plot of $X$, the map $f$ is said to be a \textbf{subduction}; the diffeology of $Y$ is said to be the \textbf{pushforward diffeology} (relative $f$). The local shape of any plot of the pushforward diffeology is 
$f\circ p$ for some plot $p$ of $X$. 

All subsets, quotients, disjoint unions, and direct products of diffeological spaces, as well as spaces of maps between pairs of such, are canonically diffeological spaces. In particular, any subset $Y$ of a 
diffeological space $X$ carries the \textbf{subset diffeology} that is the set of all plots of $X$ with range wholly contained in $Y$. For any equivalence relation $\sim$ on $X$, the quotient $X/\sim$ has the 
\textbf{quotient diffeology} which is the pushforward of the diffeology of $X$ by the quotient projection. If $X$ and $Y$ are two diffeological spaces, and $C^{\infty}(X,Y)$ denotes the set of all smooth maps 
between them, then it is also a diffeological space, for the canonically chosen \textbf{functional diffeology}; this is the coarsest diffeology such that the evaluation map $(f,x)\mapsto f(x)$ is smooth (we refer 
the reader to \cite{iglesiasBook} for other definitions, and to \cite{wu} for definitions of constructions regarding diffeological vector spaces).

The operation of \textbf{diffeological gluing} is a simple way to obtain non-standard diffeological spaces, perhaps out of standard building blocks. It mimics in fact the usual operation of topological gluing. Its 
precise definition is as follows.

\begin{defn}\label{gluing:defn}
Let $X_1$ and $X_2$ be two diffeological spaces, and let $f:X_1\supseteq Y\to X_2$ be a map smooth for the subset diffeology on $Y$. The result of the \textbf{diffeological gluing of $X_1$ to $X_2$ 
along $f$} is the space
$$X_1\cup_f X_2:=(X_1\sqcup X_2)/\sim,\mbox{ where }X_1\supseteq Y\ni y\sim f(y)\in X_2,$$ endowed with the quotient diffeology of the disjoint union diffeology  on $X_1\sqcup X_2$. This diffeology 
is called the \textbf{gluing diffeology}.
\end{defn}

The gluing diffeology is a rather weak diffeology; for instance, if $X_1=X_2=\matR$ and $f:\{0\}\to\{0\}$, the resulting space $X_1\cup_f X_2$ can be smoothly identified with the union of the two 
coordinate axes in $\matR^2$. The gluing diffeology turns out to be strictly finer than the subset diffeology relative to the standard diffeology on $\matR^2$, see Example 2.67 in \cite{watts}.

For technical reasons, the following \textbf{standard inductions} are useful. Let 
$$i_1:(X_1\setminus Y)\hookrightarrow X_1\sqcup X_2\to X_1\cup_f X_2\,\,\,\mbox{ and }\,\,\,i_2:X_2\hookrightarrow X_1\sqcup X_2\to X_1\cup_f X_2$$ be given by the obvious inclusions into the 
disjoint union and by the quotient projection. These maps turn out to be inductions, with disjoint images that cover $X_1\cup_f X_2$. This is one reason why they are useful for defining maps from/into 
$X_1\cup_f X_2$.

The notion of a \textbf{diffeological vector pseudo-bundle} (that is a partial case of the \emph{diffeological fibre bundle} \cite{iglFibre}, where the concept originally appears, and which goes under the 
name of \emph{regular vector bundle} in \cite{vincent} and that of \emph{diffeological vector space over $X$} in \cite{CWtangent}) differs from that of the usual smooth vector bundle under two respects. 
First, we obviously consider the diffeological notion of smoothness, and second, it is not required to be locally trivial. The precise definition is as follows.

\begin{defn}
Let $V$ and $X$ be two diffeological spaces. A \textbf{diffeological vector pseudo-bundle} is a smooth surjective map $\pi:V\to X$ such that for every $x\in X$ the pre-image $\pi^{-1}(x)$ carries a 
vector space structure, and the following are satisfied:
\begin{enumerate}
\item The corresponding addition map is smooth as a map $V\times_X V\to V$ for the subset diffeology on $V\times_X V\subset V\times V$ and the product diffeology on $V\times V$;
\item The scalar multiplication is smooth as a map $\matR\times V\to V$, for the standard diffeology on $\matR$ and the product diffeology on $\matR\times V$;
\item The zero section is smooth as a map $X\to V$.
\end{enumerate}
\end{defn}

All the usual operations, such as taking sub-bundles, quotient bundles, direct sums, tensor products, and taking dual pseudo-bundles are defined for diffeological vector pseudo-bundles; see \cite{vincent} for 
the original source, \cite{wu} for another treatment of the tensor product (of vector spaces), and \cite{pseudobundles} for some details, as well as the behavior of the operations under diffeological gluing. Since 
diffeological vector spaces in general do not carry smooth scalar product, diffeological vextor pseudo-bundles might not have proper counterparts of Riemannian metrics. The closest possible substitute is the 
simple concept of a \textbf{pseudo-metric} (see \cite{pseudobundles} and \cite{pseudometric-pseudobundle} for details).

\begin{defn}
Let $\pi:V\to X$ be a finite-dimensional diffeological vector pseudo-bundle. A \textbf{pseudo-metric} on it is a smooth map $g:X\to V^*\otimes V^*$ such that for all $x\in X$ the bilinear form $g(x)$ 
is a symmetric semi-definite positive form of rank $\dim((\pi^{-1}(x))^*)$.
\end{defn}

If a given pseudo-bundle $\pi:V\to X$ can be endowed with a pseudo-metric $g$ then, as in the standard case, there is the standard \textbf{pairing map} that we denote $\Phi_g$. It is given as usual by 
$\Phi_g(v)(\cdot)=g(\pi(v))(v,\cdot)$ and is smooth, fibrewise linear, and surjective onto $V^*$. We can thus define a map $g^*:X\to V^{**}\otimes V^{**}$ whose value $g^*(x)$ for any $x\in X$ is a 
pseudo-metric on $\pi^{-1}(x)$. This map is determined by 
$$g^*(x)(\Phi_g(v),\Phi_g(w))=g(x)(v,w),$$ which is well-defined because $\Phi_g$ is surjective and its kernel obviously consists of the isotropic elements in the fibres. On the other hand, $\Phi_g$ is in general 
not invertible.

\subsection{The pseudo-bundle of diffeological $1$-forms $\Lambda^1(X)$}

There exists a rather well-developed theory of differential forms on diffeological spaces (a very recent exposition, although it is not the original source, can be found in \cite{iglesiasBook}); we recall it 
for $1$-forms.

\subsubsection{The construction}

A \textbf{differential $1$-form} on a diffeological space $X$ is any collection $\{\omega(p)\}$, where $p:U\to X$ runs over all plots of $X$ and $\omega(p)\in C^{\infty}(U,\Lambda^1(U))$ is a usual 
differential $1$-form on the domain $U$, that satisfies the following \textbf{smooth compatibility condition for differential forms}. For any two plots $p:U\to X$ and $q:U'\to X$ such that there exists 
a smooth in the usual sense map $F:U'\to U$ with the property that $q=p\circ F$, we have $\omega(q)=F^*(\omega(p))$, where $F^*$ is the usual pullback map. An instance of a diffeological differential 
form is the \textbf{differential} of a (diffeologically) smooth function $f:X\to\matR$ which is denoted by the usual symbol $df$ and is defined by $df(p)=d(f\circ p)$ for each and every plot $p$ of $X$.

The set of all differential $1$-forms on a given diffeological space $X$ is denoted by $\Omega^1(X)$. It has a natural vector space structure, where the addition and scalar multiplication are given 
pointwise, and a standard diffeology termed \textbf{functional diffeology} (see \cite{iglesiasBook}). The pseudo-bundle $\Lambda^1(X)$ is obtained as a quotient of the direct product $X\times\Omega^1(X)$, 
endowed with the product diffeology and viewed as the trivial bundle over $X$ (via the projection onto the first coordinate). The sub-bundle over which the quotient is taken is that given by the union 
$\bigcup_{x\in X}\{x\}\times\Omega_x^1(X)$ of all subspaces $\Omega_x^1(X)$ of so-called \textbf{vanishing forms}. A form $\omega\in\Omega^1(X)$ is said to be \textbf{vanishing at a given point $x\in X$} if 
for every plot $p:U\to X$ of $X$ such that $U\ni 0$ and $p(0)=x$ we have $\omega(p)(0)=0$. The set $\Omega_x^1(X)$ of all forms vanishing at $x$ is a vector subspace of $\Omega^1(X)$ and therefore the 
collection $\bigcup_{x\in X}\{x\}\times\Omega_x^1(X)$ of all of them is a sub-bundle of $X\times\Omega^1(X)\to X$ (for the subset diffeology). The pseudo-bundle $\Lambda^1(X)$ is the corresponding quotient 
pseudo-bundle,
$$\Lambda^1(X):=\left(X\times\Omega^1(X)\right)/\left(\bigcup_{x\in X}\{x\}\times\Omega_x^1(X)\right).$$ Its fibre at $x$ is $\Lambda_x^1(X)=\Omega^1(X)/\Omega_x^1(X)$.

\subsubsection{Diffeological $1$-forms and gluing}

The behavior of diffeological $1$-forms under gluing depends much on the properties of $f$. In particular, we will always assume that $f$ is a diffeomorphism satisfying some other technical conditions (see 
below), although some statements can be made without this assumption, such as the one describing the structure of $\Omega^1(X_1\cup_f X_2)$ (see \cite{forms-gluing}). The statement of this description, 
as well as that of the pseudo-bundle $\Lambda^1(X_1\cup_f X_2)$ relative to $\Lambda^1(X_1)$ and $\Lambda^1(X_2)$, requires some additional notions.

\begin{defn}\label{compatible:forms:defn}
Let $X_1$ and $X_2$ be two diffeological spaces, and let $f:X_1\supseteq Y\to X_2$ be a smooth map. Two forms $\omega_1\in\Omega^1(X_1)$ and $\omega_2\in\Omega^1(X_2)$ are called 
\textbf{compatible} if for every plot $p_1$ of the subset diffeology on $Y$ we have $\omega_1(p_1)=\omega_2(f\circ p_1)$.
\end{defn}

The notion of compatibility of forms in $\Omega^1(X_1)$ and $\Omega^1(X_2)$ can then be extended one of certain elements of $\Lambda^1(X_1)$ and $\Lambda^1(X_2)$.

\begin{defn}
Let $y\in Y$. Two cosets $\omega_1+\Omega_y^1(X_1)\in\Lambda_y^1(X_1)$ and $\omega_2+\Omega_{f(y)}^1(X_2)\in\Lambda_{f(y)}^1(X_2)$ are said to be \textbf{compatible} if for any 
$\omega_1'\in\Omega_y^1(X_1)$ and for any $\omega_2'\in\Omega_{f(y)}^1(X_2)$ the forms $\omega_1+\omega_1'$ and $\omega_2+\omega_2'$ are \textbf{compatible}.  
\end{defn}

We define next the conditions on the gluing map $f$ needed in most of our statements. Let $i:Y\hookrightarrow X_1$ and $j:f(Y)\hookrightarrow X_2$ be the natural inclusions; consider the 
corresponding pullback maps $i^*:\Omega^1(X_1)\to\Omega^1(Y)$ and $j^*:\Omega^1(X_2)\to\Omega^1(f(Y))$. The condition that we will most frequently use is that 
$$i^*(\Omega^1(X_1))=(f^*j^*)(\Omega^1(X_2)).$$ This condition ensures, in particular, that every fibre of $\Lambda^1(X_1\cup_f X_2)$ can be identified with either a fibre of one of $\Lambda^1(X_1)$, 
$\Lambda^1(X_2)$, or with a subspace of their direct sum, as Theorem \ref{individual:fibres:of:lambda:thm} below shows. 

\begin{thm}\label{individual:fibres:of:lambda:thm} \emph{(\cite{forms-gluing})}
Let $X_1$ and $X_2$ be two diffeological spaces, let $f:X_1\supseteq Y\to X_2$ be a diffeomorphism of its domain with its image such that there is the equality $i^*(\Omega^1(X_1))=(f^*j^*)(\Omega^1(X_2))$, 
and let $x\in X_1\cup_f X_2$. Then
$$\Lambda_x^1(X_1\cup_f X_2)\cong\left\{\begin{array}{cl} 
\Lambda_{i_1^{-1}(x)}(X_1) & \mbox{if }x\in i_1(X_1\setminus Y) \\
\Lambda_{f^{-1}(i_2^{-1}(x))}^1(X_1)\oplus_{comp}\Lambda_{i_2^{-1}(x)}^1(X_2)& \mbox{if }x\in i_2(f(Y)) \\
\Lambda_{i_2^{-1}(x)}(X_2) & \mbox{if }x\in i_2(X_2\setminus f(Y)) 
\end{array}\right.$$
\end{thm}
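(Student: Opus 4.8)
The plan is to reduce the statement to the known structure of the space of global $1$-forms together with a local analysis of the plots of $X_1\cup_f X_2$ passing through $x$. First I would recall from \cite{forms-gluing} that, under the standing assumption $i^*(\Omega^1(X_1))=(f^*j^*)(\Omega^1(X_2))$, pulling back along the two canonical maps $X_1\to X_1\cup_f X_2$ and $X_2\to X_1\cup_f X_2$ (inclusion into the disjoint union followed by the quotient projection; the second of these is the standard induction $i_2$, the first restricts to $i_1$ on $X_1\setminus Y$) identifies $\Omega^1(X_1\cup_f X_2)$ with the space of \emph{compatible pairs}, that is, those $(\omega_1,\omega_2)\in\Omega^1(X_1)\times\Omega^1(X_2)$ with $i^*\omega_1=(f^*j^*)\omega_2$. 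Since by construction $\Lambda_x^1(\,\cdot\,)$ is the quotient of the space of global $1$-forms by the subspace of those vanishing at $x$, it then suffices, for each of the three types of point $x$, to describe $\Omega_x^1(X_1\cup_f X_2)$ as a subspace of the space of compatible pairs and to compute the corresponding quotient.

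The first step in each case is to describe the plots of $X_1\cup_f X_2$ through $x$. Using that $i_1$ and $i_2$ are inductions with disjoint images covering $X_1\cup_f X_2$, that the gluing diffeology is a pushforward, and that the only non-trivial identification in $X_1\sqcup X_2$ is $y\sim f(y)$, one sees the following. If $x\in i_1(X_1\setminus Y)$, write $x_1:=i_1^{-1}(x)$; then every plot through $x$ is, near the relevant point of its domain, the image of a plot of $X_1$ through $x_1$ (and never of a plot of $X_2$, since the images of $i_1$ and $i_2$ are disjoint). Symmetrically if $x\in i_2(X_2\setminus f(Y))$, with $x_2:=i_2^{-1}(x)$. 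If $x\in i_2(f(Y))$, write $y:=f^{-1}(i_2^{-1}(x))$; then every plot through $x$ is locally the image of a plot of $X_1$ through $y$ or of a plot of $X_2$ through $f(y)$. Evaluating a compatible pair $(\omega_1,\omega_2)$, regarded as a form on $X_1\cup_f X_2$, on such plots shows that it vanishes at $x$ exactly when $\omega_1\in\Omega_{x_1}^1(X_1)$ in the first case, $\omega_2\in\Omega_{x_2}^1(X_2)$ in the third case, and $\omega_1\in\Omega_y^1(X_1)$ together with $\omega_2\in\Omega_{f(y)}^1(X_2)$ in the second case.

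It then remains to take the quotients. In the first case I would compose the projection $(\omega_1,\omega_2)\mapsto\omega_1$ with the quotient $\Omega^1(X_1)\to\Lambda_{x_1}^1(X_1)$; this composition is surjective, since for any $\omega_1\in\Omega^1(X_1)$ the assumption on $f$ provides $\omega_2\in\Omega^1(X_2)$ with $i^*\omega_1=(f^*j^*)\omega_2$, so that $(\omega_1,\omega_2)$ is a compatible pair, and its kernel is precisely $\Omega_x^1(X_1\cup_f X_2)$ by the previous paragraph; hence $\Lambda_x^1(X_1\cup_f X_2)\cong\Lambda_{x_1}^1(X_1)$. The third case is identical with the roles of $X_1$ and $X_2$ interchanged, again using the hypothesis on $f$ to lift forms. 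In the second case I would instead use the map sending a compatible pair $(\omega_1,\omega_2)$ to $(\omega_1+\Omega_y^1(X_1),\,\omega_2+\Omega_{f(y)}^1(X_2))\in\Lambda_y^1(X_1)\oplus\Lambda_{f(y)}^1(X_2)$; its kernel is again $\Omega_x^1(X_1\cup_f X_2)$, so $\Lambda_x^1(X_1\cup_f X_2)$ is isomorphic to its image, which one identifies with the compatible sum $\Lambda_y^1(X_1)\oplus_{comp}\Lambda_{f(y)}^1(X_2)$.

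The hard part is this last identification in the second case: one must show that a pair of cosets is in the image of that map exactly when those cosets are compatible in the sense defined above, which forces a careful comparison of the subspaces $\Omega_y^1(X_1)$, $\Omega_{f(y)}^1(X_2)$ and $\Omega_y^1(Y)$ under $i^*$ and $f^*j^*$, and it is here that the hypotheses that $f$ is a diffeomorphism onto its image and that $i^*(\Omega^1(X_1))=(f^*j^*)(\Omega^1(X_2))$ are essential. A preliminary technical point underpinning the whole argument --- that every compatible pair really does glue, via the sheaf condition, to a well-defined global $1$-form on $X_1\cup_f X_2$ --- is the one I would simply cite from \cite{forms-gluing}.
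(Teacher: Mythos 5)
First, a point of reference: this paper does not prove Theorem \ref{individual:fibres:of:lambda:thm} at all --- it is quoted from \cite{forms-gluing} --- so your proposal can only be measured against the strategy of that reference. Your reduction is the natural one and consistent with it: identify $\Omega^1(X_1\cup_f X_2)$ with the space of compatible pairs $(\omega_1,\omega_2)$, determine $\Omega_x^1(X_1\cup_f X_2)$ in each of the three cases by locally lifting plots through $x$ to plots of $X_1$ or $X_2$ (your case analysis here is correct, since $i_1$, $i_2$ have disjoint images and the only identification is $y\sim f(y)$), and then compute the quotient. The first and third cases are fine as you present them; surjectivity of $(\omega_1,\omega_2)\mapsto\omega_1+\Omega_{x_1}^1(X_1)$ does indeed use the hypothesis $i^*(\Omega^1(X_1))=(f^*j^*)(\Omega^1(X_2))$ to complete a single form to a compatible pair, and the kernel is exactly the vanishing subspace you computed.

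There remain two genuine gaps. The first is the one you yourself flag and then leave untouched: in the case $x\in i_2(f(Y))$ the whole content of the theorem is that the image of your map is all of $\Lambda_y^1(X_1)\oplus_{comp}\Lambda_{f(y)}^1(X_2)$, i.e.\ that any pair of cosets satisfying $i_{\Lambda}^*\alpha_1=f_{\Lambda}^*(j_{\Lambda}^*\alpha_2)$ admits compatible representatives: given $i^*\omega_1-f^*j^*\omega_2\in\Omega_y^1(Y)$ one must actually produce $\omega_1'\in\Omega_y^1(X_1)$ and $\omega_2'\in\Omega_{f(y)}^1(X_2)$ with $i^*(\omega_1+\omega_1')=f^*j^*(\omega_2+\omega_2')$, and this splitting is precisely where the hypothesis on $f$ is consumed. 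Saying that it ``forces a careful comparison of subspaces'' names the difficulty without resolving it, so as written the central case is asserted rather than proved. The second gap is diffeological: the isomorphisms in the statement are diffeomorphisms of fibres for their subset diffeologies (the paper immediately afterwards refers to ``the diffeomorphisms of individual fibres'' and introduces the stronger condition $\calD_1^{\Omega}=\calD_2^{\Omega}$ only to extend them across collections of fibres). Your argument produces linear bijections from quotient linear algebra, but never checks smoothness of the maps --- and in particular of their inverses, which requires lifting plots of a fibre of $\Lambda^1(X_1)$ (or of the compatible sum) to plots of compatible pairs --- so the ``$\cong$'' of the theorem is not yet established in the sense in which the paper uses it.
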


The diffeomorphisms of individual fibres mentioned in Theorem \ref{individual:fibres:of:lambda:thm} extend across certain collections of fibres of $\Lambda^1(X_1\cup_f X_2)$. This requires another, stronger 
condition, which is as follows. Let $\calD_1^{\Omega}$ be the diffeology on $\Omega^1(Y)$ that is the pushforward of the standard 
functional diffeology on $\Omega^1(X_1)$ by the map $i^*$, and let $\calD_2^{\Omega}$ be another diffeology on $\Omega^1(Y)$ and precisely the pushforward of the standard diffeology of $\Omega^1(X_2)$ 
by the map $f^*j^*$. Then both $\calD_1^{\Omega}$ and $\calD_2^{\Omega}$ are contained in the standard diffeology of $\Omega^1(Y)$, in general properly. The condition needed for extending the above 
diffeomorphisms of fibres is
$$\calD_1^{\Omega}=\calD_2^{\Omega};$$ we will use it in what follows, along with certain maps $\tilde{\rho}_1^{\Lambda}$ and $\tilde{\rho}_2^{\Lambda}$ that act
$$\tilde{\rho}_1^{\Lambda}:\Lambda^1(X_1\cup_f X_2)\supset(\pi^{\Lambda})^{-1}(i_1(X_1\setminus Y)\cup i_2(f(Y)))\to\Lambda^1(X_1)\,\,\,\mbox{ and}$$
$$\tilde{\rho}_2^{\Lambda}:\Lambda^1(X_1\cup_f X_2)\supset(\pi^{\Lambda})^{-1}(i_2(X_2))\to\Lambda^1(X_2).$$ The action of $\tilde{\rho}_1^{\Lambda}$ can be described by saying that on fibres over 
points in $i_1(X_1\setminus Y)$ it acts by identity, and on fibres over points in $i_2(f(Y))$ it acts as the projection onto the first factor; the map $\tilde{\rho}_2^{\Lambda}$ acts in a similar way. These maps 
allow, in particular, to characterize the diffeology of $\Lambda^1(X_1\cup_f X_2)$ as follows: 
\begin{itemize}
\item if $\calD_1^{\Omega}=\calD_2^{\Omega}$ and $U$ is a domain then $p:U\to\Lambda^1(X_1\cup_f X_2)$ is a plot of $\Lambda^1(X_1\cup_f X_2)$ if and only if $\tilde{\rho}_1^{\Lambda}\circ p$ and 
$\tilde{\rho}_2^{\Lambda}\circ p$ are both smooth wherever defined.
\end{itemize}
See \cite{forms-gluing} for details on these two maps.

Finally, let us consider pseudo-metrics on $\Lambda^1(X_1\cup_f X_2)$; more precisely, assuming that $\Lambda^1(X_1)$ and $\Lambda^1(X_2)$ admit pseudo-metrics, and that these pseudo-metrics 
are compatible in a certain natural sense (that we define immediately below), we describe a specific pseudo-metric on $\Lambda^1(X_1\cup_f X_2)$ induced by them.

\begin{defn}\label{compatible:pseudo-metrics:lambda:defn}
Let $g_1^{\Lambda}$ and $g_2^{\Lambda}$ be pseudo-metrics on $\Lambda^1(X_1)$ and $\Lambda^1(X_2)$ respectively. They are said to be \textbf{compatible} if for all $y\in Y$ and for any two 
compatible (in the sense of Definition \ref{compatible:forms:defn}) pairs $(\omega_1,\omega_2)$ and $(\mu_1,\mu_2)$, where $\omega_1,\mu_1\in(\pi_1^{\Lambda})^{-1}(y)$ and 
$\omega_2,\mu_2\in(\pi_2^{\Lambda})^{-1}(f(y))$ we have 
$$g_1^{\Lambda}(y)(\omega_1,\mu_1)=g_2^{\Lambda}(f(y))(\omega_2,\mu_2).$$
\end{defn} 

Then the following is true.

\begin{thm}\label{induced:pseudo-metric:on:glued:lambda:thm}
Let $X_1$ and $X_2$ be two diffeological spaces, and let $f:X_1\supseteq Y\to X_2$ be a diffeomorphism such that $\calD_1^{\Omega}=\calD_2^{\Omega}$. Suppose furthermore that $\Lambda^1(X_1)$ and 
$\Lambda^1(X_2)$ admit pseudo-metrics $g_1^{\Lambda}$ and $g_2^{\Lambda}$ compatible with $f$. Then $g^{\Lambda}$ defined by
$$g^{\Lambda}(x)(\cdot,\cdot)=\left\{\begin{array}{ll} 
g_1^{\Lambda}(i_1^{-1}(x))(\tilde{\rho}_1^{\Lambda}(\cdot),\tilde{\rho}_1^{\Lambda}(\cdot)) & \mbox{if }x\in i_1(X_1\setminus Y), \\
\frac12\left(g_1^{\Lambda}(f^{-1}(i_2^{-1}(x)))(\tilde{\rho}_1^{\Lambda}(\cdot),\tilde{\rho}_1^{\Lambda}(\cdot))+
g_2^{\Lambda}(i_2^{-1}(x))(\tilde{\rho}_2^{\Lambda}(\cdot),\tilde{\rho}_2^{\Lambda}(\cdot))\right) & \mbox{if }x\in i_2(f(Y)),\\
g_2^{\Lambda}(i_2^{-1}(x))(\tilde{\rho}_2^{\Lambda}(\cdot),\tilde{\rho}_2^{\Lambda}(\cdot)) & \mbox{if }x\in i_2(X_2\setminus f(Y)), 
\end{array}\right.$$
where each $(\cdot)$ stands for the value taken on an arbitrary element of the fibre at $x$ of $\Lambda^1(X_1\cup_f X_2)$, is a pseudo-metric on $\Lambda^1(X_1\cup_f X_2)$.
\end{thm}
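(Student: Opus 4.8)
The plan is to verify, in turn, the three defining properties of a pseudo-metric for the map $g^\Lambda$: that each $g^\Lambda(x)$ is a well-defined symmetric bilinear form on the fibre $\Lambda^1_x(X_1\cup_f X_2)$; that it is positive semidefinite and of rank equal to the dimension of the dual fibre; and that $x\mapsto g^\Lambda(x)$ is smooth. Throughout I would identify the fibres of $\Lambda^1(X_1\cup_f X_2)$ with the spaces furnished by Theorem \ref{individual:fibres:of:lambda:thm}, and I would use that $\tilde\rho_1^\Lambda$ and $\tilde\rho_2^\Lambda$ are smooth and fibrewise linear, that over $i_1(X_1\setminus Y)$ (resp. over $i_2(X_2\setminus f(Y))$) the map $\tilde\rho_1^\Lambda$ (resp. $\tilde\rho_2^\Lambda$) acts as the identity whereas over $i_2(f(Y))$ both act as the two projections of the fibre, seen as the subspace of compatible pairs in $\Lambda^1_y(X_1)\oplus\Lambda^1_{f(y)}(X_2)$, onto its summands, and that they are fibrewise surjective onto the corresponding fibres of $\Lambda^1(X_1)$ and $\Lambda^1(X_2)$. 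I would also use that $g_1^\Lambda$ and $g_2^\Lambda$, being pseudo-metrics on finite-dimensional pseudo-bundles, are fibrewise positive \emph{definite}.

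Over $i_1(X_1\setminus Y)$ and over $i_2(X_2\setminus f(Y))$ there is nothing to check: there $g^\Lambda(x)$ is literally $g_1^\Lambda(i_1^{-1}(x))$, resp. $g_2^\Lambda(i_2^{-1}(x))$, a pseudo-metric by hypothesis. So fix $x\in i_2(f(Y))$, put $y=f^{-1}(i_2^{-1}(x))$, and regard the fibre as the subspace $\Lambda^1_y(X_1)\oplus_{comp}\Lambda^1_{f(y)}(X_2)$ of $\Lambda^1_y(X_1)\oplus\Lambda^1_{f(y)}(X_2)$. Then $g^\Lambda(x)$ is a sum of pullbacks, along the well-defined linear maps $\tilde\rho_1^\Lambda$ and $\tilde\rho_2^\Lambda$, of the well-defined symmetric positive semidefinite forms $g_1^\Lambda(y)$ and $g_2^\Lambda(f(y))$, hence is itself well defined, symmetric and positive semidefinite. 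For the rank, if $v$ is in the radical of $g^\Lambda(x)$ then $0=g^\Lambda(x)(v,v)=\tfrac12\bigl(g_1^\Lambda(y)(\tilde\rho_1^\Lambda v,\tilde\rho_1^\Lambda v)+g_2^\Lambda(f(y))(\tilde\rho_2^\Lambda v,\tilde\rho_2^\Lambda v)\bigr)$ is a sum of two nonnegative numbers, so both vanish; positive definiteness of $g_1^\Lambda(y)$ and $g_2^\Lambda(f(y))$ then forces $\tilde\rho_1^\Lambda v=0=\tilde\rho_2^\Lambda v$, and since these are the two coordinates of $v$ in the direct sum, $v=0$. Thus $g^\Lambda(x)$ is positive definite, hence of rank $\dim\Lambda^1_x(X_1\cup_f X_2)=\dim(\Lambda^1_x(X_1\cup_f X_2))^*$, as required. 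I record for the next step that on this fibre the compatibility of $g_1^\Lambda$ and $g_2^\Lambda$ (Definition \ref{compatible:pseudo-metrics:lambda:defn}) says exactly that the two summands already coincide, so that $g^\Lambda(x)(\cdot,\cdot)=g_1^\Lambda(y)(\tilde\rho_1^\Lambda\cdot,\tilde\rho_1^\Lambda\cdot)=g_2^\Lambda(f(y))(\tilde\rho_2^\Lambda\cdot,\tilde\rho_2^\Lambda\cdot)$.

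For smoothness I would invoke the description, from \cite{forms-gluing} together with \cite{pseudobundles} (the latter to push the plot description of $\Lambda^1(X_1\cup_f X_2)$ through the dual and tensor operations, which are smooth), of the plots of the tensor square $(\Lambda^1(X_1\cup_f X_2))^*\otimes(\Lambda^1(X_1\cup_f X_2))^*$: if $\hat\rho_1$, $\hat\rho_2$ denote the maps into $(\Lambda^1(X_k))^*\otimes(\Lambda^1(X_k))^*$ induced by $\tilde\rho_1^\Lambda$, $\tilde\rho_2^\Lambda$, then a map into the tensor square is a plot precisely when its composites with $\hat\rho_1$ and with $\hat\rho_2$ are smooth wherever defined. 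Consequently it suffices to show that $\hat\rho_1\circ g^\Lambda$ and $\hat\rho_2\circ g^\Lambda$, which are defined over the images of $X_1$ and of $X_2$ in $X_1\cup_f X_2$ respectively, are smooth there; for then $\hat\rho_k\circ g^\Lambda\circ q$ is smooth wherever defined for every plot $q$ of $X_1\cup_f X_2$. Now the three branches of the definition of $g^\Lambda$, combined with the compatibility identity just recorded, collapse to two \emph{uniform} formulas, namely $g^\Lambda(x)(\cdot,\cdot)=g_1^\Lambda(\rho_1(x))(\tilde\rho_1^\Lambda\cdot,\tilde\rho_1^\Lambda\cdot)$ for every $x$ in the image of $X_1$ and $g^\Lambda(x)(\cdot,\cdot)=g_2^\Lambda(\rho_2(x))(\tilde\rho_2^\Lambda\cdot,\tilde\rho_2^\Lambda\cdot)$ for every $x$ in the image of $X_2$, where $\rho_1$, $\rho_2$ are the smooth base maps underlying $\tilde\rho_1^\Lambda$, $\tilde\rho_2^\Lambda$; the point at which compatibility is used is exactly the validity of these formulas over $i_2(f(Y))$. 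Since $\hat\rho_k$ recovers a form from its pullback along the fibrewise surjection $\tilde\rho_k^\Lambda$, the uniform formulas yield $\hat\rho_1\circ g^\Lambda=g_1^\Lambda\circ\rho_1$ and $\hat\rho_2\circ g^\Lambda=g_2^\Lambda\circ\rho_2$, both smooth as composites of smooth maps. This completes the verification.

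The fibrewise analysis is routine once Theorem \ref{individual:fibres:of:lambda:thm} is in hand; the step I expect to be the main obstacle is the smoothness, and within it the bookkeeping with $\tilde\rho_1^\Lambda$, $\tilde\rho_2^\Lambda$ and the induced maps $\hat\rho_1$, $\hat\rho_2$ on the tensor square. Two facts carry the argument and must be drawn from the earlier work: that the plot description of $\Lambda^1(X_1\cup_f X_2)$ in terms of $\tilde\rho_1^\Lambda$, $\tilde\rho_2^\Lambda$ transfers to the dual and to the tensor square --- which is what lets one test smoothness of the section $g^\Lambda$ through $\hat\rho_1$ and $\hat\rho_2$, circumventing the fact that the images of $X_1$ and $X_2$ in $X_1\cup_f X_2$ need not form an open cover --- and that $\hat\rho_k$ genuinely inverts the pullback along $\tilde\rho_k^\Lambda$, which rests on the fibrewise surjectivity of $\tilde\rho_k^\Lambda$. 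Granting these, the compatibility hypothesis on $g_1^\Lambda$ and $g_2^\Lambda$ supplies everything else, by forcing $g^\Lambda$ to coincide with the two uniform expressions above.
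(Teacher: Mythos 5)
A preliminary remark: the paper itself does not prove Theorem \ref{induced:pseudo-metric:on:glued:lambda:thm}; it is recalled as background from the author's earlier work (\cite{forms-gluing}, \cite{pseudometric-pseudobundle}), so your proposal can only be measured against what the statement requires, not against an in-paper argument. Your fibrewise analysis is essentially right, with one caveat: you justify fibrewise positive definiteness of $g_1^{\Lambda}$ and $g_2^{\Lambda}$ by saying they are ``pseudo-metrics on finite-dimensional pseudo-bundles'', which is false as a general principle --- a pseudo-metric is only required to have rank $\dim((\pi^{-1}(x))^*)$, which can be smaller than the fibre dimension. It holds here because finite-dimensional fibres of $\Lambda^1$ carry the standard diffeology (the lemma in Section 2 of the paper), and that same fact, applied to $X_1\cup_f X_2$, is exactly what you need --- and do not supply --- when you pass from ``positive definite'' to ``of rank $\dim(\Lambda_x^1(X_1\cup_f X_2))^*$'': without knowing the dual of the glued fibre has full dimension, positive definiteness would give the wrong rank.

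The genuine gap is the smoothness step. Your criterion --- that a map into $(\Lambda^1(X_1\cup_f X_2))^*\otimes(\Lambda^1(X_1\cup_f X_2))^*$ is a plot precisely when its composites with $\hat\rho_1$ and $\hat\rho_2$ are smooth wherever defined --- is built on maps that are not in fact ``induced by'' $\tilde\rho_1^{\Lambda}$ and $\tilde\rho_2^{\Lambda}$: dualizing a fibrewise-linear surjection produces a map from $(\Lambda^1(X_k))^*$ into the dual of the glued pseudo-bundle, not the other way around, and a bilinear form on a glued fibre pushes forward along $\tilde\rho_k^{\Lambda}$ only if it vanishes on $\ker\tilde\rho_k^{\Lambda}$. (Over $i_2(f(Y))$ this kernel does turn out to be zero under the hypotheses, but only because compatibility of the pseudo-metrics together with their nondegeneracy forces the compatible direct sum to be a graph --- an argument you would still have to make.) More importantly, even granting a well-defined $\hat\rho_k$, the asserted plot characterization of the dual tensor square is not available off the shelf: the criterion recalled from \cite{forms-gluing} is for $\Lambda^1(X_1\cup_f X_2)$ itself, and transporting it through dualization and tensor product is precisely the substantive content here, since the diffeology of a dual pseudo-bundle is defined through evaluations and does not automatically commute with such descriptions. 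A complete argument must either verify smoothness of $g^{\Lambda}$ directly from the definitions --- lift a plot of $X_1\cup_f X_2$ through $\tilde i_1$ or $i_2$ and check that the resulting map is a plot of the dual tensor square by testing against plots of $\Lambda^1(X_1\cup_f X_2)$, which is where the characterization via $\tilde\rho_1^{\Lambda},\tilde\rho_2^{\Lambda}$, the smoothness of $g_1^{\Lambda},g_2^{\Lambda}$, and the compatibility identity over $i_2(f(Y))$ all enter --- or invoke the gluing--dual and gluing--tensor commutation results of \cite{pseudometric-pseudobundle} in the form they are actually proved. As written, the decisive step of your proof rests on an unproved and, as formulated, ill-posed reduction.
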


\subsubsection{Compatibility of elements of $\Lambda^1(X_1)$ and $\Lambda^1(X_2)$ in terms of pullback maps $i_{\Lambda}^*$ and $j_{\Lambda^*}$}

We will need the following criterion of compatibility of elements of $\Lambda^1(X_1)$ and $\Lambda^1(X_2)$ with the gluing along a given diffeomorphism $f:X_1\supseteq Y\to X_2$. First, there is a 
well-defined pullback map $f_{\Lambda}^*:\Lambda^1(f(Y))\to\Lambda^1(Y)$ induced by the usual pullback map $f^*:\Omega^1(f(Y))\to\Omega^1(Y)$ and defined by setting, for any coset 
$\alpha_2=\omega_2+\Omega_{y'}^1(f(Y))\in\Lambda_{y'}^1(f(Y))$, that 
$$f_{\Lambda}^*(\alpha_2)=(f^{-1}(y'),f^*\omega_2).$$ Let now 
$$\pi_Y^{\Omega,\Lambda}:Y\times\Omega^1(Y)\to\Lambda^1(Y)\,\,\,\mbox{ and }\,\,\,\pi_{f(Y)}^{\Omega,\Lambda}:f(Y)\times\Omega^1(f(Y))\to\Lambda^1(f(Y))$$ be the defining projections of 
$\Lambda^1(Y)$ and $\Lambda^1(f(Y))$ respectively. Then (see \cite{connections}) the map $(i^{-1},i^*):i(Y)\times\Omega^1(X_1)\to Y\times\Omega^1(Y)$ descends to a map 
$i_{\Lambda}^*:\Lambda^1(X_1)\supset(\pi_1^{\Lambda})^{-1}(Y)\to\Lambda^1(Y)$ such that 
$\pi_Y^{\Omega,\Lambda}\circ(i^{-1},i^*)=i_{\Lambda}^*\circ\pi_1^{\Omega,\Lambda}|_{i(Y)\times\Omega^1(X_1)}$, and the map $(j^{-1},j^*):j(f(Y))\times\Omega^1(X_2)\to f(Y)\times\Omega^1(f(Y))$ descends 
to a map $j_{\Lambda}^*:\Lambda^1(X_2)\supset(\pi_2^{\Lambda})^{-1}(f(Y))\to\Lambda^1(f(Y))$ such that
$\pi_{f(Y)}^{\Omega,\Lambda}\circ(j^{-1},j^*)=j_{\Lambda}^*\circ\pi_2^{\Omega,\Lambda}|_{j(f(Y))\times\Omega^1(X_2)}$. Two elements $\alpha_1\in\Lambda_y^1(X_1)$ and 
$\alpha_2\in\Lambda_{f(y)}^1(X_2)$ are compatible if and only if 
$$i_{\Lambda}^*\alpha_1=f_{\Lambda}^*(j_{\Lambda}^*\alpha_2).$$

\subsection{Diffeological connections on diffeological pseudo-bundles}

A certain (preliminary, by the author's own admittance) notion of a connection appears in \cite{iglesiasBook}. Here we describe another approach that appears in \cite{connections} and is aimed to
resemble as much as possible to the standard notion.

\begin{defn}
Let $\pi:V\to X$ be a finite-dimensional diffeological vector pseudo-bundle, and let $C^{\infty}(X,V)$ be the space of its smooth sections. A \textbf{connection} on this pseudo-bundle is a smooth linear 
operator 
$$\nabla:C^{\infty}(X,V)\to C^{\infty}(X,\Lambda^1(X)\otimes V)$$ that satisfies the Leibnitz rule, \emph{i.e.}, for every function $f\in C^{\infty}(X,\matR)$ and for every section $s\in C^{\infty}(X,V)$ we have
$$\nabla(fs)=df\otimes s+f\nabla s.$$ 
\end{defn}

In this case, the differential $df$, which is usually  an element of $\Omega^1(X)$, is considered to be a section of $\Lambda^1(X)$ according to the following rule:
$$df:X\ni x\mapsto\pi^{\Omega,\Lambda}(x,df),$$ where the second symbol of $df$ stands for the diffeological $1$-form given by $df(p)=d(f\circ p)$.

\section{Diffeological Levi-Civita connections: definition and properties}

Let $X$ be a diffeological space such that $(\Lambda^1(X))^*$ admits a pseudo-metric, and let $g^{\Lambda^*}$ be a pseudo-metric on it. As in the standard case, a \textbf{diffeological Levi-Civita connection} 
on $X$ is any connection on $(\Lambda^1(X))^*$ that is compatible with $g^{\Lambda^*}$ and is symmetric, for the appropriately defined notions of symmetricity and of compatibility of a diffeological connection 
with a pseudo-metric.

\subsection{Levi-Civita connections on $(\Lambda^1(X))^*$}

This is the initial notion, although, as we will see shortly, the case of $(\Lambda^1(X))^*$ is equivalent to that of $\Lambda^1(X)$, as long as the latter has only finite-dimensional fibres. We first specify the 
action of a given section of $(\Lambda^1(X))^*$ on any given smooth function $X\to\matR$, with the result being again a smooth function. This allows to define, for any two sections of $(\Lambda^1(X))^*$, 
their Lie-bracket-like action on $C^{\infty}(X,\matR)$.

\paragraph{The action of $C^{\infty}(X,(\Lambda^1(X))^*)$ on $C^{\infty}(X,\matR)$} The action of $t\in C^{\infty}(X,(\Lambda^1(X))^*)$ on $C^{\infty}(X,\matR)$ is given by assigning to any
arbitrary smooth function $f:X\to\matR$ the function $t(f)\in C^{\infty}(X,\matR)$ given by
$$t(f):X\ni x\mapsto(t(x))(\pi^{\Omega,\Lambda}(x,df)),$$
where, recall, $df\in\Omega^1(X)$ is the differential of $f$ and $\pi^{\Omega,\Lambda}:X\times\Omega^1(X)\to\Lambda^1(X)$ is the defining quotient projection of $\Lambda^1(X)$. This is a smooth function $X\to\matR$, 
since it coincides with the pointwise evaluation of $t$ on $\pi^{\Omega,\Lambda}(x,df)$, which is a smooth section of $\Lambda^1(X)$. 

\paragraph{The Lie bracket} Given two sections $t_1,t_2\in C^{\infty}(X,(\Lambda^1(X))^*)$, it is now trivial to define their Lie bracket $[t_1,t_2]\in C^{\infty}(X,(\Lambda^1(X))^*)$; the defining relation fully 
mimics the standard case:
$$[t_1,t_2](s)=t_1(t_2(s))-t_2(t_1(s))\mbox{ for any }s\in C^{\infty}(X,\Lambda^1(X)).$$ The following statement is then obvious.

\begin{lemma}
For any $t_1,t_2\in C^{\infty}(X,(\Lambda^1(X))^*)$ the bracket $[t_1,t_2]\in C^{\infty}(X,(\Lambda^1(X))^*)$ is well-defined.
\end{lemma}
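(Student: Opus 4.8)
The plan is to take the displayed formula as defining $[t_1,t_2]$ through its action on smooth functions (the case $s\in C^\infty(X,\matR)$) and to check, exactly as in the classical situation, that this action is that of a genuine smooth section of $(\Lambda^1(X))^*$. First I would dispatch the elementary points. For a fixed $t\in C^\infty(X,(\Lambda^1(X))^*)$ the rule $f\mapsto t(f)$ is a well-defined operator $C^\infty(X,\matR)\to C^\infty(X,\matR)$, because, as the text already observes, $t(f)$ is the pointwise evaluation of the smooth section $t$ on the smooth section $x\mapsto\pi^{\Omega,\Lambda}(x,df)$ of $\Lambda^1(X)$, and composites of smooth maps are smooth. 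Consequently $t_1\circ t_2$, $t_2\circ t_1$, and hence their difference $[t_1,t_2]$, are $\matR$-linear operators $C^\infty(X,\matR)\to C^\infty(X,\matR)$.

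The substantive step is to show this operator is a derivation, $[t_1,t_2](fg)=f\,[t_1,t_2](g)+g\,[t_1,t_2](f)$, since that is what lets one recover a section of $(\Lambda^1(X))^*$ from it. The argument is the usual one: expanding $t_i(t_j(fg))$ by the Leibnitz rule for the action of $t_i$ and of $t_j$ on functions yields, besides the terms $f\,t_i(t_j(g))$ and $g\,t_i(t_j(f))$, the cross terms $t_j(g)\,t_i(f)+t_j(f)\,t_i(g)$; these are symmetric under $i\leftrightarrow j$ (products of functions commute), so they cancel when one forms $t_1(t_2(fg))-t_2(t_1(fg))$. The only diffeological ingredient needed here is the Leibnitz rule $t(fg)=f\,t(g)+g\,t(f)$ for the action on functions, and this in turn follows from $d(fg)=f\cdot dg+g\cdot df$ at the level of diffeological $1$-forms together with the observation that $f\cdot dg-f(x)\,dg$ is a form vanishing at $x$, so that $\pi^{\Omega,\Lambda}(x,d(fg))=f(x)\,\pi^{\Omega,\Lambda}(x,dg)+g(x)\,\pi^{\Omega,\Lambda}(x,df)$, after which linearity of the functional $t(x)$ finishes it.

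Having the derivation property, I would then construct the section itself: it shows that $[t_1,t_2](f)(x)$ depends on $f$ only through the coset $\pi^{\Omega,\Lambda}(x,df)\in\Lambda_x^1(X)$, and linearly in it, which pins down a linear functional $[t_1,t_2](x)$ on (the span of such cosets inside) $\Lambda_x^1(X)$; smoothness of $x\mapsto[t_1,t_2](x)$ is then inherited from smoothness of the operator together with the defining diffeology of $(\Lambda^1(X))^*$, for which evaluation is smooth. I expect this last step to be the only real friction: a priori the cosets of exact forms need not span all of $\Lambda_x^1(X)$, so obtaining a section of the full pseudo-bundle $(\Lambda^1(X))^*$ requires either passing to the sub-pseudo-bundle those cosets generate or invoking the identification of such operators with sections already used in \cite{connections}. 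Uniqueness of $[t_1,t_2]$ as a section then follows on whatever domain the exact cosets span. Everything else is the classical bookkeeping carried over verbatim, which is why the statement is essentially immediate.
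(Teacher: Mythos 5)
The paper offers no argument here (the lemma is introduced with ``the following statement is then obvious''), so your sketch is in effect an attempt to supply a proof, and its elementary half is fine: $d(fg)-f(x)\,dg-g(x)\,df$ vanishes at $x$, so each $t_i$ acts on $C^{\infty}(X,\matR)$ as a derivation and the commutator is again a derivation. The genuine gap is the step you pass over in one line: you claim the derivation property ``shows that $[t_1,t_2](f)(x)$ depends on $f$ only through the coset $\pi^{\Omega,\Lambda}(x,df)\in\Lambda_x^1(X)$''. It does not. What has to be proved is that $t_1(t_2(f))(x)-t_2(t_1(f))(x)=0$ whenever $df\in\Omega_x^1(X)$; this is the diffeological form of the assertion that a commutator of first-order operators is again first-order. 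The classical proof that a point derivation of $C^{\infty}(M)$ sees only $df_x$ rests on locality (bump functions, Hadamard's lemma), or equivalently on the coordinate cancellation of the symmetric second-derivative terms, and neither tool is available on a general diffeological space. Concretely, $t_1(t_2(f))(x)=t_1(x)\bigl(\pi^{\Omega,\Lambda}(x,d(t_2(f)))\bigr)$ depends on how the class of $df$ varies along plots through $x$, not merely on its value $\pi^{\Omega,\Lambda}(x,df)$, and your proposal supplies no mechanism for the required cancellation in the difference.

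Even granting that point, what remains is not bookkeeping: as you yourself note, the exact cosets $\pi^{\Omega,\Lambda}(x,df)$ need not span $\Lambda_x^1(X)$, so at best you obtain a functional on a subspace of each fibre, and the smoothness of $x\mapsto[t_1,t_2](x)$ for the dual pseudo-bundle diffeology still has to be established; deferring both issues to ``the identification used in \cite{connections}'' leaves precisely the content of the lemma unproved. Note also that your reading of the defining relation differs from the displayed one, which takes $s\in C^{\infty}(X,\Lambda^1(X))$: if $t_2(s)$ there means pointwise evaluation, well-definedness would instead require $s\mapsto t_1(t_2(s))-t_2(t_1(s))$ to be $C^{\infty}(X,\matR)$-linear in $s$, and your own Leibnitz computation, applied to $hs$, produces the non-tensorial terms $t_2(s)\,t_1(h)-t_1(s)\,t_2(h)$. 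So under either reading the well-definedness is a substantive claim, and your proposal does not close it.
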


The operation $[,]$ has, of course, the same bilinearity and antisymmetricity properties, and satisfies the Jacobi identity.

\paragraph{The torsion tensor and symmetricity} Let us now consider the \textbf{torsion} of a diffeological connection $\nabla$ on $(\Lambda^1(X))^*$. We should first mention the notion of a \textbf{covariant 
derivative}, which is in fact a common one for diffeological connections, is always taken with respect to sections of $(\Lambda^1(X))^*$ and has the usual action (see \cite{connections}). The \textbf{torsion 
tensor} of a connection $\nabla^*$ on $(\Lambda^1(X))^*$ is then defined by the standard formula: for any two sections $t_1,t_2\in(\Lambda^1(X))^*$ we set
$$T(t_1,t_2):=\nabla^*_{t_1}t_2-\nabla^*_{t_2}t_1-[t_1,t_2].$$ 
Accordingly, a diffeological connection $\nabla^*$ on $(\Lambda^1(X))^*$ is said to be \textbf{symmetric} if its torsion is zero:
$$\nabla^*_{t_1}t_2-\nabla^*_{t_2}t_1=[t_1,t_2]\mbox{ for all }t_1,t_2\in C^{\infty}(X,(\Lambda^1(X))^*).$$ 

\paragraph{Compatibility with a pseudo-metric} This notion is defined analogously to that of a usual connection compatible with a given Riemannian metric and applies to any diffeological connection on a 
pseudo-bundle that admits a pseudo-metric (see \cite{connections}). We state this definition in the case that interests us here, that of connections and pseudo-metrics on $(\Lambda^1(X))^*$. 

\begin{defn}
Let $X$ be a diffeological space, let $\nabla^*$ be a diffeological connection on $(\Lambda^1(X))^*$, and let $g^{\Lambda^*}$ be a pseudo-metric on it. The connection $\nabla^*$ is said to be 
\textbf{compatible with $g^{\Lambda^*}$} if for any two sections $t_1,t_2\in C^{\infty}(X,(\Lambda^1(X))^*)$ we have 
$$d(g^{\Lambda^*}(t_1,t_2))=g^{\Lambda^*}(\nabla^*t_1,t_2)+g^{\Lambda^*}(t_1,\nabla^*t_2),$$ where the differential on the left is of course a section of $\Lambda^1(X)$ (rather than a differential form in 
$\Omega^1(X)$), and on the right the pseudo-metric $g^{\Lambda^*}$ is extended in the standard way to include the sections of $\Lambda^1(X)\otimes(\Lambda^1(X))^*$, that is, by setting 
$g^{\Lambda^*}(s\otimes t_1,t_2)=g^{\Lambda^*}(t_1,s\otimes t_2)=s\cdot g^{\Lambda^*}(t_1,t_2)$ for any section $s\in C^{\infty}(X,\Lambda^1(X))$.  
\end{defn}

Any connection on $((\Lambda^1(X))^*,g^{\Lambda^*})$, where $g^{\Lambda^*}$ is a pseudo-metric, that is symmetric and compatible with $g^{\Lambda^*}$ is called a \textbf{diffeological Levi-Civita 
connection on $X$}.

\subsection{Levi-Civita connections on $\Lambda^1(X)$}

We now consider the case of the pseudo-bundle $\Lambda^1(X)$. It turns out that, if $\Lambda^1(X)$ has only finite-dimensional fibres, then this case is fully equivalent to that of $(\Lambda^1(X))^*$.

\subsubsection{The pseudo-bundles $\Lambda^1(X)$ and $(\Lambda^1(X))^*$ relative to each other} 

Let $\Phi_{g^{\Lambda}}:\Lambda^1(X)\to(\Lambda^1(X))^*$ be the pairing map corresponding to the pseudo-metric $g^{\Lambda}$. Recall that it is defined by
$$\Phi_{g^{\Lambda}}(\alpha)(\cdot)=g^{\Lambda}(\pi^{\Lambda}(\alpha))(\alpha,\cdot)$$ for any $\alpha\in\Lambda^1(X)$. We make the following, more general, observation first.

\begin{lemma}
Let $\pi:V\to X$ be a finite-dimensional diffeological vector pseudo-bundle that admits a pseudo-metric $g$. Then the corresponding pairing map $\Phi_g$ is a subduction onto $V^*$.
\end{lemma}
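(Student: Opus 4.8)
The plan is to reduce the statement, by means of the dual pseudo-metric $g^*$, to the analogous statement for the canonical evaluation map $V\to V^{**}$. Recall that $\Phi_g$ is already known to be smooth, fibrewise linear, and fibrewise surjective, hence surjective onto $V^*$; so the only thing to be shown is that every plot of $V^*$ lifts locally to a plot of $V$ along $\Phi_g$. Consider the map $g^*:X\to V^{**}\otimes V^{**}$ associated to $g$ and its pairing map $\Phi_{g^*}:V^*\to V^{**}$, given by $\Phi_{g^*}(\xi)=g^*(\pi^*(\xi))(\xi,\cdot)$. Since each $g^*(x)$ is a pseudo-metric on the fibre $(\pi^{-1}(x))^*$, it has rank $\dim((\pi^{-1}(x))^{**})=\dim((\pi^{-1}(x))^*)$ (duals of finite-dimensional diffeological vector spaces being standard), hence is non-degenerate; so $\Phi_{g^*}$ is injective on each fibre, and, being a pseudo-bundle morphism over $\mathrm{id}_X$, it is injective.

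The key point is the identity $\Phi_{g^*}\circ\Phi_g=\iota_V$, where $\iota_V:V\to V^{**}$ is the canonical map $v\mapsto(\xi\mapsto\xi(v))$. Indeed, for $v,w$ in the same fibre $\pi^{-1}(x)$ the defining relation of $g^*$ gives $\Phi_{g^*}(\Phi_g(v))(\Phi_g(w))=g^*(x)(\Phi_g(v),\Phi_g(w))=g(x)(v,w)=\Phi_g(w)(v)=\iota_V(v)(\Phi_g(w))$, and since $\Phi_g$ is fibrewise surjective this pins down $\Phi_{g^*}(\Phi_g(v))=\iota_V(v)$. Granting for the moment that $\iota_V$ is a subduction, the lemma follows: given a plot $q:U\to V^*$, the composite $\Phi_{g^*}\circ q$ is a plot of $V^{**}$, so it lifts locally to a plot $p:U'\to V$ with $\iota_V\circ p=\Phi_{g^*}\circ q$ on $U'$, that is $\Phi_{g^*}\circ(\Phi_g\circ p)=\Phi_{g^*}\circ q$; since $p$ and $q$ then take values in fibres over the same points of $X$ and $\Phi_{g^*}$ is fibrewise injective, we conclude $\Phi_g\circ p=q$ on $U'$, i.e. $q$ lifts along $\Phi_g$.

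It remains to prove that $\iota_V:V\to V^{**}$ is a subduction, and this is where the real work lies (one expects either to cite this as a structural fact about finite-dimensional diffeological vector pseudo-bundles or to establish it directly). The natural route is to pull everything back along the base plot: a plot of $V^{**}$ lives over some plot $r:U\to X$, and lifting it along $\iota_V$ is equivalent to lifting the corresponding smooth section of $r^*(V^{**})\cong(r^*V)^{**}$ over the domain $U$ to a smooth section of $r^*V$, so one may assume the base is a domain. Fibrewise, $\iota_{V_x}$ factors as the quotient projection $V_x\to V_x/R_x$ followed by a linear isomorphism onto $V_x^{**}$, where $R_x=\{v:\xi(v)=0\ \forall\xi\in V_x^*\}$ and one invokes the structure theorem for finite-dimensional diffeological vector spaces ($V_x\cong F_x\oplus R_x$ with $F_x$ standard and $R_x^*=0$). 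The obstacle is that this fibrewise splitting is in general not available smoothly across $X$ — the dimension of $R_x$ may jump — so one cannot lift fibrewise; instead one must argue at the level of plots, using the explicit description of the diffeology of $V^{**}$ (plots being those maps whose projection to $X$ is a plot and which pair smoothly with all plots of $V^*$ over the same base plot) together with the standardness of the fibres of $V^*$ to build the lift over the domain base. Controlling this construction over the locus where $\dim R_x$ drops is the main difficulty; everything else is formal.
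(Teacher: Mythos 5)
Your reduction to the canonical map $\iota_V:V\to V^{**}$ is where the proposal breaks down: the statement that $\iota_V$ is a subduction is never proved, is not available as a citation in this framework, and is at least as hard as the lemma itself. You acknowledge this explicitly (``this is where the real work lies''), and the route you sketch is left unresolved at exactly the critical point (the identification $r^*(V^{**})\cong(r^*V)^{**}$ as pseudo-bundles, and the locus where $\dim R_x$ jumps, are asserted rather than handled). Worse, the natural way to prove that $\iota_V=\Phi_{g^*}\circ\Phi_g$ is a subduction is to show that $\Phi_g$ and $\Phi_{g^*}$ are subductions --- i.e.\ to invoke the very lemma you are proving (for $g$ and then for $g^*$), so the reduction is circular in spirit. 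There is a second, hidden instance of the same circularity: you use $\Phi_{g^*}$ as a smooth map in order to say that $\Phi_{g^*}\circ q$ is a plot of $V^{**}$, but verifying that $g^*$ is smooth on $V^*$ (equivalently, that evaluations of $\Phi_{g^*}$ against arbitrary plots of $V^*$ are smooth) already requires knowing how arbitrary plots of $V^*$ are built from plots of $V$ --- the defining relation $g^*(x)(\Phi_g(v),\Phi_g(w))=g(x)(v,w)$ only controls $g^*$ on points in the image of $\Phi_g$, and extending this control to plots is precisely the subductivity of $\Phi_g$. The fibrewise parts of your argument (non-degeneracy of $g^*(x)$, the identity $\Phi_{g^*}\circ\Phi_g=\iota_V$, the cancellation step using fibrewise injectivity) are fine, but they only transfer the difficulty; they do not discharge it.

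By contrast, the paper's proof is direct and avoids $V^{**}$ altogether: the dual diffeology on $V^*$ is characterized by the requirement that evaluations against plots of $V$ be smooth, and the pushforward of the diffeology of $V$ by $\Phi_g$ already satisfies this, since a plot of the pushforward locally has the form $u\mapsto g(\pi(p(u)))(p(u),\cdot)$ and its evaluation against any plot $q$ of $V$ is $(u,u')\mapsto g(\pi(p(u)))(p(u),q(u'))$, smooth by the smoothness of the pseudo-metric. The extremal property defining the dual diffeology then forces the pushforward to coincide with it, which (together with fibrewise surjectivity) is exactly the subduction statement. If you want to salvage your approach, you would need an independent proof that $\iota_V$ is a subduction for finite-dimensional pseudo-bundles admitting a pseudo-metric, and at that point you would in effect be reproving the lemma by the paper's argument anyway.
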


\begin{proof}
The diffeology of any dual pseudo-bundle $V^*$ is defined as the finest diffeology such that all the evaluation functions $V^*\times V\to\matR$ are smooth. Let $\calD'$ be the pushforward of the diffeology 
of $V$ by the pairing map $\Phi_g$. Any plot of $\calD'$ locally has form $U\ni u\mapsto g(\pi(p(u)))(p(u),\cdot)\in V^*$ for some plot $p$ of $V$. Let $q:U'\to V$ be any other plot such that the set 
$\{(u,u')\,|\,\pi(p(u))=\pi(q(u'))\}$ is non-empty. The corresponding evaluation function, defined on this set, has form $(u,u')\mapsto g(p(u))(p(u),q(u'))$, and this is smooth by the definition of a pseudo-metric. 
This precisely implies that $\calD'$ coincides with the dual pseudo-bundle diffeology.
\end{proof}

We now turn to the implications of $\Lambda^1(X)$ having only finite-dimensional fibres.

\begin{lemma}
Let $X$ be such that all fibres of $\Lambda^1(X)$ have finite dimension. Then the subset diffeology on each fibre $\Lambda_x^1(X)$ is standard.
\end{lemma}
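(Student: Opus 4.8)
The plan is to prove that the subset diffeology on a fibre $\Lambda_x^1(X)$ coincides with the standard diffeology of the finite-dimensional vector space $\Lambda_x^1(X)$, by showing each one contains the other.

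That the standard diffeology is contained in the subset one is the easy half. The fibrewise addition $V\times_X V\to V$ and the scalar multiplication $\matR\times V\to V$ of the pseudo-bundle $V=\Lambda^1(X)$ restrict to smooth maps on $\Lambda_x^1(X)$ for the respective subset diffeologies (using that the subset diffeology of a product is the product of the subset diffeologies). Hence the subset diffeology on $\Lambda_x^1(X)$ is a vector space diffeology, so it contains the fine diffeology of this vector space; and the fine diffeology of a finite-dimensional real vector space is precisely the standard one. Therefore every smooth map into $\Lambda_x^1(X)$ is a plot of the subset diffeology.

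For the other inclusion we must show that every plot of the subset diffeology is smooth as a map into $\matR^n$, where $n=\dim\Lambda_x^1(X)$. Let $p:U\to\Lambda^1(X)$ be a plot with $p(U)\subseteq\Lambda_x^1(X)$. Since $\pi^{\Omega,\Lambda}:X\times\Omega^1(X)\to\Lambda^1(X)$ is the defining quotient projection, hence a subduction, $p$ lifts locally to a plot $u\mapsto(\gamma(u),\omega_u)$ of $X\times\Omega^1(X)$; as $\pi^{\Omega,\Lambda}$ covers the identity of $X$ and $\pi^{\Lambda}(p(u))=x$, we must have $\gamma(u)\equiv x$, so locally $p(u)=\omega_u+\Omega_x^1(X)$ with $u\mapsto\omega_u$ a plot of $\Omega^1(X)$ for the functional diffeology. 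Now, for any plot $q:U'\to X$ with $0\in U'$ and $q(0)=x$, the map $u\mapsto\omega_u(q)(0)\in\Lambda_0^1(U')$ is smooth (this is what the functional diffeology on $\Omega^1(X)$ guarantees, composed with evaluation at $0\in U'$), and its value depends only on the coset $\omega_u+\Omega_x^1(X)$ precisely because forms in $\Omega_x^1(X)$ vanish at $x$. By the very definition of vanishing at $x$, the linear maps $\omega+\Omega_x^1(X)\mapsto\omega(q)(0)$, as $q$ ranges over all plots centered at $x$, are jointly injective on $\Lambda_x^1(X)$; by finite-dimensionality, finitely many of them, say for plots $q_1:U_1'\to X,\dots,q_m:U_m'\to X$, already assemble into an injective linear map $\iota:\Lambda_x^1(X)\to\bigoplus_{k=1}^m\Lambda_0^1(U_k')\cong\matR^N$. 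Fixing a linear left inverse $\varrho:\matR^N\to\matR^n$ of $\iota$, we get $p(u)=\varrho\bigl(\omega_u(q_1)(0),\dots,\omega_u(q_m)(0)\bigr)$, which is smooth as a map $U\to\matR^n$. Hence every plot of the subset diffeology is smooth, and combined with the previous paragraph the subset diffeology equals the standard one.

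The main obstacle is the assertion that $u\mapsto\omega_u(q)(0)$ is smooth when $u\mapsto\omega_u$ is a plot of $\Omega^1(X)$: one has to unwind the precise definition of the functional diffeology on $\Omega^1(X)$ (as in \cite{iglesiasBook}) to see that pairing with a fixed plot $q$ of $X$ and then evaluating at the base point of $U'$ is a smooth operation. Everything else is routine --- the local lifting property of quotient (subduction) maps, the fact that the lift has constant $X$-component since $\pi^{\Omega,\Lambda}$ covers the identity, the observation that the evaluations $\omega\mapsto\omega(q)(0)$ descend to the cosets and separate them, and the elementary reduction of a separating family of linear maps to a finite injective one in the finite-dimensional setting.
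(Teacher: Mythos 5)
Your proof is correct and follows essentially the same route as the paper's: lift the plot through the quotient projection to a plot of $\Omega^1(X)$ with constant first component, evaluate on plots of $X$ centered at $x$, and use the functional diffeology of $\Omega^1(X)$ to conclude that the coordinate functions are ordinary smooth functions. You are merely more explicit on two points the paper leaves implicit --- the finite separating family of evaluations $\omega\mapsto\omega(q)(0)$ that lets one solve linearly for the coefficients, and the easy reverse inclusion (standard $\subseteq$ subset) via the fine diffeology of a finite-dimensional vector space.
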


\begin{proof}
Let $x\in X$ be a point, and let $\omega_1+\Omega_x^1(X),\ldots,\omega_n+\Omega_x^1(X)$ be its basis. Let $q:U\to\Lambda_x^1(X)$ be a plot for the subset diffeology on the fibre of $\Lambda^1(X)$ at 
$x$. We can then write in the form $q(u)=q_1(u)(\omega_1+\Omega_x^1(X))+\ldots+q_n(u)(\omega_n+\Omega_x^1(X))$, where $q_i$ are some functions $U\to\matR$. The claim is then equivalent to each 
 $q_i$ being an ordinary smooth function.

Assume that $U$ is small so that $q=\pi^{\Omega,\Lambda}\circ p$ for some plot $p$ of $\Omega^1(X)$. We then have for all $u\in U$ that $p(u)=q_1(u)\omega_1+\ldots+q_n(u)\omega_n+\omega'(u)$, 
where $\omega'(u)\in\Omega_x^1(X)$. Let $p':U'\to X$ be a plot centered at $x$; we then have that 
$$(p(u))(p')(0)=q_1(u)\omega_1(p')(0)+\ldots+q_n(u)\omega_n(p')(0),$$ which is a restriction of the evaluation function for $p$ and $p'$ defined on $U\times U'$, to its smaller subset 
$U\times\{0\}\subset U\times U'$. Furthermore, by definition of the functional diffeology on $\Omega^1(X)$, this evaluation is an ordinary smooth section of the bundle of differential forms over $U\times U'$. 
It then follows that the coefficients of its restriction are ordinary smooth functions, as wanted.
\end{proof}

Recall that if all fibres of $\Lambda^1(X)$ are standard then any pseudo-metric on it is a fibrewise scalar product, and the corresponding pairing map is a diffeomorphism on each fibre. We have the following 
statement.

\begin{cor}\label{lambda:and:dual:lambda:are:diffeomorphic:cor}
Let $X$ be a diffeological space such that $\Lambda^1(X)$ has only finite-dimensional fibres and admits a pseudo-metric $g^{\Lambda}$. Then the pseudo-bundles $\Lambda^1(X)$ and $(\Lambda^1(X))^*$ 
are diffeomorphic.
\end{cor}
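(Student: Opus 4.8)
The plan is to show that the pairing map $\Phi_{g^{\Lambda}}\colon\Lambda^1(X)\to(\Lambda^1(X))^*$ is itself the sought diffeomorphism, so that no separate construction of an inverse is needed. By the lemma preceding this statement, $\Phi_{g^{\Lambda}}$ is a subduction onto $(\Lambda^1(X))^*$; it is also smooth, fibrewise linear, and, as recalled in Section 1, surjective, and it preserves fibres in the sense that it maps $\Lambda_x^1(X)$ into $(\Lambda_x^1(X))^*$ for every $x\in X$. Hence, once I check that $\Phi_{g^{\Lambda}}$ is injective, it will be a bijective subduction, and it will remain only to observe that a bijective subduction is a diffeomorphism.

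For injectivity I would invoke the lemma just proved: since all fibres of $\Lambda^1(X)$ are finite-dimensional, the subset diffeology on each $\Lambda_x^1(X)$ is the standard one, so, as recalled immediately before the statement, the bilinear form $g^{\Lambda}(x)$ is an honest (positive-definite) scalar product. Therefore $\Lambda_x^1(X)$ contains no nonzero isotropic vectors, and the kernel of $\Phi_{g^{\Lambda}}|_{\Lambda_x^1(X)}$, which consists precisely of such vectors, is trivial. Thus $\Phi_{g^{\Lambda}}$ is injective on every fibre; combined with the fact that it preserves fibres this yields global injectivity, and together with the surjectivity recalled above, $\Phi_{g^{\Lambda}}$ is a bijection (a dimension count on each finite-dimensional fibre shows in addition that it restricts to a linear isomorphism fibrewise).

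Finally I would note that a bijective subduction has a smooth inverse. Indeed, if $q\colon U\to(\Lambda^1(X))^*$ is a plot, then, $\Phi_{g^{\Lambda}}$ being a subduction, near any point of $U$ one has $q=\Phi_{g^{\Lambda}}\circ p$ for some plot $p$ of $\Lambda^1(X)$, whence $\Phi_{g^{\Lambda}}^{-1}\circ q=p$ there; by the sheaf condition $\Phi_{g^{\Lambda}}^{-1}\circ q$ is a plot, so $\Phi_{g^{\Lambda}}^{-1}$ is smooth. Since both $\Phi_{g^{\Lambda}}$ and $\Phi_{g^{\Lambda}}^{-1}$ are fibrewise linear and commute with the bundle projections, this is an isomorphism of diffeological vector pseudo-bundles. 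The argument is essentially bookkeeping: the only points requiring care are the passage from the fibrewise injectivity and surjectivity supplied by the two lemmas to a genuine global bijection, and the (standard, but worth stating explicitly) fact that a bijective subduction is automatically a diffeomorphism.
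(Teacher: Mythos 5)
Your proposal is correct and follows essentially the same route as the paper: the paper's proof simply declares $\Phi_{g^{\Lambda}}$ to be the desired diffeomorphism, relying on the preceding subduction lemma and the standardness of the fibres (which makes the pseudo-metric a fibrewise scalar product and the pairing map fibrewise invertible), exactly the ingredients you assemble. Your write-up merely makes explicit the bookkeeping (fibrewise injectivity from positive-definiteness, global bijectivity, and the fact that a bijective subduction has smooth inverse) that the paper leaves implicit.
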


\begin{proof}
The desired diffeomorphism is given precisely by the map $\Phi_{g^{\Lambda}}$.
\end{proof}

\subsubsection{Defining Levi-Civita connections on $\Lambda^1(X)$} 

Let $X$ be such that $\Lambda^1(X)$ admits pseudo-metrics, and let $g^{\Lambda}$ be a pseudo-metric on it. As we have seen in the previous 
section, the pairing map $\Phi_{g^{\Lambda}}$ is a diffeomorphism $\Lambda^1(X)\to(\Lambda^1(X))^*$. This allows us to define first covariant derivatives along sections of $\Lambda^1(X)$ and then the 
symmetricity of a connection on $\Lambda^1(X)$. 

Let $\nabla$ be a connection on $\Lambda^1(X)$, and let $s\in C^{\infty}(X,\Lambda^1(X))$ be a section. There is a usual notion of a covariant derivative, of any diffeological connection, along a section of 
$(\Lambda^1(X))^*$. This makes the following definition obvious.

\begin{defn}
The \textbf{covariant derivative of $\nabla$ along $s$} is its covariant derivative along $\Phi_{g^{\Lambda}}\circ s$,
$$\nabla_s=\nabla_{(\Phi_{g^{\Lambda}})\circ s}.$$
\end{defn}

Let $s_1,s_2\in C^{\infty}(X,\Lambda^1(X))$. Their \textbf{Lie bracket} $[s_1,s_2]\in C^{\infty}(X,\Lambda^1(X))$ is defined by setting
$$[s_1,s_2]=\Phi_{g^{\Lambda}}^{-1}\circ[\Phi_{g^{\Lambda}}\circ s_1,\Phi_{g^{\Lambda}}\circ s_2],$$ where on the right we have the already-defined Lie bracket of the two sections of $(\Lambda^1(X))^*$.

\begin{defn}
A connection $\nabla$ on $\Lambda^1(X)$ is called \textbf{symmetric} if for any two sections $s_1,s_2\in C^{\infty}(X,\Lambda^1(X))$ we have
$$\nabla_{s_1}s_2-\nabla_{s_2}s_1=[s_1,s_2].$$ A symmetric connection $\nabla$ on $\Lambda^1(X)$ equipped with a pseudo-metric $g^{\Lambda}$ is called a \textbf{Levi-Civita connection} if it is 
compatible with $g^{\Lambda}$.
\end{defn}

\subsection{Uniqueness of Levi-Civita connections} 

The uniqueness property for Levi-Civita connections on diffeological spaces $X$ (with fibrewise finite-dimensional $\Lambda^1(X)$) holds just as it does for (finite-dimensional) smooth manifolds; in fact, it is 
established by exactly the same reasoning.

\begin{thm}\label{symm:comp:connection:is:unique:thm}
Let $X$ be a diffeological space such that $\Lambda^1(X)$ has finite-dimensional fibres only. Suppose furthermore that its dual pseudo-bundle $(\Lambda^1(X))^*$ is endowed with
a pseudo-metric $g^{\Lambda^*}$. Then there exists at most one connection $\nabla$ on $(\Lambda^1(X))^*$ that is symmetric and compatible with $g^{\Lambda^*}$.
\end{thm}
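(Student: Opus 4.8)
The plan is to...The plan is to follow the classical Koszul-formula argument verbatim, checking that every step goes through in the diffeological setting thanks to the finite-dimensionality of the fibres of $\Lambda^1(X)$. Suppose $\nabla$ is symmetric and compatible with $g^{\Lambda^*}$. First I would write down the three cyclic permutations of the compatibility identity applied to sections $t_1,t_2,t_3\in C^{\infty}(X,(\Lambda^1(X))^*)$:
$$t_1(g^{\Lambda^*}(t_2,t_3))=g^{\Lambda^*}(\nabla_{t_1}t_2,t_3)+g^{\Lambda^*}(t_2,\nabla_{t_1}t_3),$$
and its two analogues obtained by cycling $(t_1,t_2,t_3)$. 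Here I use the action of sections of $(\Lambda^1(X))^*$ on $C^{\infty}(X,\matR)$ defined earlier, noting that contracting the $\Lambda^1(X)$-part of a section of $\Lambda^1(X)\otimes(\Lambda^1(X))^*$ against $t_1$ recovers exactly $t_1(g^{\Lambda^*}(t_2,t_3))$, since $d\varphi$ evaluated through $t_1$ is by definition $t_1(\varphi)$ for $\varphi\in C^{\infty}(X,\matR)$.

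Next I would take the alternating sum of these three identities (plus, minus, minus in the usual pattern) and use symmetricity, $\nabla_{t_i}t_j-\nabla_{t_j}t_i=[t_i,t_j]$, to cancel all but one occurrence of the covariant derivatives. This yields the Koszul formula
$$2\,g^{\Lambda^*}(\nabla_{t_1}t_2,t_3)=t_1(g^{\Lambda^*}(t_2,t_3))+t_2(g^{\Lambda^*}(t_1,t_3))-t_3(g^{\Lambda^*}(t_1,t_2))+g^{\Lambda^*}([t_1,t_2],t_3)-g^{\Lambda^*}([t_1,t_3],t_2)-g^{\Lambda^*}([t_2,t_3],t_1),$$
whose right-hand side depends only on $g^{\Lambda^*}$, the bracket, and the action on functions — not on $\nabla$. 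Hence $g^{\Lambda^*}(\nabla_{t_1}t_2,t_3)$ is determined for all $t_3$, and so is $g^{\Lambda^*}(\nabla^{(1)}t_2-\nabla^{(2)}t_2,t_3)$ for any two such connections. To conclude $\nabla^{(1)}=\nabla^{(2)}$ I would invoke nondegeneracy: by Corollary \ref{lambda:and:dual:lambda:are:diffeomorphic:cor} (applied to the pseudo-metric $g^{\Lambda^*}$ on the fibrewise-finite-dimensional pseudo-bundle $(\Lambda^1(X))^*$, whose fibres are standard by the lemma preceding it) the pairing map $\Phi_{g^{\Lambda^*}}$ is a fibrewise isomorphism, so $g^{\Lambda^*}(\xi,t_3)=0$ for all sections $t_3$ forces $\xi=0$ pointwise.

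The main obstacle, and the point needing genuine care, is the passage from "the value of $\nabla_{t_1}t_2$ is pinned down when paired against every \emph{section} $t_3$" to "it is pinned down pointwise". In the smooth-manifold case one uses bump functions and the fact that tangent vectors at a point are spanned by values of vector fields; here I must be sure that at each $x\in X$ the values $t_3(x)$, as $t_3$ ranges over $C^{\infty}(X,(\Lambda^1(X))^*)$, span the whole fibre $((\Lambda^1(X))^*)_x$ — equivalently that the evaluation-at-$x$ map on global sections is surjective. This should follow because $\Phi_{g^{\Lambda^*}}$ is a diffeomorphism, transporting the question to $\Lambda^1(X)$ where the constant-in-$\Omega^1(X)$ sections $x\mapsto\pi^{\Omega,\Lambda}(x,\omega)$ already surject onto each fibre $\Lambda^1_x(X)=\Omega^1(X)/\Omega^1_x(X)$. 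I would also double-check that the bracket $[t_1,t_2]$ lands in $C^{\infty}(X,(\Lambda^1(X))^*)$ (the preceding lemma) and that all the terms on the Koszul right-hand side are legitimate smooth functions, so that the displayed equality is an identity of sections of $\Lambda^1(X)$ as intended. Everything else is the same formal manipulation as in the classical proof and requires no new input beyond the finite-dimensionality hypothesis.
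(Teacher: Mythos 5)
Your proposal is correct and follows essentially the same route as the paper: the three cyclic compatibility identities, the alternating sum combined with symmetricity to obtain the Koszul formula, and the non-degeneracy of the pseudo-metric $g^{\Lambda^*}$ to pin down $\nabla_{t_1}t_2$. Your additional care about passing from determination against all sections $t_3$ to pointwise determination (via surjectivity of evaluation, transported through $\Phi_{g^{\Lambda^*}}$ to the sections $x\mapsto\pi^{\Omega,\Lambda}(x,\omega)$ of $\Lambda^1(X)$) is a legitimate refinement of a step the paper leaves implicit, not a different argument.
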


\begin{proof}
Suppose that there exists a connection $\nabla$ on $(\Lambda^1(X))^*$ that possesses the two properties indicated; let us show that it is unique. Let
$t_1,t_2,t_3\in C^{\infty}(X,(\Lambda^1(X))^*)$. By the assumption of compatibility of $\nabla$ with $g^{\Lambda^*}$ we have
$$d(g^{\Lambda^*}(t_2,t_3))=g^{\Lambda^*}(\nabla t_2,t_3)+g^{\Lambda^*}(t_2,\nabla t_3)\Rightarrow
t_1(g^{\Lambda^*}(t_2,t_3))=t_1(g^{\Lambda^*}(\nabla t_2,t_3))+t_1(g^{\Lambda^*}(t_2,\nabla t_3))\Rightarrow$$
$$\Rightarrow t_1(g^{\Lambda^*}(t_2,t_3))=g^{\Lambda^*}(\nabla_{t_1} t_2,t_3)+g^{\Lambda^*}(t_2,\nabla_{t_1} t_3),$$ where the left-hand term $t_1(g^{\Lambda^*}(t_2,t_3))$ stands for
the action of $t_1\in C^{\infty}(X,(\Lambda^1(X))^*)$. We then obtain the same equalities for $t_2,t_3,t_1$ and for $t_3,t_1,t_2$. Altogether, we have
$$\left\{\begin{array}{l}
t_1(g^{\Lambda^*}(t_2,t_3))=g^{\Lambda^*}(\nabla_{t_1} t_2,t_3)+g^{\Lambda^*}(t_2,\nabla_{t_1} t_3) \\
t_2(g^{\Lambda^*}(t_3,t_1))=g^{\Lambda^*}(\nabla_{t_2} t_3,t_1)+g^{\Lambda^*}(t_3,\nabla_{t_2} t_1) \\
t_3(g^{\Lambda^*}(t_1,t_2))=g^{\Lambda^*}(\nabla_{t_3} t_1,t_2)+g^{\Lambda^*}(t_1,\nabla_{t_3} t_2),
\end{array}\right.$$
from which by the standard calculation we obtain
\begin{flushleft}
$t_1(g^{\Lambda^*}(t_2,t_3))+t_2(g^{\Lambda^*}(t_3,t_1))-t_3(g^{\Lambda^*}(t_1,t_2))=$
\end{flushleft}
$$=g^{\Lambda^*}(\nabla_{t_1} t_2,t_3)+g^{\Lambda^*}(t_2,\nabla_{t_1} t_3)-g^{\Lambda^*}(\nabla_{t_3} t_1,t_2)-$$
$$-g^{\Lambda^*}(t_1,\nabla_{t_3} t_2)+g^{\Lambda^*}(\nabla_{t_2} t_3,t_1)+g^{\Lambda^*}(t_3,\nabla_{t_2} t_1)=$$
\begin{flushright}
$=g^{\Lambda^*}(\nabla_{t_1} t_2+\nabla_{t_2}t_1,t_3)+g^{\Lambda^*}(\nabla_{t_1}t_3-\nabla_{t_3}t_1,t_2)+g^{\Lambda^*}(\nabla_{t_2}t_3-\nabla_{t_3}t_2,t_1)$.
\end{flushright}
By the standard reasoning, the symmetricity assumption on $\nabla$ implies that
$$\nabla_{t_1}t_3-\nabla_{t_3}t_1=[t_1,t_3],\,\,\,\nabla_{t_2}t_3-\nabla_{t_3}t_2=[t_2,t_3],\,\,\,\mbox{ and }\,\,\,\nabla_{t_1} t_2+\nabla_{t_2}t_1=[t_1,t_2]+2\nabla_{t_2}t_1=[t_2,t_1]+2\nabla_{t_1}t_2.$$
Therefore we obtain
\begin{flushleft}
$t_1(g^{\Lambda^*}(t_2,t_3))+t_2(g^{\Lambda^*}(t_3,t_1))-t_3(g^{\Lambda^*}(t_1,t_2))+g^{\Lambda^*}([t_1,t_2],t_3)=$
\end{flushleft}
\begin{flushright}
$=g^{\Lambda^*}([t_1,t_3],t_2)+g^{\Lambda^*}([t_2,t_3],t_1)-g^{\Lambda^*}([t_1,t_2],t_3)+2g^{\Lambda^*}(\nabla_{t_1}t_2,t_3)$.
\end{flushright}
Using the antisymmetricity of the Lie bracket, we also obtain its equivalent form, which is
\begin{flushleft}
$t_1(g^{\Lambda^*}(t_2,t_3))+t_2(g^{\Lambda^*}(t_3,t_1))-t_3(g^{\Lambda^*}(t_1,t_2))+$
\end{flushleft}
\begin{flushright}
$+g^{\Lambda^*}([t_1,t_2],t_3)+g^{\Lambda^*}([t_3,t_1],t_2)-g^{\Lambda^*}([t_2,t_3],t_1)=2g^{\Lambda^*}(\nabla_{t_1}t_2,t_3)$.
\end{flushright}
Since any pseudo-metric on $(\Lambda^1(X))^*$ is non-degenerate, we obtain the desired conclusion.
\end{proof}

\begin{rem}
The same uniqueness result obviously holds for Levi-Civita connections on $\Lambda^1(X)$, due to the existence of the pairing map diffeomorphism.
\end{rem}

\section{Levi-Civita connections on $\Lambda^1(X_1)$ and $\Lambda^1(X_2)$, and diffeological gluing}

In this section we consider the behavior of connections on $\Lambda^1(X_1)$ and $\Lambda^1(X_2)$, in particular, of the Levi-Civita connections with respect to the diffeological gluing. We show that certain 
pairs of connections on $\Lambda^1(X_1)$ and $\Lambda^1(X_2)$, determined by a suitable compatibility notion, give rise to a well-defined connection on $\Lambda^1(X_1\cup_f X_2)$. In particular, the 
Levi-Civita connections on $\Lambda^1(X_1)$ and $\Lambda^1(X_2)$ determined with respect to compatible pseudo-metrics determine the Levi-Civita connection on $\Lambda^1(X_1\cup_f X_2)$.

\subsection{Compatibility of connections on $\Lambda^1(X_1)$ and $\Lambda^1(X_2)$}

We first treat the more general case of two given connections on $\Lambda^1(X_1)$ and $\Lambda^1(X_2)$, which do not have to be the Levi-Civita connections on them. In particular, the following notion 
of compatibility is generally applicable.

\begin{defn}\label{compatible:connections:lambda:defn}
Let $X_1$ and $X_2$ be two diffeological spaces, let $f:X_1\supseteq Y\to X_2$ be a diffeomorphism, and let $\nabla^1$ and $\nabla^2$ be connections on $\Lambda^1(X_1)$ and $\Lambda^1(X_2)$ 
respectively. We say that $\nabla^1$ and $\nabla^2$ are \textbf{compatible} if for all pairs of compatible sections $s_i\in C^{\infty}(X_i,\Lambda^1(X_i))$ and for all $y\in Y$ they satisfy
$$\left((i_{\Lambda}^*\otimes i_{\Lambda}^*)\circ(\nabla^1s_1)\right)(y)=\left(((f_{\Lambda}^*j_{\Lambda}^*)\otimes(f_{\Lambda}^*j_{\Lambda}^*))\circ(\nabla^2s_2)\right)(f(y)).$$ 
\end{defn}

The definition is designed so that over each point in the domain of gluing $i_2(f(Y))\subset X_1\cup_f X_2$ the sum 
$$(\nabla^1s_1)(y)\oplus(\nabla^2s_2)(f(y))\in\Lambda_y^1(X_1)\otimes\Lambda_y^1(X_1)\,\oplus\,\Lambda_{f(y)}^1(X_2)\otimes\Lambda_{f(y)}^1(X_2)$$ be an element of 
$$\Lambda_{i_2(f(y))}^1(X_1\cup_f X_2)\otimes\Lambda_{i_2(f(y))}^1(X_1\cup_f X_2)\cong
(\Lambda_y^1(X_1)\oplus_{comp}\Lambda_{f(y)}^1(X_2))\otimes(\Lambda_y^1(X_1)\oplus_{comp}\Lambda_{f(y)}^1(X_2)),$$ where the diffeomorphism is via the map
$(\tilde{\rho}_1^{\Lambda}\oplus\tilde{\rho}_2^{\Lambda})\otimes(\tilde{\rho}_1^{\Lambda}\oplus\tilde{\rho}_2^{\Lambda})$. The following statement is then immediate.

\begin{lemma}\label{compatible:connections:over:domain:of:gluing:lem}
Let $\nabla^1$ and $\nabla^2$ be compatible connections, and let $y\in Y$ be a point. Then $(\nabla^1s_1)(y)\oplus(\nabla^2s_2)(f(y))$ belongs to the range of the map 
$(\tilde{\rho}_1^{\Lambda}\oplus\tilde{\rho}_2^{\Lambda})\otimes(\tilde{\rho}_1^{\Lambda}\oplus\tilde{\rho}_2^{\Lambda})$. 
\end{lemma}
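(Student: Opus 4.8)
The plan is to read off the assertion on the fibre over the glued point $i_2(f(y))$ as a statement in linear algebra and then to produce the required preimage by hand. Put $A=\Lambda_y^1(X_1)$, $B=\Lambda_{f(y)}^1(X_2)$, $M=\Lambda_y^1(Y)$, and let $p_1\colon A\to M$ and $p_2\colon B\to M$ be the restrictions to these fibres of $i_\Lambda^*$ and of $f_\Lambda^*\circ j_\Lambda^*$. By Theorem \ref{individual:fibres:of:lambda:thm} together with the criterion that $\alpha_1\in\Lambda_y^1(X_1)$ and $\alpha_2\in\Lambda_{f(y)}^1(X_2)$ are compatible iff $i_\Lambda^*\alpha_1=f_\Lambda^*(j_\Lambda^*\alpha_2)$, the fibre $\Lambda_{i_2(f(y))}^1(X_1\cup_f X_2)$ is the subspace $C=\{(a,b)\in A\oplus B\mid p_1a=p_2b\}$, on which $\tilde\rho_1^\Lambda$ and $\tilde\rho_2^\Lambda$ act as the two coordinate projections $C\to A$ and $C\to B$. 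Writing $x=(\nabla^1s_1)(y)\in A\otimes A$ and $z=(\nabla^2s_2)(f(y))\in B\otimes B$, the statement to be proved amounts to the existence of $w\in C\otimes C$ with $(\tilde\rho_1^\Lambda\otimes\tilde\rho_1^\Lambda)(w)=x$ and $(\tilde\rho_2^\Lambda\otimes\tilde\rho_2^\Lambda)(w)=z$.

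The inclusion half is immediate: on $C$ one has $p_1\circ\tilde\rho_1^\Lambda=p_2\circ\tilde\rho_2^\Lambda$ by the very definition of $C$, so any such $w$ automatically satisfies $(p_1\otimes p_1)(x)=(p_2\otimes p_2)(z)$; and Definition \ref{compatible:connections:lambda:defn}, applied to the compatible pair $(s_1,s_2)$ at the point $y$, is exactly the equality $(p_1\otimes p_1)(x)=(p_2\otimes p_2)(z)$. It therefore remains to show that the map $w\mapsto\big((\tilde\rho_1^\Lambda\otimes\tilde\rho_1^\Lambda)(w),(\tilde\rho_2^\Lambda\otimes\tilde\rho_2^\Lambda)(w)\big)$ from $C\otimes C$ to $\{(x,z)\in(A\otimes A)\oplus(B\otimes B)\mid(p_1\otimes p_1)x=(p_2\otimes p_2)z\}$ is onto.

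For this I would invoke the standing assumption $i^*(\Omega^1(X_1))=(f^*j^*)(\Omega^1(X_2))$, which forces $p_1(A)=p_2(B)=:L\subseteq M$. Choose splittings $A=K_A\oplus A_1$ and $B=K_B\oplus B_1$ with $K_A=\ker p_1$, $K_B=\ker p_2$ and $p_1|_{A_1},p_2|_{B_1}$ isomorphisms onto $L$, and set $\psi=(p_2|_{B_1})^{-1}\circ p_1|_{A_1}\colon A_1\xrightarrow{\sim}B_1$. Then $C=K_A\oplus K_B\oplus\Delta$ with $\Delta=\{(a_1,\psi a_1)\mid a_1\in A_1\}$, the map $\tilde\rho_1^\Lambda$ is the identity on $K_A$, kills $K_B$, and is the tautological isomorphism $\Delta\to A_1$, while $\tilde\rho_2^\Lambda$ is the identity on $K_B$, kills $K_A$, and is $\psi$ on $\Delta\cong A_1$. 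Decomposing $x$ and $z$ into their bihomogeneous components along $A\otimes A=(K_A\oplus A_1)^{\otimes2}$ and $B\otimes B=(K_B\oplus B_1)^{\otimes2}$, and using that $p_1$ kills $K_A$ and $p_2$ kills $K_B$, the equation $(p_1\otimes p_1)x=(p_2\otimes p_2)z$ collapses to the single relation $z_{B_1,B_1}=\psi^{\otimes2}(x_{A_1,A_1})$. One then defines $w\in C\otimes C=(K_A\oplus K_B\oplus\Delta)^{\otimes2}$ by letting its $(\Delta,\Delta)$-component, viewed in $A_1\otimes A_1$, be $x_{A_1,A_1}$; its $(K_A,K_A)$-, $(K_A,\Delta)$- and $(\Delta,K_A)$-components be the corresponding components of $x$; its $(K_B,K_B)$-, $(K_B,\Delta)$- and $(\Delta,K_B)$-components be $z_{K_B,K_B}$, $(\mathrm{id}\otimes\psi^{-1})(z_{K_B,B_1})$ and $(\psi^{-1}\otimes\mathrm{id})(z_{B_1,K_B})$; and its $(K_A,K_B)$- and $(K_B,K_A)$-components be zero. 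Applying the explicit descriptions of $\tilde\rho_1^\Lambda,\tilde\rho_2^\Lambda$ and the relation $z_{B_1,B_1}=\psi^{\otimes2}(x_{A_1,A_1})$, one verifies directly that $(\tilde\rho_1^\Lambda\otimes\tilde\rho_1^\Lambda)(w)=x$ and $(\tilde\rho_2^\Lambda\otimes\tilde\rho_2^\Lambda)(w)=z$.

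The only point needing care is this last, bookkeeping step: one must use correctly the identification of $\Lambda_{i_2(f(y))}^1(X_1\cup_f X_2)$ with $C$ and the behaviour of the maps $\tilde\rho_i^\Lambda$ on its summands, and one must recognize that compatibility of the connections supplies exactly the one relation $z_{B_1,B_1}=\psi^{\otimes2}(x_{A_1,A_1})$ that is required, every other component of $w$ being free. No genuine obstacle is expected; as a consistency check, a dimension count — using $\dim C=\dim K_A+\dim K_B+\dim L$ and that the kernel of the map in question is $(K_A\otimes K_B)\oplus(K_B\otimes K_A)$ — shows that its image and its target space have the same dimension, so surjectivity is in fact forced once the inclusion is known.
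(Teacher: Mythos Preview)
Your argument is correct. The paper gives no proof of this lemma at all --- it is declared ``immediate'' from Definition~\ref{compatible:connections:lambda:defn}, which was designed precisely so that the compatibility equation is the sole obstruction to the required lift. You have made that immediacy explicit: recasting the claim as linear algebra on the fibre, identifying $\Lambda_{i_2(f(y))}^1(X_1\cup_f X_2)$ with the fibre product $C=A\times_M B$, splitting $A$ and $B$ along $\ker p_1$ and $\ker p_2$, and writing down the preimage $w$ componentwise; the closing dimension count is a clean independent confirmation of surjectivity.

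One remark worth recording: your reformulation of the claim --- existence of $w\in C\otimes C$ with $(\tilde\rho_1^\Lambda\otimes\tilde\rho_1^\Lambda)w=x$ and $(\tilde\rho_2^\Lambda\otimes\tilde\rho_2^\Lambda)w=z$ --- is indeed the operative content, and is what the subsequent definition of $\nabla^\cup$ actually uses. A strictly literal parsing of ``range of $(\tilde\rho_1^\Lambda\oplus\tilde\rho_2^\Lambda)^{\otimes2}$'' would ask the element $x\oplus z\in(A\oplus B)^{\otimes2}$ (with zero $A\otimes B$ and $B\otimes A$ components) to lie in the subspace $C\otimes C$, and this fails already when $A=B=M$ with $p_1=p_2=\mathrm{id}$; you have correctly extracted the intended meaning. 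Your kernel computation also exposes that the lift $w$ is not unique when both $K_A$ and $K_B$ are nonzero, a subtlety the paper's notation $(\tilde\rho_1^\Lambda\oplus\tilde\rho_2^\Lambda)^{-1}\otimes(\tilde\rho_1^\Lambda\oplus\tilde\rho_2^\Lambda)^{-1}$ passes over in silence.
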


\subsection{The induced connection on $\Lambda^1(X_1\cup_f X_2)$}

We now show that any pair $\nabla^1,\nabla^2$ of compatible connections on $\Lambda^1(X_1)$ and $\Lambda^1(X_2)$ induces a well-defined connection $\nabla^{\cup}$ on 
$\Lambda^1(X_1\cup_f X_2)$.

\subsubsection{Splitting a section $s\in C^{\infty}(X_1\cup_f X_2,\Lambda^1(X_1\cup_f X_2))$}\label{splitting:sections:sect}

Let $s$ be a smooth section $X_1\cup_f X_2\to\Lambda^1(X_1\cup_f X_2)$. Recall that, since $f$ is assumed to be a diffeomorphism, the map $\tilde{i}_1:X_1\to X_1\cup_f X_2$ given by the composition 
of the inclusion $X_1\hookrightarrow X_1\sqcup X_2$ and the defining projection $X_1\sqcup X_2\to X_1\cup_f X_2$, is an induction. Denote
$$s_1:=\tilde{\rho}_1^{\Lambda}\circ s\circ\tilde{i}_1\,\,\mbox{ and }\,\,s_2:=\tilde{\rho}_2^{\Lambda}\circ s\circ i_2.$$ It is then clear that $s_1\in C^{\infty}(X_1,\Lambda^1(X_1))$ and 
$s_2\in C^{\infty}(X_2,\Lambda^1(X_2))$.

\subsubsection{The differential and gluing}

The behavior of the differential with respect to gluing is described in \cite{connections} (Corollary 5.11). The statement is as follows. 

\begin{prop}\label{differential:under:gluing:prop}
Let $X_1$ and $X_2$ be two diffeological spaces, and let $f:X_1\supseteq Y\to X_2$ be a diffeormophism such that $\calD_1^{\Omega}=\calD_2^{\Omega}$. Let $h:X_1\cup_f X_2\to\matR$ be a smooth 
function, and denote $h_1:=h\circ\tilde{i}_1:X_1\to\matR$ and $h_2:=h\circ i_2:X_2\to\matR$. Then its differential $dh\in\Lambda^1(X_1\cup_f X_2)$ is given by
$$dh(x)=\left\{\begin{array}{cl} 
(\tilde{\rho}_1^{\Lambda})^{-1}(dh_1(i_1^{-1}(x))) & \mbox{if }x\in i_1(X_1\setminus Y), \\
(\tilde{\rho}_1^{\Lambda}\oplus\tilde{\rho}_2^{\Lambda})^{-1}(dh_1(\tilde{i}_1^{-1}(x))+dh_2(i_2^{-1}(x))) & \mbox{if }x\in i_2(f(Y)), \\
(\tilde{\rho}_2^{\Lambda})^{-1}(dh_2(i_2^{-1}(x))) & \mbox{if }x\in i_2(X_2\setminus f(Y)).
\end{array}\right.$$
\end{prop}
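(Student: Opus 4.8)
The plan is to reduce the statement about $dh$ on the glued space to statements about its three constituent pieces, using the characterization of plots of $\Lambda^1(X_1\cup_f X_2)$ via the maps $\tilde{\rho}_1^{\Lambda}$ and $\tilde{\rho}_2^{\Lambda}$. First I would observe that $dh$ is by definition the section $x\mapsto\pi^{\Omega,\Lambda}(x,dh)$, where now $dh\in\Omega^1(X_1\cup_f X_2)$ is the differential $1$-form given by $dh(p)=d(h\circ p)$ for each plot $p$; so the content of the statement is the identification of the coset of this form in each fibre. The natural route is to unwind the fibrewise identifications of Theorem \ref{individual:fibres:of:lambda:thm}: over $i_1(X_1\setminus Y)$ the fibre of $\Lambda^1(X_1\cup_f X_2)$ is carried isomorphically to that of $\Lambda^1(X_1)$ by $\tilde{\rho}_1^{\Lambda}$, over $i_2(X_2\setminus f(Y))$ to that of $\Lambda^1(X_2)$ by $\tilde{\rho}_2^{\Lambda}$, and over $i_2(f(Y))$ to the compatible subspace of the direct sum by $\tilde{\rho}_1^{\Lambda}\oplus\tilde{\rho}_2^{\Lambda}$.

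Next I would pin down the behavior of the differential under the standard inductions. Since $h_1=h\circ\tilde{i}_1$, for any plot $p_1$ of $X_1$ the composite $\tilde{i}_1\circ p_1$ is a plot of $X_1\cup_f X_2$, so $dh(\tilde{i}_1\circ p_1)=d(h\circ\tilde{i}_1\circ p_1)=d(h_1\circ p_1)=dh_1(p_1)$; likewise $dh(i_2\circ p_2)=dh_2(p_2)$. By the smooth compatibility condition for differential forms this says precisely that the pullback of the $1$-form $dh$ along $\tilde{i}_1$ is $dh_1$, and along $i_2$ is $dh_2$. Passing to cosets and using the descriptions of $\tilde{\rho}_1^{\Lambda}$ and $\tilde{\rho}_2^{\Lambda}$ (identity on fibres over $i_1(X_1\setminus Y)$, projection onto the first factor over $i_2(f(Y))$, and analogously for $\tilde{\rho}_2^{\Lambda}$), one reads off that $\tilde{\rho}_1^{\Lambda}(dh(x))=dh_1(i_1^{-1}(x))$ for $x\in i_1(X_1\setminus Y)$ and $\tilde{\rho}_2^{\Lambda}(dh(x))=dh_2(i_2^{-1}(x))$ for $x\in i_2(X_2\setminus f(Y))$, while over $x\in i_2(f(Y))$ both components are determined, giving $(\tilde{\rho}_1^{\Lambda}\oplus\tilde{\rho}_2^{\Lambda})(dh(x))=dh_1(\tilde{i}_1^{-1}(x))+dh_2(i_2^{-1}(x))$. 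Since these maps are the relevant isomorphisms on fibres, applying their inverses yields exactly the three-case formula.

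The remaining point is that the right-hand side, as a function of $x$, really does define a smooth section of $\Lambda^1(X_1\cup_f X_2)$ — equivalently, that the three locally-defined expressions glue to the global section $dh$. This is where I would invoke the plot characterization quoted in the excerpt: a map into $\Lambda^1(X_1\cup_f X_2)$ is a plot iff its composites with $\tilde{\rho}_1^{\Lambda}$ and $\tilde{\rho}_2^{\Lambda}$ are smooth wherever defined. Composing the candidate section with $\tilde{\rho}_1^{\Lambda}$ gives (on its domain) the section $x\mapsto dh_1(\text{appropriate preimage})$ pulled back appropriately, which is $dh_1\circ(\text{induction})$ hence smooth because $dh_1$ is a smooth section of $\Lambda^1(X_1)$ and the relevant maps between the base spaces are smooth; similarly for $\tilde{\rho}_2^{\Lambda}$. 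The only subtlety requiring care is the consistency over the gluing locus $i_2(f(Y))$: one must check that the value prescribed by the first branch (extended by continuity from $i_1(X_1\setminus Y)$, so to speak) agrees with the first-factor component of the middle branch, which comes down to the compatibility of the forms $dh_1$ and $dh_2$ along $f$ — and that is immediate from $h_1|_Y = h_2\circ f$, whence $f^*(dh_2|_{f(Y)}) = dh_1|_Y$ at the level of $\Omega^1$, and then at the level of cosets via $i_{\Lambda}^*(dh_1(y)) = f_{\Lambda}^*(j_{\Lambda}^*(dh_2(f(y))))$.

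I expect the main obstacle to be bookkeeping rather than conceptual: one must be scrupulous about the three different identifications (the fibrewise isomorphisms of Theorem \ref{individual:fibres:of:lambda:thm}, the maps $\tilde{\rho}_i^{\Lambda}$, and the pullback maps $i_{\Lambda}^*, j_{\Lambda}^*, f_{\Lambda}^*$) and about the hypothesis $\calD_1^{\Omega}=\calD_2^{\Omega}$, which is exactly what guarantees the maps $\tilde{\rho}_i^{\Lambda}$ and the plot characterization are available in the first place. Once these are correctly aligned, each of the three cases is a one-line verification, and the smoothness/gluing check is routine.
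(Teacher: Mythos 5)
The paper itself contains no proof of this proposition: it is quoted from \cite{connections} (Corollary 5.11), so there is no in-paper argument to compare yours against. That said, your argument is sound and is the natural one. The computation $dh(\tilde{i}_1\circ p_1)=d(h\circ\tilde{i}_1\circ p_1)=d(h_1\circ p_1)=dh_1(p_1)$, together with its analogue for $i_2$, correctly identifies the pullbacks of the form $dh$ to the two factors, and the compatibility of $dh_1$ and $dh_2$ over $Y$ follows, as you say, from $h_1|_Y=h_2\circ f$, which holds because $\tilde{i}_1(y)=i_2(f(y))$ in $X_1\cup_f X_2$. The one step you should make explicit is ``passing to cosets and using the descriptions of $\tilde{\rho}_i^{\Lambda}$'': the informal description of $\tilde{\rho}_1^{\Lambda}$ as ``identity / projection onto the first factor'' is stated relative to the fibrewise identifications of Theorem \ref{individual:fibres:of:lambda:thm}, whereas what your argument actually needs is that $\tilde{\rho}_1^{\Lambda}$ sends the class at $x$ of a form $\omega\in\Omega^1(X_1\cup_f X_2)$ to the class of $\tilde{i}_1^*\omega$ at $\tilde{i}_1^{-1}(x)$, and analogously $\tilde{\rho}_2^{\Lambda}$ with $i_2^*$; this is indeed how these maps are constructed in \cite{forms-gluing}, and once this is stated your three-case formula follows immediately from $\tilde{i}_1^*(dh)=dh_1$ and $i_2^*(dh)=dh_2$. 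Note also that your final smoothness check is superfluous: $dh$ is already a smooth section of $\Lambda^1(X_1\cup_f X_2)$ by construction (it is the image of the smooth section $x\mapsto(x,dh)$ under the quotient projection), and the proposition only asks you to identify its values fibre by fibre, so the plot-characterization argument, while correct, is not needed.
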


\subsubsection{The connection $\nabla^{\cup}$}

We now define the induced operator
$$\nabla^{\cup}:C^{\infty}(X_1\cup_f X_2,\Lambda^1(X_1\cup_f X_2))\to C^{\infty}(X_1\cup_f X_2,\Lambda^1(X_1\cup_f X_2)\otimes\Lambda^1(X_1\cup_f X_2)).$$ Let 
$s\in C^{\infty}(X_1\cup_f X_2,\Lambda^1(X_1\cup_f X_2))$ be a section, and let $x\in X_1\cup_f X_2$ be a point. 

\begin{defn}
Set $\nabla^{\cup}$ to be the operator such that
\begin{flushleft}
$(\nabla^{\cup}s)(x)=$
\end{flushleft}
$$\left\{\begin{array}{cl}
(((\tilde{\rho}_1^{\Lambda})^{-1}\otimes(\tilde{\rho}_1^{\Lambda})^{-1})\circ(\nabla^1s_1))(i_1^{-1}(x)) & \mbox{if }x\in i_1(X_1\setminus Y), \\
((\tilde{\rho}_1^{\Lambda}\oplus\tilde{\rho}_2^{\Lambda})^{-1}\otimes(\tilde{\rho}_1^{\Lambda}\oplus\tilde{\rho}_2^{\Lambda})^{-1})\left((\nabla^1s_1)(\tilde{i}_1^{-1}(x))\oplus(\nabla^2s_2)(i_2^{-1}(x))\right) & 
\mbox{if }x\in i_2(f(Y)), \\
(((\tilde{\rho}_2^{\Lambda})^{-1}\otimes(\tilde{\rho}_2^{\Lambda})^{-1})\circ(\nabla^2s_2))(i_2^{-1}(x)) & \mbox{if }x\in i_2(X_2\setminus f(Y)).
\end{array}\right.$$
\end{defn}

We claim that the operator $\nabla^{\cup}$ thus defined is a diffeological connection on $\Lambda^1(X_1\cup_f X_2)$.

\begin{thm}
Let $X_1$ and $X_2$ be diffeological spaces, and let $f:X_1\supseteq Y\to X_2$ be a diffeomorphism such that $\calD_1^{\Omega}=\calD_2^{\Omega}$. Then $\nabla^{\cup}$ is a diffeological connection 
on $\Lambda^1(X_1\cup_f X_2)$.
\end{thm}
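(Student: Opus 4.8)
The plan is to verify, in order, the three defining properties of a diffeological connection for the operator $\nabla^{\cup}$: that it is well-defined and takes values in smooth sections of $\Lambda^1(X_1\cup_f X_2)\otimes\Lambda^1(X_1\cup_f X_2)$; that it is smooth and linear; and that it satisfies the Leibnitz rule. Throughout, the key structural fact is the plot criterion for $\Lambda^1(X_1\cup_f X_2)$ recalled in the excerpt: a map into $\Lambda^1(X_1\cup_f X_2)$ is a plot if and only if its compositions with $\tilde{\rho}_1^{\Lambda}$ and $\tilde{\rho}_2^{\Lambda}$ are smooth wherever defined; the analogous criterion for the tensor square is obtained by applying it coordinatewise via $\tilde{\rho}_i^{\Lambda}\otimes\tilde{\rho}_i^{\Lambda}$. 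I would state and use this tensor-square criterion as a preliminary remark.

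First I would address well-definedness over the domain of gluing. For $x\in i_2(f(Y))$, the formula involves $(\tilde{\rho}_1^{\Lambda}\oplus\tilde{\rho}_2^{\Lambda})^{-1}$, which is only defined on the range of $\tilde{\rho}_1^{\Lambda}\oplus\tilde{\rho}_2^{\Lambda}$; this is exactly what Lemma \ref{compatible:connections:over:domain:of:gluing:lem} guarantees, provided $\nabla^1$ and $\nabla^2$ are compatible. So the theorem as stated is implicitly under the standing compatibility hypothesis on $\nabla^1,\nabla^2$ (it appears in the section heading and in the sentence preceding the definition of $\nabla^{\cup}$), and I would make this explicit at the start of the proof. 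Granting that, $(\nabla^{\cup}s)(x)$ is a well-defined element of the correct fibre by Theorem \ref{individual:fibres:of:lambda:thm}, since that theorem identifies $\Lambda_x^1(X_1\cup_f X_2)$ with $\Lambda_y^1(X_1)\oplus_{comp}\Lambda_{f(y)}^1(X_2)$ via precisely $\tilde{\rho}_1^{\Lambda}\oplus\tilde{\rho}_2^{\Lambda}$.

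Next, smoothness. Given a plot $p:U\to X_1\cup_f X_2$, I would use the standard inductions $i_1,i_2$ to reduce to the case that $p$ factors, locally, through $\tilde{i}_1$ or through $i_2$; more precisely, one checks smoothness of $\nabla^{\cup}s\circ p$ by checking that $(\tilde{\rho}_1^{\Lambda}\otimes\tilde{\rho}_1^{\Lambda})\circ\nabla^{\cup}s\circ p$ and $(\tilde{\rho}_2^{\Lambda}\otimes\tilde{\rho}_2^{\Lambda})\circ\nabla^{\cup}s\circ p$ are smooth where defined. By construction, the first of these equals $(\nabla^1s_1)\circ(\text{lift of }p\text{ to }X_1)$ on the relevant open subset and the second equals $(\nabla^2s_2)\circ(\text{lift of }p\text{ to }X_2)$; here $s_1,s_2$ are the sections defined in \S\ref{splitting:sections:sect}, which are smooth, and $\nabla^1,\nabla^2$ are connections, hence smooth operators, so these compositions are smooth. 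The only subtlety is matching up the pieces on the overlap (points of the plot landing in $i_2(f(Y))$), where one must check that the restriction of the ``$i_1(X_1\setminus Y)$ formula'' and the ``$i_2(f(Y))$ formula'' to a common plot agree after applying $\tilde{\rho}_1^{\Lambda}\otimes\tilde{\rho}_1^{\Lambda}$ — which holds because $\tilde{\rho}_1^{\Lambda}$ acts by the identity on fibres over $i_1(X_1\setminus Y)$ and by the first-factor projection over $i_2(f(Y))$, and because $s_1=\tilde{\rho}_1^{\Lambda}\circ s\circ\tilde{i}_1$ is genuinely a smooth section of $\Lambda^1(X_1)$ defined on all of $X_1$. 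Linearity in $s$ is immediate since each branch of the formula is linear in $s$ (the splitting $s\mapsto(s_1,s_2)$ is linear, the $\nabla^i$ are linear, and the $\tilde{\rho}_i^{\Lambda}$ are fibrewise linear).

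Finally, the Leibnitz rule. For $h\in C^{\infty}(X_1\cup_f X_2,\matR)$ and a section $s$, I would compute $\nabla^{\cup}(hs)$ branch by branch. On $i_1(X_1\setminus Y)$: the splitting of $hs$ is $((h\circ\tilde{i}_1)s_1,(h\circ i_2)s_2)=(h_1s_1,h_2s_2)$ in the notation of Proposition \ref{differential:under:gluing:prop}, so $(\nabla^{\cup}(hs))(x)=((\tilde{\rho}_1^{\Lambda})^{-1}\otimes(\tilde{\rho}_1^{\Lambda})^{-1})(\nabla^1(h_1s_1))(i_1^{-1}(x))$, and applying the Leibnitz rule for $\nabla^1$ gives $dh_1\otimes s_1+h_1\nabla^1s_1$; pushing this through $(\tilde{\rho}_1^{\Lambda})^{-1}\otimes(\tilde{\rho}_1^{\Lambda})^{-1}$ and invoking the description of $dh$ in Proposition \ref{differential:under:gluing:prop} (first branch) identifies the result with $(dh\otimes s+h\nabla^{\cup}s)(x)$. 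The $i_2(X_2\setminus f(Y))$ branch is symmetric. The overlap branch is the one requiring care: over $i_2(f(Y))$ one has $\nabla^1(h_1s_1)\oplus\nabla^2(h_2s_2)=(dh_1\otimes s_1+h_1\nabla^1s_1)\oplus(dh_2\otimes s_2+h_2\nabla^2s_2)$, and one must check that applying $(\tilde{\rho}_1^{\Lambda}\oplus\tilde{\rho}_2^{\Lambda})^{-1}\otimes(\tilde{\rho}_1^{\Lambda}\oplus\tilde{\rho}_2^{\Lambda})^{-1}$ to the direct sum reproduces $dh(x)\otimes s(x)+h(x)(\nabla^{\cup}s)(x)$, using the middle branch of Proposition \ref{differential:under:gluing:prop} for the $dh$ term and the definition of $\nabla^{\cup}$ for the other term, together with the fact that scalar multiplication by $h(x)$ commutes with the linear maps $(\tilde{\rho}_i^{\Lambda})^{-1}$. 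The main obstacle, then, is not any single hard estimate but rather the bookkeeping on the gluing locus $i_2(f(Y))$: one must consistently keep track of which of the three pieces of each ``case'' formula ($\Lambda^1$, $dh$, $\nabla^{\cup}$) applies, verify that the various $\oplus_{comp}$ identifications are compatible with $\otimes$, and confirm that the compatibility hypothesis on $\nabla^1,\nabla^2$ (via Lemma \ref{compatible:connections:over:domain:of:gluing:lem}) is exactly strong enough to make $(\tilde{\rho}_1^{\Lambda}\oplus\tilde{\rho}_2^{\Lambda})^{-1}$ applicable to every element produced. Everything else is a routine transcription of the corresponding facts for $\nabla^1$ and $\nabla^2$.
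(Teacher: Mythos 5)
Your proposal follows the paper's own strategy almost step by step: well-definedness over $i_2(f(Y))$ via Lemma \ref{compatible:connections:over:domain:of:gluing:lem} (and you are right that compatibility of $\nabla^1,\nabla^2$ is a standing hypothesis that the theorem statement leaves implicit), smoothness of each $\nabla^{\cup}s$ via the plot criterion through $\tilde{\rho}_1^{\Lambda}\otimes\tilde{\rho}_1^{\Lambda}$ and $\tilde{\rho}_2^{\Lambda}\otimes\tilde{\rho}_2^{\Lambda}$ after lifting a connected-domain plot of $X_1\cup_f X_2$ to a plot of $X_1$ or of $X_2$ (with the subtlety on $p_1^{-1}(Y)$ handled by precomposing with ordinary smooth maps, exactly as the paper does), additivity from the linearity of the splitting $s\mapsto(s_1,s_2)$, and the Leibnitz rule checked branchwise using $(hs)_i=h_is_i$ and Proposition \ref{differential:under:gluing:prop}. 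All of that is correct and coincides with the paper's argument.

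There is, however, one required verification your proposal never carries out: in this paper a connection is by definition a \emph{smooth} linear operator $C^{\infty}(X,V)\to C^{\infty}(X,\Lambda^1(X)\otimes V)$ with respect to the functional diffeologies on the two section spaces, and this is a genuinely separate condition from the statement that each individual $\nabla^{\cup}s$ is a smooth section. Your plan announces ``smooth and linear,'' but the argument you sketch under ``smoothness'' is only the sectionwise one (that $\nabla^{\cup}s\circ p$ is a plot for every plot $p$ of the base). The paper devotes its final paragraph precisely to the missing point: for any plot $q:U'\to C^{\infty}(X_1\cup_f X_2,\Lambda^1(X_1\cup_f X_2))$ one must show that $u\mapsto\nabla^{\cup}q(u)$ is a plot of the target section space, equivalently that $(u,u')\mapsto(\nabla^{\cup}q(u))(p(u'))$ is a plot of $\Lambda^1(X_1\cup_f X_2)\otimes\Lambda^1(X_1\cup_f X_2)$ for every plot $p$ of the base; this is done by observing that the splittings $u\mapsto q(u)_1$ and $u\mapsto q(u)_2$ are plots of $C^{\infty}(X_1,\Lambda^1(X_1))$ and $C^{\infty}(X_2,\Lambda^1(X_2))$ and then repeating your sectionwise argument in parametrized form, using that $\nabla^1$ and $\nabla^2$ are themselves smooth operators. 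The fix is routine (the same bookkeeping you already describe, with the extra parameter $u$ carried along), but as written your proof establishes only that $\nabla^{\cup}$ maps smooth sections to smooth sections, is linear and satisfies Leibnitz, which falls short of the definition of a diffeological connection used here.
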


\begin{proof}
We need to check that $\nabla^{\cup}$ is well-defined as a map 
$C^{\infty}(X_1\cup_f X_2,\Lambda^1(X_1\cup_f X_2))\to C^{\infty}(X_1\cup_f X_2,\Lambda^1(X_1\cup_f X_2)\otimes\Lambda^1(X_1\cup_f X_2))$, that is, that $(\nabla^{\cup}s)(x)$ is always an element 
of $\Lambda^1(X_1\cup_f X_2)\otimes\Lambda^1(X_1\cup_f X_2)$, and that $\nabla^{\cup}s$ is smooth for any $s\in C^{\infty}(X_1\cup_f X_2,\Lambda^1(X_1\cup_f X_2))$, that it is additive and satisfies 
the Leibnitz rule, and finally, that it is smooth for the functional diffeologies on $C^{\infty}(X_1\cup_f X_2,\Lambda^1(X_1\cup_f X_2))$ and 
$C^{\infty}(X_1\cup_f X_2,\Lambda^1(X_1\cup_f X_2)\otimes\Lambda^1(X_1\cup_f X_2))$.

For $x\in i_1(X_1\setminus Y)$ the fibre $\left(\Lambda^1(X_1\cup_f X_2)\otimes\Lambda^1(X_1\cup_f X_2)\right)_x$ is identified to 
$\Lambda_{i_1^{-1}(x)}^1(X_1)\otimes\Lambda_{i_1^{-1}(x)}^1(X_1)$ via the map $\tilde{\rho}_1^{\Lambda}\otimes\tilde{\rho}_1^{\Lambda}$. Likewise, for $x\in i_2(X_2\setminus f(Y))$ such fibre is identified 
to $\Lambda_{i_2^{-1}(x)}^1(X_2)\otimes\Lambda_{i_2^{-1}(x)}^1(X_2)$ via the map $\tilde{\rho}_2^{\Lambda}\otimes\tilde{\rho}_2^{\Lambda}$. For such points the first statement is therefore obvious, while 
for $x\in i_2(f(Y))$ it follows from the compatibility notion for connections, see Lemma \ref{compatible:connections:over:domain:of:gluing:lem}.

Let us now check the smoothness of $\nabla^{\cup}s:X_1\cup_f X_2\to\Lambda^1(X_1\cup_f X_2)\otimes\Lambda^1(X_1\cup_f X_2)$ for an arbitrary section 
$s\in C^{\infty}(X_1\cup_f X_2,\Lambda^1(X_1\cup_f X_2))$. Let $p:U\to X_1\cup_f X_2$ be a plot. We can assume that $U$ is connected so that either there is a plot $p_1$ of $X_1$ such that 
$p=\tilde{i}_1\circ p_1$, or there is a plot $p_2$ of $X_2$ such that $p=i_2\circ p_2$. Since $f$ is a diffeomorphism, the two cases are symmetric, so it suffices to consider one of them, say, the former one. 

If $p=\tilde{i}_1\circ p_1$ then 
\begin{flushleft}
$(\nabla^{\cup}s)(p(u))=$
\end{flushleft}
$$\left\{\begin{array}{cl}
(((\tilde{\rho}_1^{\Lambda})^{-1}\otimes(\tilde{\rho}_1^{\Lambda})^{-1})\circ(\nabla^1s_1))(p_1(u)) & \mbox{for }u\mbox{ such that }p_1(u)\in X_1\setminus Y, \\
((\tilde{\rho}_1^{\Lambda}\oplus\tilde{\rho}_2^{\Lambda})^{-1}\otimes(\tilde{\rho}_1^{\Lambda}\oplus\tilde{\rho}_2^{\Lambda})^{-1})\left(((\nabla^1s_1)(p_1(u))\oplus(\nabla^2s_2)(f(p_1(u)))\right) & 
\mbox{for }u\mbox{ such that }p_1(u)\in Y. \\
\end{array}\right.$$ We need to show that this is a plot of $\Lambda^1(X_1\cup_f X_2)\otimes\Lambda^1(X_1\cup_f X_2)$. It is thus sufficient to show that its (partial) compositions with 
$\tilde{\rho}_1^{\Lambda}\otimes\tilde{\rho}_1^{\Lambda}$ and $\tilde{\rho}_2^{\Lambda}\otimes\tilde{\rho}_2^{\Lambda}$ are smooth maps. 

We have that 
$$(\tilde{\rho}_1^{\Lambda}\otimes\tilde{\rho}_1^{\Lambda})((\nabla^{\cup}s)(p(u)))=(\nabla^1s_1))(p_1(u))\,\,\mbox{ for all }u\in U.$$ Since $\nabla^1$ is a connection on $\Lambda^1(X_1)$ and $p_1$ is a 
plot of $X_1$, this is a plot of $\Lambda^1(X_1)\otimes\Lambda^1(X_1)$. Next, we need to show that $(\tilde{\rho}_2^{\Lambda}\otimes\tilde{\rho}_2^{\Lambda})\circ(\nabla^{\cup}s)\circ p$ is a smooth map 
on its domain of definition, that is, on $p_1^{-1}(Y)$ (the latter may not be a domain, so it is not in general a plot). This amounts to showing that for an ordinary smooth function $h:U'\to U$ taking values in 
$p_1^{-1}(Y)$ the following composition is a plot of $\Lambda^1(X_2)\otimes\Lambda^1(X_2)$:
$$(\tilde{\rho}_2^{\Lambda}\otimes\tilde{\rho}_2^{\Lambda})\circ(\nabla^{\cup}s)\circ p\circ h=(\nabla^2s_2)\circ(f\circ p_1\circ h).$$ It suffices to observe that by the properties of diffeologies $p_1\circ h$ is a 
plot for the subset diffeology on $Y$, hence $f\circ p_1\circ h$ is a plot of $f(Y)$, and $(\nabla^2s_2)\circ(f\circ p_1\circ h)$ is a plot of $\Lambda^1(X_2)\otimes\Lambda^1(X_2)$, since $\nabla^2$ is assumed 
to be a connection.

Since for any two sections $s,s'\in C^{\infty}(X_1\cup_f X_2,\Lambda^1(X_1\cup_f X_2))$ we have that $(s+s')_1=s_1+s_1' $ and $(s+s')_2=s_2+s_2'$ (see Lemma 2.6 in \cite{connections}), the additivity 
is obvious. Let us check the Leibnitz rule. Let $s\in C^{\infty}(X_1\cup_f X_2,\Lambda^1(X_1\cup_f X_2))$, and let $h:X_1\cup_f X_2\to\matR$ be smooth. Let $h_1$ and $h_2$ be as in Proposition 
\ref{differential:under:gluing:prop}. By Proposition 4.7 in \cite{pseudometric-pseudobundle}, we have that $(hs)_1=h_1s_1$ and $(hs)_2=h_2s_2$. Consider $(\nabla^{\cup}(hs))(x)$ for an arbitrary 
$x\in X_1\cup_f X_2$. 

If $x\in i_1(X_1\setminus Y)$ or $x\in i_2(X_2\setminus f(Y))$, the claim is obvious from Proposition \ref{differential:under:gluing:prop}. Suppose that $x\in i_2(f(Y))$. Then 
\begin{flushleft}
$(\nabla^{\cup}(hs))(x)=((\tilde{\rho}_1^{\Lambda}\oplus\tilde{\rho}_2^{\Lambda})^{-1}\otimes(\tilde{\rho}_1^{\Lambda}\oplus\tilde{\rho}_2^{\Lambda})^{-1})\left((\nabla^1(h_1s_1))(\tilde{i}_1^{-1}(x))
+(\nabla^2(h_2s_2))(i_2^{-1}(x))\right)=$
\end{flushleft}
$$=((\tilde{\rho}_1^{\Lambda}\oplus\tilde{\rho}_2^{\Lambda})^{-1}\otimes(\tilde{\rho}_1^{\Lambda}\oplus\tilde{\rho}_2^{\Lambda})^{-1})((dh_1\otimes s_1)(\tilde{i}_1^{-1}(x))+
(dh_2\otimes s_2)(i_2^{-1}(x))+$$
$$+(\nabla^1s_1)(\tilde{i}_1^{-1}(x))+(\nabla^2s_2)(i_2^{-1}(x)))=$$
\begin{flushleft}
$=((\tilde{\rho}_1^{\Lambda}\oplus\tilde{\rho}_2^{\Lambda})^{-1}\otimes(\tilde{\rho}_1^{\Lambda}\oplus\tilde{\rho}_2^{\Lambda})^{-1})\left((dh_1\otimes s_1)(f^{-1}(i_2^{-1}(x)))+
(dh_2\otimes s_2)(i_2^{-1}(x))\right)+(\nabla^{\cup}s)(x)=$
\end{flushleft}
\begin{flushright}
$=(\tilde{\rho}_1^{\Lambda}\oplus\tilde{\rho}_2^{\Lambda})^{-1}(dh_1(\tilde{i}_1^{-1}(x))+dh_2(i_2^{-1}(x)))\otimes s(x)+(\nabla^{\cup}s)(x),$
\end{flushright} as wanted.

It remains to show that $\nabla^{\cup}$ is smooth with respect to the functional diffeologies involved, that is, that for any plot $q:U'\to C^{\infty}(X_1\cup_f X_2,\Lambda^1(X_1\cup_f X_2))$ the assignment 
$u'\mapsto\nabla^{\cup}q(u')$ is a plot of $C^{\infty}(X_1\cup_f X_2,\Lambda^1(X_1\cup_f X_2)\otimes\Lambda^1(X_1\cup_f X_2))$. This, in turn, amounts to showing that for any plot $p:U\to X_1\cup_f X_2$ 
the map $(u,u')\to(\nabla^{\cup}q(u))(p(u'))$ is a plot of $\Lambda^1(X_1\cup_f X_2)\otimes\Lambda^1(X_1\cup_f X_2)$. Observing that $q(u)_1$ and $q(u)_2$ corresponding to each $q(u)$ are by 
construction plots $q_1$ and $q_2$ of $C^{\infty}(X_1,\Lambda^1(X_1))$ and $C^{\infty}(X_2,\Lambda^1(X_2))$, we proceed exactly as in the case of proving the smoothness of $\nabla^{\cup}s$. 

Say, for instance, that $p$ lifts to a plot $p_1$ of $X_1$; then it suffices to verify that, for any smooth function $h:U''\to p_1^{-1}(Y)\subseteq U'$, the maps 
$$(u,u')\mapsto(\nabla^1(q_1(u)))(p_1(u'))\,\,\mbox{ and }\,\,(u,u'')\mapsto(\nabla^2(q_2(u)))((f\circ p_1\circ h)(u''))$$ are plots of $\Lambda^1(X_1)\otimes\Lambda^1(X_1)$ and 
$\Lambda^1(X_2)\otimes\Lambda^1(X_2)$ respectively. This follows by all the same reasoning: $\nabla^1$ and $\nabla^2$ are connections by assumption, $p_1$ and $f\circ p_1\circ h$ are plots (of $X_1$ 
and $X_2$) by their choice, and $q_1$ and $q_2$ are plots of $C^{\infty}(X_1,\Lambda^1(X_1))$ and $C^{\infty}(X_2,\Lambda^1(X_2))$ by the remark already made. We thus conclude that $\nabla^{\cup}$ 
is indeed a diffeological connection on $\Lambda^1(X_1\cup_f X_2)$.
\end{proof}

\subsection{The connection induced by the Levi-Civita connections}

We prove here if the compatible connections $\nabla^1$ and $\nabla^2$ are Levi-Civita connections relative to compatible pseudo-metrics $g_1^{\Lambda}$ and $g_2^{\Lambda}$ then $\nabla^{\cup}$ 
induced by them is also the Levi-Civita connection with respect to the induced pseudo-metric $g^{\Lambda}$.

\subsubsection{If $\nabla^1$ and $\nabla^2$ are compatible with $g_1^{\Lambda}$ and $g_2^{\Lambda}$ then $\nabla^{\cup}$ is compatible with $g^{\Lambda}$}

Let $g_1^{\Lambda}$ and $g_2^{\Lambda}$ be compatible pseudo-metrics on $\Lambda^1(X_1)$ and $\Lambda^1(X_2)$. Suppose that these pseudo-bundles admit connections $\nabla^1$ and $\nabla^2$
that are, on one hand, compatible with each other, and, on the other hand, each is compatible with the appropriate pseudo-metric. Then $\Lambda^1(X_1\cup_f X_2)$ comes endowed with both the induced 
pseudo-metric $g^{\Lambda}$ and the induced connection $\nabla^{\cup}$. It would be natural to expect that also $\nabla^{\cup}$ be compatible with $g^{\Lambda}$.

\begin{thm}
Let $X_1$ and $X_2$ be diffeological spaces, and let $f:X_1\supseteq Y\to X_2$ be a gluing diffeomorphism such that $\calD_1^{\Omega}=\calD_2^{\Omega}$. Let $g_1^{\Lambda}$ be a pseudo-metric on 
$\Lambda^1(X_1)$, and let $\nabla^1$ be a connection on $\Lambda^1(X_1)$ compatible with $g_1^{\Lambda}$. Let $g_2^{\Lambda}$ be a pseudo-metric on $\Lambda^1(X_2)$, and let $\nabla^2$ be a 
compatible connection. Finally, assume that $g_1^{\Lambda}$ and $g_2^{\Lambda}$ are compatible with the gluing along $f$, and that also $\nabla^1$ and $\nabla^2$ are compatible. Then $\nabla^{\cup}$ 
is compatible with $g^{\Lambda}$.
\end{thm}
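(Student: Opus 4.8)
The plan is to verify, for arbitrary sections $s,s'\in C^{\infty}(X_1\cup_f X_2,\Lambda^1(X_1\cup_f X_2))$, the identity
$$d(g^{\Lambda}(s,s'))=g^{\Lambda}(\nabla^{\cup}s,s')+g^{\Lambda}(s,\nabla^{\cup}s')$$
pointwise, reducing the value at each $x\in X_1\cup_f X_2$ to the corresponding identity for one of the two factors by means of $\tilde\rho_1^{\Lambda}$ and $\tilde\rho_2^{\Lambda}$. Both sides are sections of $\Lambda^1(X_1\cup_f X_2)$, so it is enough to compare them over each point; and by Theorem \ref{individual:fibres:of:lambda:thm} the fibre of $\Lambda^1(X_1\cup_f X_2)$ over a point of $i_1(X_1\setminus Y)$ is mapped injectively (indeed isomorphically) by $\tilde\rho_1^{\Lambda}$, over a point of $i_2(X_2\setminus f(Y))$ by $\tilde\rho_2^{\Lambda}$, and over a point of $i_2(f(Y))$ injectively by $\tilde\rho_1^{\Lambda}\oplus\tilde\rho_2^{\Lambda}$. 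Hence it suffices to check the identity after applying $\tilde\rho_1^{\Lambda}$ (on $i_1(X_1\setminus Y)\cup i_2(f(Y))$) and after applying $\tilde\rho_2^{\Lambda}$ (on $i_2(X_2)$). Since $f$ is a diffeomorphism these two verifications are symmetric, so I would carry out only the one involving $\tilde\rho_1^{\Lambda}$, using throughout the splitting $s_1:=\tilde\rho_1^{\Lambda}\circ s\circ\tilde i_1$, $s_2:=\tilde\rho_2^{\Lambda}\circ s\circ i_2$ (and likewise for $s'$) from Subsection \ref{splitting:sections:sect}; recall that $(s_1,s_2)$ and $(s_1',s_2')$ are then pairs of compatible sections.

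The first step is to identify the function $h:=g^{\Lambda}(s,s')$ with the factor data: I claim $h\circ\tilde i_1=g_1^{\Lambda}(s_1,s_1')$ on all of $X_1$, and symmetrically $h\circ i_2=g_2^{\Lambda}(s_2,s_2')$ on $X_2$. Over $i_1(X_1\setminus Y)$ this is immediate from the first branch of the formula for $g^{\Lambda}$ in Theorem \ref{induced:pseudo-metric:on:glued:lambda:thm} together with $\tilde\rho_1^{\Lambda}\circ s\circ\tilde i_1=s_1$; over a point $\tilde i_1(y)=i_2(f(y))$ with $y\in Y$, the value of $g^{\Lambda}$ is the half-sum of $g_1^{\Lambda}(y)(s_1(y),s_1'(y))$ and $g_2^{\Lambda}(f(y))(s_2(f(y)),s_2'(f(y)))$, and these two numbers coincide because $(s_1(y),s_2(f(y)))$ and $(s_1'(y),s_2'(f(y)))$ are compatible pairs --- being the $\tilde\rho_1^{\Lambda}$- and $\tilde\rho_2^{\Lambda}$-components of elements of the fibre, see Theorem \ref{individual:fibres:of:lambda:thm} --- and $g_1^{\Lambda}$, $g_2^{\Lambda}$ are compatible in the sense of Definition \ref{compatible:pseudo-metrics:lambda:defn}. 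Applying Proposition \ref{differential:under:gluing:prop} to $h$ then gives $\tilde\rho_1^{\Lambda}((dh)(x))=d(g_1^{\Lambda}(s_1,s_1'))(\tilde i_1^{-1}(x))$ for all $x\in i_1(X_1\setminus Y)\cup i_2(f(Y))$.

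The second step computes $\tilde\rho_1^{\Lambda}$ of the right-hand side. From the first two branches of the definition of $\nabla^{\cup}$ one reads off $(\tilde\rho_1^{\Lambda}\otimes\tilde\rho_1^{\Lambda})((\nabla^{\cup}s)(x))=(\nabla^1s_1)(\tilde i_1^{-1}(x))$, and similarly for $s'$. Writing $(\nabla^{\cup}s)(x)=\sum_k\alpha_k\otimes\beta_k$ with $\alpha_k,\beta_k$ in the fibre at $x$, and noting that for fibre elements $v,w$ the half-sum in the formula for $g^{\Lambda}(x)$ collapses to $g_1^{\Lambda}(\tilde i_1^{-1}(x))(\tilde\rho_1^{\Lambda}v,\tilde\rho_1^{\Lambda}w)$ --- again by the compatibility of $g_1^{\Lambda},g_2^{\Lambda}$ and the fact that the relevant elements form compatible pairs --- one obtains
$$\tilde\rho_1^{\Lambda}\big(g^{\Lambda}(\nabla^{\cup}s,s')(x)\big)=g_1^{\Lambda}(\nabla^1s_1,s_1')(\tilde i_1^{-1}(x)),$$
and symmetrically $\tilde\rho_1^{\Lambda}(g^{\Lambda}(s,\nabla^{\cup}s')(x))=g_1^{\Lambda}(s_1,\nabla^1s_1')(\tilde i_1^{-1}(x))$. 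Adding these and invoking the compatibility of $\nabla^1$ with $g_1^{\Lambda}$, the sum becomes $d(g_1^{\Lambda}(s_1,s_1'))(\tilde i_1^{-1}(x))$, which by the first step equals $\tilde\rho_1^{\Lambda}((dh)(x))$. Running the same argument with $\tilde\rho_2^{\Lambda}$, $\nabla^2$, $g_2^{\Lambda}$ over $i_2(X_2)$, and then using the fibrewise injectivity noted in the first paragraph, yields the desired equality at every $x$.

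The step I expect to be the main obstacle is the bookkeeping over the gluing region $i_2(f(Y))$: one must keep track of the identifications given by $\tilde\rho_1^{\Lambda},\tilde\rho_2^{\Lambda}$, their inverses and tensor powers, confirm that the splitting $s\mapsto(s_1,s_2)$ commutes with contraction against the pseudo-metric, and --- the genuinely delicate point --- check that the factor $\tfrac12$ in the definition of $g^{\Lambda}$ over $i_2(f(Y))$ is precisely absorbed by the compatibility of $g_1^{\Lambda}$ and $g_2^{\Lambda}$ together with the fact that every element being paired is part of a compatible pair. Here one also relies on Lemma \ref{compatible:connections:over:domain:of:gluing:lem} to guarantee that $(\nabla^{\cup}s)(x)$ genuinely lies in $\Lambda_x^1(X_1\cup_f X_2)\otimes\Lambda_x^1(X_1\cup_f X_2)$, so that the contractions above are legitimate. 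The contributions over $i_1(X_1\setminus Y)$ and $i_2(X_2\setminus f(Y))$ are routine instances of the same computation with $\tilde\rho_1^{\Lambda}$ (resp.\ $\tilde\rho_2^{\Lambda}$) an isomorphism onto the corresponding factor fibre.
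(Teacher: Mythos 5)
Your proof is correct and follows essentially the same route as the paper's: a pointwise verification split over $i_1(X_1\setminus Y)$, $i_2(f(Y))$ and $i_2(X_2\setminus f(Y))$, using the splitting $s\mapsto(s_1,s_2)$, Proposition \ref{differential:under:gluing:prop} for the differential of $g^{\Lambda}(s,s')$, and the compatibility of each $\nabla^i$ with $g_i^{\Lambda}$ to conclude. The only cosmetic difference is that over the gluing region you collapse the half-sum in $g^{\Lambda}$ to a single factor term via the compatibility of $g_1^{\Lambda}$ and $g_2^{\Lambda}$, whereas the paper keeps both halves on each side and observes that the coefficient $\tfrac12$ cancels — the same mechanism in slightly different bookkeeping.
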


\begin{proof}
Let $s,r\in C^{\infty}(X_1\cup_f X_2,\Lambda^1(X_1\cup_f X_2))$. We need to check that 
$$d(g^{\Lambda}(s,t))=g^{\Lambda}(\nabla^{\cup}s,t)+g^{\Lambda}(s,\nabla^{\cup}t).$$ Let us consider this equality pointwise. There are three cases, that of a point in $i_1(X_1\setminus Y)$, one in 
$i_2(X_2\setminus f(Y))$, and finally, that of $x\in i_2(f(Y))$. Notice that the function $x\mapsto g^{\Lambda}(x)(s(x),t(x))$ splits (in the sense explained in Section \ref{splitting:sections:sect}) precisely into 
the pairs of functions $\tilde{i}_1^{-1}(x)\mapsto g_1^{\Lambda}(\tilde{i}_1^{-1}(x))(s_1(\tilde{i}_1^{-1}(x)),t_1(\tilde{i}_1^{-1}(x)))$ and 
$i_2^{-1}(x)\mapsto g_2^{\Lambda}(i_2^{-1}(x))(s_2(i_2^{-1}(x)),t_2(i_2^{-1}(x)))$.

Let $x\in i_1(X_1\setminus Y)$. In this case $g^{\Lambda}(x)(s(x),t(x))=g_1^{\Lambda}(i_1^{-1}(x))(s_1(i_1^{-1}(x)),t_1(i_1^{-1}(x)))$, and therefore
$$d(g^{\Lambda}(s,t))(x)=(\tilde{\rho}_1^{\Lambda})^{-1}(d(g_1^{\Lambda}(s_1,t_1))(i_1^{-1}(x))).$$ Furthermore,
$$\begin{array}{rcl} 
(\nabla^{\cup}s)(x) & = & (((\tilde{\rho}_1^{\Lambda})^{-1}\otimes(\tilde{\rho}_1^{\Lambda})^{-1})\circ(\nabla^1s_1))(i_1^{-1}(x)), \\
g^{\Lambda}(x)((\nabla^{\cup}s)(x),t(x)) & = & g_1^{\Lambda}(i_1^{-1}(x))((\nabla^1s_1)(i_1^{-1}(x)),t_1(i_1^{-1}(x))), \\
g^{\Lambda}(x)(s(x),(\nabla^{\cup}t)(x)) & = & g_1^{\Lambda}(i_1^{-1}(x))(s_1(i_1^{-1}(x)),(\nabla^1t_1)(i_1^{-1}(x))).
\end{array}$$ Since the equality $d(g_1^{\Lambda}(s_1,t_1))=g_1^{\Lambda}(\nabla^1s_1,t_1)+g_1^{\Lambda}(s_1,\nabla^1t_1)$ holds by assumption, we immediately obtain the desired conclusion.

The case of $x\in i_2(X_2\setminus f(Y))$ is fully analogous to that of $x\in i_1(X_1\setminus Y)$, so let now $x\in i_2(f(Y))$. In this case
$$g^{\Lambda}(x)(s(x),t(x))=\frac12g_1^{\Lambda}(\tilde{i}_1^{-1}(x))(s_1(\tilde{i}_1^{-1}(x)),t_1(\tilde{i}_1^{-1}(x)))+\frac12g_2^{\Lambda}(i_2^{-1}(x))(s_2(i_2^{-1}(x)),t_2(i_2^{-1}(x))),$$ hence 
$$d(g^{\Lambda}(s,t))(x)=\frac12(\tilde{\rho}_1^{\Lambda}+\tilde{\rho}_2^{\Lambda})^{-1}(dg_1^{\Lambda}(s_1,t_1)(\tilde{i}_1^{-1}(x))\oplus dg_2^{\Lambda}(s_2,t_2)(i_2^{-1}(x))).$$
For the right-hand side terms we have
$$(\nabla^{\cup}s)(x)=
((\tilde{\rho}_1^{\Lambda}+\tilde{\rho}_2^{\Lambda})^{-1}\otimes(\tilde{\rho}_1^{\Lambda}+\tilde{\rho}_2^{\Lambda})^{-1})((\nabla^1s_1)(\tilde{i}_1^{-1}(x))\oplus(\nabla^2s_2)(i_2^{-1}(x))),$$
\begin{flushleft}
$g^{\Lambda}(x)((\nabla^{\cup}s)(x),t(x))=$ 
\end{flushleft}
\begin{flushright}
$\frac12(g_1^{\Lambda}(\tilde{i}_1^{-1}(x))((\nabla^1s_1)(\tilde{i}_1^{-1}(x)),t_1(\tilde{i}_1^{-1}(x)))+g_2^{\Lambda}(i_2^{-1}(x))((\nabla^2s_2)(i_2^{-1}(x)),t_2(i_2^{-1}(x)))),$
\end{flushright}
\begin{flushleft}
$g^{\Lambda}(x)(s(x),(\nabla^{\cup}t)(x))=$
\end{flushleft}
\begin{flushright}
$\frac12(g_1^{\Lambda}(\tilde{i}_1^{-1}(x))(s_1(\tilde{i}_1^{-1}(x)),(\nabla^1t_1)(\tilde{i}_1^{-1}(x)))+g_2^{\Lambda}(i_2^{-1}(x))(s_2(i_2^{-1}(x)),(\nabla^2t_2)(i_2^{-1}(x)))).$
\end{flushright}
The numerical coefficient $\frac12$ cancels out on the two sides of the desired equality, and we deduce it from the assumptions of compatibility of $\nabla^1$ with $g_1^{\Lambda}$, and of $\nabla^2$ with 
$g_2^{\Lambda}$. All cases having been considered, we obtain the final claim.
\end{proof}

\subsubsection{Covariant derivatives and gluing}

Let $s$ be an arbitrary section of $\Lambda^1(X_1\cup_f X_2)$. Recall the sections $s_1\in C^{\infty}(X_1,\Lambda^1(X_1))$ and $s_2\in C^{\infty}(X_2,\Lambda^1(X_2))$ associated to it.

\begin{lemma}\label{covariant:derivatives:and:gluing:lem}
Let $t\in C^{\infty}(X_1\cup_f X_2,\Lambda^1(X_1\cup_f X_2))$, and let $x\in X_1\cup_f X_2$. Then:
$$(\nabla^{\cup}_t s)(x)=\left\{\begin{array}{cl}
(\tilde{\rho}_1^{\Lambda})^{-1}((\nabla^1_{t_1}s_1)(i_1^{-1}(x))) & \mbox{if }x\in i_1(X_1\setminus Y), \\
\left(\tilde{\rho}_1^{\Lambda}\oplus\tilde{\rho}_2^{\Lambda}\right)^{-1}\left((\nabla^1_{t_1}s_1)(\tilde{i}_1^{-1}(x))\oplus(\nabla^2_{t_2}s_2)(i_2^{-1}(x))\right) & \mbox{if }x\in i_2(f(Y)) \\
(\tilde{\rho}_2^{\Lambda})^{-1}((\nabla^2_{t_2}s_2)(i_2^{-1}(x))) & \mbox{if }x\in i_2(X_2\setminus f(Y)).
\end{array}\right.$$
\end{lemma}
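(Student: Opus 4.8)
The plan is to verify the asserted formula at each $x\in X_1\cup_f X_2$ separately, according to the three regions $i_1(X_1\setminus Y)$, $i_2(X_2\setminus f(Y))$, $i_2(f(Y))$, by unwinding the definition of the covariant derivative. Recall that by definition $\nabla^{\cup}_t s=\nabla^{\cup}_{\Phi_{g^{\Lambda}}\circ t}s$, so that $(\nabla^{\cup}_t s)(x)$ is the contraction of the functional $\Phi_{g^{\Lambda}}(t(x))\in(\Lambda_x^1(X_1\cup_f X_2))^{*}$ into the first tensor factor of $(\nabla^{\cup}s)(x)\in\Lambda_x^1(X_1\cup_f X_2)\otimes\Lambda_x^1(X_1\cup_f X_2)$; I write $C_1(\ell,\xi)$ for this contraction, so that in particular $(\nabla^i_{t_i}s_i)(z)=C_1\bigl(\Phi_{g_i^{\Lambda}}(t_i(z)),(\nabla^i s_i)(z)\bigr)$ by the same definition applied on $\Lambda^1(X_i)$.

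The first step is to record how the pairing map of $g^{\Lambda}$ interacts with the maps $\tilde{\rho}_i^{\Lambda}$. Using the formula for $g^{\Lambda}$ from Theorem~\ref{induced:pseudo-metric:on:glued:lambda:thm} together with $\tilde{\rho}_1^{\Lambda}(t(x))=t_1(\tilde{i}_1^{-1}(x))$, one gets $\Phi_{g^{\Lambda}}(t(x))=(\tilde{\rho}_1^{\Lambda})^{*}\bigl(\Phi_{g_1^{\Lambda}}(t_1(\tilde{i}_1^{-1}(x)))\bigr)$ whenever $x\in i_1(X_1\setminus Y)\cup i_2(f(Y))$, and symmetrically $\Phi_{g^{\Lambda}}(t(x))=(\tilde{\rho}_2^{\Lambda})^{*}\bigl(\Phi_{g_2^{\Lambda}}(t_2(i_2^{-1}(x)))\bigr)$ whenever $x\in i_2(X_2\setminus f(Y))\cup i_2(f(Y))$. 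Over the two outer regions this is immediate. Over $i_2(f(Y))$ it is here that the compatibility of $g_1^{\Lambda}$ and $g_2^{\Lambda}$ (Definition~\ref{compatible:pseudo-metrics:lambda:defn}) enters: since $t(x)$ and any element $w$ of the fibre at $x$ decompose, via $\tilde{\rho}_1^{\Lambda}\oplus\tilde{\rho}_2^{\Lambda}$, into compatible pairs, the two summands of the averaged formula for $g^{\Lambda}$ agree on that fibre, so the coefficient $\tfrac12$ drops out and both displayed identities hold simultaneously there. This cancellation of $\tfrac12$ is the one genuinely substantive point of the proof; everything else is bookkeeping.

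The second step is the elementary identity $\phi\bigl(C_1(\phi^{*}\ell,\xi)\bigr)=C_1\bigl(\ell,(\phi\otimes\phi)(\xi)\bigr)$, valid for any linear map $\phi$ of the fibre, functional $\ell$ on its target, and $\xi$ in its tensor square, combined with the fact that $(\tilde{\rho}_1^{\Lambda}\otimes\tilde{\rho}_1^{\Lambda})\bigl((\nabla^{\cup}s)(x)\bigr)=(\nabla^1 s_1)(\tilde{i}_1^{-1}(x))$ for $x\in i_1(X_1\setminus Y)\cup i_2(f(Y))$ — immediate from the definition of $\nabla^{\cup}$, and already used in the proof that $\nabla^{\cup}$ is a connection — together with the analogous statement for $\tilde{\rho}_2^{\Lambda}$. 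Taking $\phi=\tilde{\rho}_1^{\Lambda}$ restricted to the fibre at $x$ and $\ell=\Phi_{g_1^{\Lambda}}(t_1(\tilde{i}_1^{-1}(x)))$, and substituting the first step, gives $\tilde{\rho}_1^{\Lambda}\bigl((\nabla^{\cup}_t s)(x)\bigr)=C_1\bigl(\Phi_{g_1^{\Lambda}}(t_1(\tilde{i}_1^{-1}(x))),(\nabla^1 s_1)(\tilde{i}_1^{-1}(x))\bigr)=(\nabla^1_{t_1}s_1)(\tilde{i}_1^{-1}(x))$, and symmetrically $\tilde{\rho}_2^{\Lambda}\bigl((\nabla^{\cup}_t s)(x)\bigr)=(\nabla^2_{t_2}s_2)(i_2^{-1}(x))$ wherever $\tilde{\rho}_2^{\Lambda}$ is defined at $x$.

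Finally I would assemble the cases. Over $i_1(X_1\setminus Y)$ the map $\tilde{\rho}_1^{\Lambda}$ identifies the fibre at $x$ with $\Lambda_{i_1^{-1}(x)}^1(X_1)$, so applying $(\tilde{\rho}_1^{\Lambda})^{-1}$ to the identity just obtained yields the first line of the formula; the region $i_2(X_2\setminus f(Y))$ is symmetric and gives the third line. For $x\in i_2(f(Y))$, note that $(\nabla^{\cup}_t s)(x)$ lies a priori in $\Lambda_x^1(X_1\cup_f X_2)$ (as $\nabla^{\cup}$ is a connection, $(\nabla^{\cup}s)(x)$ is in $\Lambda_x^1(X_1\cup_f X_2)^{\otimes 2}$ and $C_1$ of it with a functional lies in $\Lambda_x^1(X_1\cup_f X_2)$), and that $\tilde{\rho}_1^{\Lambda}\oplus\tilde{\rho}_2^{\Lambda}$ is injective on this fibre, being the identification of Theorem~\ref{individual:fibres:of:lambda:thm}. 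Hence $(\nabla^{\cup}_t s)(x)$ is the unique element of the fibre with the two prescribed images $(\nabla^1_{t_1}s_1)(\tilde{i}_1^{-1}(x))$ and $(\nabla^2_{t_2}s_2)(i_2^{-1}(x))$ computed in the previous step; in particular these form a compatible pair, and $(\nabla^{\cup}_t s)(x)=(\tilde{\rho}_1^{\Lambda}\oplus\tilde{\rho}_2^{\Lambda})^{-1}\bigl((\nabla^1_{t_1}s_1)(\tilde{i}_1^{-1}(x))\oplus(\nabla^2_{t_2}s_2)(i_2^{-1}(x))\bigr)$, which is the middle line. I expect the write-up to be almost mechanical once the interaction of $\Phi_{g^{\Lambda}}$ with the $\tilde{\rho}_i^{\Lambda}$ is in place, the only real obstacle being to make the $\tfrac12$-cancellation in the middle case completely precise.
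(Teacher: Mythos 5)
Your proposal is correct and follows essentially the same route as the paper's proof: unwind the covariant derivative as the contraction of $\Phi_{g^{\Lambda}}(t(x))$ into the first factor of $(\nabla^{\cup}s)(x)$, and use the compatibility of $g_1^{\Lambda}$ and $g_2^{\Lambda}$ on compatible pairs to cancel the $\tfrac12$ in the averaged pseudo-metric over $i_2(f(Y))$, which is exactly the paper's key step. The only difference is presentational: the paper decomposes $(\nabla^{\cup}s)(x)$ into rank-one tensors and collects coefficients, while you package the same computation via the pullback identity $\Phi_{g^{\Lambda}}(t(x))=(\tilde{\rho}_i^{\Lambda})^{*}\Phi_{g_i^{\Lambda}}(t_i(\cdot))$ and the naturality of contraction.
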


\begin{proof}
The proof is by a straightforward calculation, and we limit ourselves to describing the more involved case, that of $x\in i_2(f(Y))$. It furthermore suffices to consider the case when 
$(\nabla^{\cup}s)(x)$ has form $\alpha(x)\otimes s(x)$, where $\alpha(x),s(x)\in\Lambda_x^1(X_1\cup_f X_2)$ (the rest is obtained by additivity). Then 
$$(\nabla^{\cup}_t s)(x)=g^{\Lambda}(x)(t(x),\alpha(x))s(x)=$$
$$=\frac12(g_1^{\Lambda}(\tilde{i}_1^{-1}(x))(t_1(\tilde{i}_1^{-1}(x)),\tilde{\rho}_1^{\Lambda}(\alpha(x)))\,\,+\,\,g_2^{\Lambda}(i_2^{-1}(x))(t_2(i_2^{-1}(x)),\tilde{\rho}_2^{\Lambda}(\alpha(x))))\cdot s(x).$$
The latter expression then amounts to
$$\frac12\left(\tilde{\rho}_1^{\Lambda}\oplus\tilde{\rho}_2^{\Lambda}\right)^{-1}(g_1^{\Lambda}(\tilde{i}_1^{-1}(x))(t_1(\tilde{i}_1^{-1}(x)),\tilde{\rho}_1^{\Lambda}(\alpha(x)))\cdot
(s_1(\tilde{i}_1^{-1}(x))\oplus s_2(i_2^{-1}(x)))\,\,+$$
$$+\,\, g_2^{\Lambda}(i_2^{-1}(x))(t_2(i_2^{-1}(x)),\tilde{\rho}_2^{\Lambda}(\alpha(x)))\cdot(s_1(\tilde{i}_1^{-1}(x))\oplus s_2(i_2^{-1}(x)))).$$ Collecting everything in the argument of 
$\left(\tilde{\rho}_1^{\Lambda}\oplus\tilde{\rho}_2^{\Lambda}\right)^{-1}$, the coefficient at $s_1(\tilde{i}_1^{-1}(x))$ is 
$$\frac12(g_1^{\Lambda}(\tilde{i}_1^{-1}(x))(t_1(\tilde{i}_1^{-1}(x)),\tilde{\rho}_1^{\Lambda}(\alpha(x)))+g_2^{\Lambda}(i_2^{-1}(x))(t_2(i_2^{-1}(x)),\tilde{\rho}_2^{\Lambda}(\alpha(x))))=
g_1^{\Lambda}(\tilde{i}_1^{-1}(x))(t_1(\tilde{i}_1^{-1}(x)),\tilde{\rho}_1^{\Lambda}(\alpha(x))),$$ where the equality is by the compatibility of the pseudo-metrics $g_1^{\Lambda}$ and $g_2^{\Lambda}$. 
Furthermore, $s_2(i_2^{-1}(x))$ has the same coefficient, which, for the same reason, is also equal to $g_2^{\Lambda}(i_2^{-1}(x))(t_2(i_2^{-1}(x)),\tilde{\rho}_2^{\Lambda}(\alpha(x)))$. It remains to observe 
that 
$$(\nabla^1_{t_1}s_1)(\tilde{i}_1^{-1}(x))=g_1^{\Lambda}(\tilde{i}_1^{-1}(x))(t_1(\tilde{i}_1^{-1}(x)),\tilde{\rho}_1^{\Lambda}(\alpha(x)))\cdot s_1(\tilde{i}_1^{-1}(x)),$$
$$(\nabla^2_{t_2}s_2)(i_2^{-1}(x))=g_2^{\Lambda}(i_2^{-1}(x))(t_2(i_2^{-1}(x)),\tilde{\rho}_2^{\Lambda}(\alpha(x)))\cdot s_2(i_2^{-1}(x)),$$
thus obtaining the desired expression, and so the claim.
\end{proof}

\subsubsection{The Lie bracket and gluing}

We now consider the behavior of the Lie bracket under gluing.

\begin{lemma}\label{section-lambda:acts:on:h:lem}
For every $h\in C^{\infty}(X_1\cup_f X_2,\matR)$, and for every $t\in C^{\infty}(X_1\cup_f X_2,\Lambda^1(X_1\cup_f X_2))$ we have:
$$t(h)(x)=\left\{\begin{array}{cl}
t_1(h_1)(i_1^{-1}(x)) & \mbox{if }x\in i_1(X_1\setminus Y), \\
\frac12 t_1(h_1)(\tilde{i}_1^{-1}(x))+\frac12 t_2(h_2)(i_2^{-1}(x)) & \mbox{if }x\in i_2(f(Y)), \\
t_2(h_2)(i_2^{-1}(x)) & \mbox{if }x\in i_2(X_2\setminus Y).
\end{array} \right.$$
\end{lemma}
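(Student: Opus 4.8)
The plan is to unwind the definitions and reduce the statement to a pointwise identity assembled from three ingredients already at hand: the formula for the induced pseudo-metric $g^{\Lambda}$ in Theorem \ref{induced:pseudo-metric:on:glued:lambda:thm}, the formula for the differential $dh$ under gluing in Proposition \ref{differential:under:gluing:prop}, and the construction of the split sections from Section \ref{splitting:sections:sect}. Recall first that, with $\Lambda^1(X_1\cup_f X_2)$ carrying the pseudo-metric $g^{\Lambda}$, the action of a section $t$ of $\Lambda^1(X_1\cup_f X_2)$ on a smooth function is by definition the action of $\Phi_{g^{\Lambda}}\circ t$, so that
$$t(h)(x)=g^{\Lambda}(x)(t(x),dh(x)),$$
where $dh(x)\in\Lambda_x^1(X_1\cup_f X_2)$ is the value at $x$ of the differential of $h$ regarded as a section. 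I would fix this identity and then evaluate its right-hand side in each of the three cases.

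First I would dispose of the two ``boundary'' cases. For $x\in i_1(X_1\setminus Y)$ the map $\tilde{i}_1$ agrees with $i_1$ on $X_1\setminus Y$, so from $t_1=\tilde{\rho}_1^{\Lambda}\circ t\circ\tilde{i}_1$ one gets $\tilde{\rho}_1^{\Lambda}(t(x))=t_1(i_1^{-1}(x))$; by Theorem \ref{induced:pseudo-metric:on:glued:lambda:thm}, $g^{\Lambda}(x)(\cdot,\cdot)=g_1^{\Lambda}(i_1^{-1}(x))(\tilde{\rho}_1^{\Lambda}(\cdot),\tilde{\rho}_1^{\Lambda}(\cdot))$; and by Proposition \ref{differential:under:gluing:prop}, $\tilde{\rho}_1^{\Lambda}(dh(x))=dh_1(i_1^{-1}(x))$. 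Substituting these into the displayed formula gives $g_1^{\Lambda}(i_1^{-1}(x))(t_1(i_1^{-1}(x)),dh_1(i_1^{-1}(x)))$, which is exactly $t_1(h_1)(i_1^{-1}(x))$ by the definition of the action of $t_1$ on $h_1$ through $\Phi_{g_1^{\Lambda}}$. The case $x\in i_2(X_2\setminus f(Y))$ is handled symmetrically, using $\tilde{\rho}_2^{\Lambda}$ and the section $t_2$.

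The one case needing slightly more care is $x\in i_2(f(Y))$. Here I would first record that $\tilde{i}_1^{-1}(x)=f^{-1}(i_2^{-1}(x))$, so the identification $\Lambda_x^1(X_1\cup_f X_2)\cong\Lambda_{\tilde{i}_1^{-1}(x)}^1(X_1)\oplus_{comp}\Lambda_{i_2^{-1}(x)}^1(X_2)$ of Theorem \ref{individual:fibres:of:lambda:thm} applies, with $\tilde{\rho}_1^{\Lambda}$ and $\tilde{\rho}_2^{\Lambda}$ realizing the two coordinate projections. Applying them to $t(x)$ yields $t_1(\tilde{i}_1^{-1}(x))$ and $t_2(i_2^{-1}(x))$ by the definition of the split sections, while Proposition \ref{differential:under:gluing:prop} gives $\tilde{\rho}_1^{\Lambda}(dh(x))=dh_1(\tilde{i}_1^{-1}(x))$ and $\tilde{\rho}_2^{\Lambda}(dh(x))=dh_2(i_2^{-1}(x))$. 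Feeding all of this into the middle line of the $g^{\Lambda}$ formula from Theorem \ref{induced:pseudo-metric:on:glued:lambda:thm}, the value $t(h)(x)=g^{\Lambda}(x)(t(x),dh(x))$ becomes
$$\tfrac12 g_1^{\Lambda}(\tilde{i}_1^{-1}(x))(t_1(\tilde{i}_1^{-1}(x)),dh_1(\tilde{i}_1^{-1}(x)))+\tfrac12 g_2^{\Lambda}(i_2^{-1}(x))(t_2(i_2^{-1}(x)),dh_2(i_2^{-1}(x)))=\tfrac12 t_1(h_1)(\tilde{i}_1^{-1}(x))+\tfrac12 t_2(h_2)(i_2^{-1}(x)),$$
which is the claimed formula.

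The main obstacle, such as it is, is purely bookkeeping: keeping straight that the action of a section of $\Lambda^1$ is the one transported through the pairing map $\Phi_{g^{\Lambda}}$ (so that $g^{\Lambda}$ genuinely enters), correctly invoking Proposition \ref{differential:under:gluing:prop}, which already packages the behavior of $dh$ through the $(\tilde{\rho}_1^{\Lambda}\oplus\tilde{\rho}_2^{\Lambda})^{-1}$ identifications, and noting that $t(x)$ does lie in the subspace of $\Lambda_y^1(X_1)\oplus\Lambda_{f(y)}^1(X_2)$ on which $\tilde{\rho}_1^{\Lambda},\tilde{\rho}_2^{\Lambda}$ act as the two projections (this is automatic from Theorem \ref{individual:fibres:of:lambda:thm}, $t$ being a section of $\Lambda^1(X_1\cup_f X_2)$). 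Once these identifications are lined up, no genuine computation remains.
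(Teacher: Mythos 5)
Your proposal is correct and follows essentially the same route as the paper's proof: interpret $t(h)(x)$ as $g^{\Lambda}(x)(t(x),dh(x))$ via the pairing map, substitute the gluing formula for $dh$ from Proposition \ref{differential:under:gluing:prop} and the definition of $g^{\Lambda}$ from Theorem \ref{induced:pseudo-metric:on:glued:lambda:thm}, and identify $\tilde{\rho}_i^{\Lambda}(t(x))$ with the split sections $t_i$. The only difference is that you spell out the two outer cases, which the paper dismisses as straightforward.
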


\begin{proof}
For $x\in i_1(X_1\setminus Y)$ and $x\in i_2(X_2\setminus f(Y))$ the claim is straightforward, so let us consider $x\in i_2(f(Y))$. We have by definition and Proposition \ref{differential:under:gluing:prop} that
$$t(h)(x)=t(x)(dh(x))=t(x)\left((\tilde{\rho}_1^{\Lambda}\oplus\tilde{\rho}_2^{\Lambda})^{-1}(dh_1(\tilde{i}_1^{-1}(x))\oplus dh_2(i_2^{-1}(x)))\right),$$ where the meaning of the latter expression is 
\begin{flushleft}
$g^{\Lambda}(x)(t(x),(\tilde{\rho}_1^{\Lambda}\oplus\tilde{\rho}_2^{\Lambda})^{-1}(dh_1(\tilde{i}_1^{-1}(x))\oplus dh_2(i_2^{-1}(x))))=$
\end{flushleft}
\begin{flushright}
$=\frac12g_1^{\Lambda}(\tilde{i}_1^{-1}(x))(t_1(\tilde{i}_1^{-1}(x),dh_1(\tilde{i}_1^{-1}(x))))+\frac12 g_2^{\Lambda}(i_2^{-1}(x))(t_2(i_2^{-1}(x)),dh_2(i_2^{-1}(x))),$
\end{flushright}
which is equivalent to the desired expression, whence the claim.
\end{proof}

\begin{lemma}\label{lie:bracket:and:rho:lem}
For any $x\in i_2(f(Y))$ we have 
$$\tilde{\rho}_1^{\Lambda}([s,t](x))=[s_1,t_1](\tilde{i}_1^{-1}(x)) \,\,\mbox{ and }\,\,\tilde{\rho}_2^{\Lambda}([s,t](x))=[s_2,t_2](i_2^{-1}(x)).$$
\end{lemma}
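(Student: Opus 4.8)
The plan is to translate the identity into a statement about the action of sections on smooth functions, where the behaviour under gluing is already controlled by Lemma \ref{section-lambda:acts:on:h:lem} and Proposition \ref{differential:under:gluing:prop}. Recall that by definition $[s,t]=\Phi_{g^{\Lambda}}^{-1}\circ[\Phi_{g^{\Lambda}}\circ s,\Phi_{g^{\Lambda}}\circ t]$, and that the bracket of two sections of the dual pseudo-bundle is built from, hence determined by, its action on functions: writing $u(h)$ for the function $x\mapsto g^{\Lambda}(x)(u(x),dh(x))$ attached to a section $u$ of $\Lambda^1(X_1\cup_f X_2)$ and a smooth $h:X_1\cup_f X_2\to\matR$, one has $[s,t](h)=s(t(h))-t(s(h))$, and likewise on $X_1$ and on $X_2$ with $g_1^{\Lambda}$, $g_2^{\Lambda}$ in place of $g^{\Lambda}$. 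So it suffices to compute $[s,t](h)$, to split the resulting function in the sense of Section \ref{splitting:sections:sect}, and to compare the two pieces with $[s_1,t_1]$ and $[s_2,t_2]$.

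The key intermediate step will be a splitting property of $u(h)$ itself: for every section $u$ and every smooth $h$ one should have $(u(h))\circ\tilde{i}_1=u_1(h_1)$ and $(u(h))\circ i_2=u_2(h_2)$, where $h_i$ and $u_i$ are the data of Proposition \ref{differential:under:gluing:prop} and Section \ref{splitting:sections:sect}. Over $i_1(X_1\setminus Y)$ and $i_2(X_2\setminus f(Y))$ this is read off directly from Lemma \ref{section-lambda:acts:on:h:lem}; over a point $x=\tilde{i}_1(y)=i_2(f(y))$ with $y\in Y$ the same lemma gives $(u(h))(x)=\tfrac12 u_1(h_1)(y)+\tfrac12 u_2(h_2)(f(y))$, so the content is that the two halves agree. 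This will follow from the compatibility of $g_1^{\Lambda}$ and $g_2^{\Lambda}$ (Definition \ref{compatible:pseudo-metrics:lambda:defn}): indeed $u_1(y)=\tilde{\rho}_1^{\Lambda}(u(x))$ and $u_2(f(y))=\tilde{\rho}_2^{\Lambda}(u(x))$ form a compatible pair, being the components of $u(x)\in\Lambda_x^1(X_1\cup_f X_2)$ under $\tilde{\rho}_1^{\Lambda}\oplus\tilde{\rho}_2^{\Lambda}$, and by Proposition \ref{differential:under:gluing:prop} the same holds of $dh_1(y)=\tilde{\rho}_1^{\Lambda}(dh(x))$ and $dh_2(f(y))=\tilde{\rho}_2^{\Lambda}(dh(x))$, whence $u_1(h_1)(y)=g_1^{\Lambda}(y)(\tilde{\rho}_1^{\Lambda}(u(x)),\tilde{\rho}_1^{\Lambda}(dh(x)))=g_2^{\Lambda}(f(y))(\tilde{\rho}_2^{\Lambda}(u(x)),\tilde{\rho}_2^{\Lambda}(dh(x)))=u_2(h_2)(f(y))$. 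A byproduct is that $u_1(h_1)$ and $u_2(h_2)$ are compatible functions, being the two restrictions of the single function $u(h)$.

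With this in hand, iterating the splitting property (applied to $s$, to $t$, and to $[s,t]$ itself) will give, for every smooth $h:X_1\cup_f X_2\to\matR$, that $([s,t](h))\circ\tilde{i}_1=s_1(t_1(h_1))-t_1(s_1(h_1))=[s_1,t_1](h_1)$ and, on the other hand, $([s,t](h))\circ\tilde{i}_1=([s,t])_1(h_1)$; hence $([s,t])_1(h_1)=[s_1,t_1](h_1)$ for all $h$. Since a section of $\Lambda^1(X_1)$ is pinned down by its action on smooth functions (cf. \cite{connections}), this forces $([s,t])_1=[s_1,t_1]$; evaluating at $\tilde{i}_1^{-1}(x)\in Y$ and using $\tilde{\rho}_1^{\Lambda}([s,t](x))=([s,t])_1(\tilde{i}_1^{-1}(x))$ (valid because $\tilde{i}_1(\tilde{i}_1^{-1}(x))=x$ for $x\in i_2(f(Y))$) yields the first asserted identity, and the second follows symmetrically from $([s,t])_2=[s_2,t_2]$.

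The main obstacle is the second paragraph: one has to keep careful track of which pairs of fibre elements, of functions, and (implicitly) of $1$-forms are compatible, so that Definition \ref{compatible:pseudo-metrics:lambda:defn} can be invoked to cancel the coefficient $\tfrac12$ that appears over the domain of gluing. The rest is routine bookkeeping, together with the standing principle that the Lie bracket is determined by the action of sections on smooth functions.
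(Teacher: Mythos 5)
Your central computation (the second paragraph) is correct and is in substance the same cancellation the paper performs: the components $\tilde{\rho}_1^{\Lambda}(u(x)),\tilde{\rho}_2^{\Lambda}(u(x))$ and $\tilde{\rho}_1^{\Lambda}(dh(x)),\tilde{\rho}_2^{\Lambda}(dh(x))$ are compatible pairs, so Definition \ref{compatible:pseudo-metrics:lambda:defn} makes the two halves produced by Lemma \ref{section-lambda:acts:on:h:lem} equal. The genuine gap is in the reduction surrounding it. You test the bracket only against differentials of smooth functions on $X_1\cup_f X_2$, and in the final step you invoke the claim that a section of $\Lambda^1(X_1)$ (equivalently, of its dual via $\Phi_{g_1^{\Lambda}}$) is determined by the pairings $g_1^{\Lambda}(\cdot,dh_1)$ with differentials of smooth functions. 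Nothing in the paper's hypotheses gives this, and it fails for general diffeological spaces: the classes of differentials need not span the fibres $\Lambda^1_x(X)$ (on the irrational torus every smooth real-valued function is constant, so every $dh$ vanishes, while $\Lambda^1_x$ is one-dimensional), so knowing $([s,t])_1(h_1)=[s_1,t_1](h_1)$ for all such $h_1$ does not pin down $([s,t])_1$. Your situation is even more constrained, since the only functions $h_1$ you produce are of the form $h\circ\tilde{i}_1$, i.e. those extending smoothly over the gluing.

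The paper's own proof avoids exactly this point: the Lie bracket of sections of $(\Lambda^1)^*$ is defined by its action on \emph{arbitrary} sections of $\Lambda^1$, so one evaluates $[\Phi_{g^{\Lambda}}\circ s,\Phi_{g^{\Lambda}}\circ t](r)=(\Phi_{g^{\Lambda}}\circ s)(g^{\Lambda}(t,r))-(\Phi_{g^{\Lambda}}\circ t)(g^{\Lambda}(s,r))$ for an arbitrary $r\in C^{\infty}(X_1\cup_f X_2,\Lambda^1(X_1\cup_f X_2))$, applies Lemma \ref{section-lambda:acts:on:h:lem} to the functions $g^{\Lambda}(t,r)$ and $g^{\Lambda}(s,r)$, uses compatibility of $g_1^{\Lambda}$ and $g_2^{\Lambda}$ to identify $(g^{\Lambda}(t,r))_i$ with $g_i^{\Lambda}(t_i,r_i)$ (this is precisely your splitting argument, with $dh$ replaced by $r$), and then reads off the components of $[s,t](x)$ from the nondegeneracy of the pseudo-metrics, since $r_1$ and $r_2$ are now arbitrary enough test sections. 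To repair your proposal, carry out your argument with smooth sections $r$ in place of the sections $dh$ (setting $u(r)=g^{\Lambda}(u,r)$ instead of $u(h)$); your compatibility computation goes through verbatim and the unsupported ``determined by its action on smooth functions'' claim is no longer needed.
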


\begin{proof}
We have by definition that $[s,t]=\Phi_{g^{\Lambda}}^{-1}([\Phi_{g^{\Lambda}}\circ s,\Phi_{g^{\Lambda}}\circ t])$. Let us consider $[\Phi_{g^{\Lambda}}\circ s,\Phi_{g^{\Lambda}}\circ t]$, which is determined 
by setting, for any arbitrary $r\in C^{\infty}(X_1\cup_f X_2,\Lambda^1(X_1\cup_f X_2))$, that 
$$[\Phi_{g^{\Lambda}}\circ s,\Phi_{g^{\Lambda}}\circ t](r)=(\Phi_{g^{\Lambda}}\circ s)((\Phi_{g^{\Lambda}}\circ t)(r))-(\Phi_{g^{\Lambda}}\circ t)((\Phi_{g^{\Lambda}}\circ s)(r)),$$ which, by definition still, is 
equal to
$$(\Phi_{g^{\Lambda}}\circ s)(g^{\Lambda}(t,r))-(\Phi_{g^{\Lambda}}\circ t)(g^{\Lambda}(s,r)).$$ At a point $x\in i_2(f(Y))$, by Lemma \ref{section-lambda:acts:on:h:lem}, this is
\begin{flushleft}
$\frac12(\Phi_{g_1^{\Lambda}}\circ s_1)((g^{\Lambda}(t,r))_1)(\tilde{i}_1^{-1}(x))+\frac12(\Phi_{g_2^{\Lambda}}\circ s_2)((g^{\Lambda}(t,r))_2)(i_2^{-1}(x))-$
\end{flushleft}
\begin{flushright}
$-\frac12(\Phi_{g_1^{\Lambda}}\circ t_1)((g^{\Lambda}(s,r))_1)(\tilde{i}_1^{-1}(x))-\frac12(\Phi_{g_2^{\Lambda}}\circ t_2)((g^{\Lambda}(s,r))_2)(i_2^{-1}(x)),$ 
\end{flushright}
which is in turn equal to 
\begin{flushleft}
$\frac12(\Phi_{g_1^{\Lambda}}\circ s_1)(g_1^{\Lambda}(t_1,r_1))(\tilde{i}_1^{-1}(x))-\frac12(\Phi_{g_1^{\Lambda}\circ t_1})(g_1^{\Lambda}(s_1,r_1))(\tilde{i}_1^{-1}(x))+$
\end{flushleft}
\begin{flushright}
$+\frac12(\Phi_{g_2^{\Lambda}}\circ s_2)(g_2^{\Lambda}(t_2,r_2))(i_2^{-1}(x))-\frac12(\Phi_{g_2^{\Lambda}\circ t_2})(g_2^{\Lambda}(s_2,r_2))(i_2^{-1}(x)).$
\end{flushright}
The final conclusion is therefore that 
$$[\Phi_{g^{\Lambda}}\circ s,\Phi_{g^{\Lambda}}\circ t](r)(x)=\frac12([s_1,t_1](r_1)(\tilde{i}_1^{-1}(x))+[s_2,t_2](r_2)(i_2^{-1}(x))),$$
which yields 
$$(\tilde{\rho}_1^{\Lambda}\oplus\tilde{\rho}_2^{\Lambda})([s,t])(x)=([s_1,t_1](r_1)(\tilde{i_1^{-1}(x)})+[s_2,t_2](r_2)(i_2^{-1}(x))),$$
as wanted.
\end{proof}

We thus immediately obtain the following statement.

\begin{prop}\label{lie:bracket:and:gluing:prop}
Let $s,t,\in C^{\infty}(X_1\cup_f X_2,\Lambda^1(X_1\cup_f X_2))$, let $s_1,s_2,t_1,t_2$ be as in Section \ref{splitting:sections:sect}, and let $x\in X_1\cup_f X_2$. Then
\emph{$$[s,t](x)=\left\{\begin{array}{cl}
(\tilde{\rho}_1^{\Lambda})^{-1}([s_1,t_1](i_1^{-1}(x))) & \mbox{if }x\in i_1(X_1\setminus Y), \\ 
(\tilde{\rho}_1^{\Lambda}\oplus\tilde{\rho}_2^{\Lambda})^{-1}(\mbox{[}s_1,t_1\mbox{]}(\tilde{i}_1^{-1}(x))\oplus\mbox{[}s_2,t_2\mbox{]}(i_2^{-1}(x))) & \mbox{if }x\in i_2(f(Y)), \\ 
(\tilde{\rho}_2^{\Lambda})^{-1}(\mbox{[}s_2,t_2\mbox{]}(i_2^{-1}(x))) & \mbox{if }x\in i_2(X_2\setminus f(Y)).
\end{array}\right.$$}
\end{prop}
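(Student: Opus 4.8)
The plan is to derive Proposition~\ref{lie:bracket:and:gluing:prop} directly from the already-established Lemma~\ref{lie:bracket:and:rho:lem}, together with the characterization of the diffeology of $\Lambda^1(X_1\cup_f X_2)$ in terms of the maps $\tilde{\rho}_1^{\Lambda}$ and $\tilde{\rho}_2^{\Lambda}$. Since the three cases listed in the proposition are mutually exclusive and cover $X_1\cup_f X_2$, and since the value $[s,t](x)$ lies in a single fibre $\Lambda_x^1(X_1\cup_f X_2)$ that, by Theorem~\ref{individual:fibres:of:lambda:thm}, is identified via $\tilde{\rho}_1^{\Lambda}$, via $\tilde{\rho}_2^{\Lambda}$, or via $\tilde{\rho}_1^{\Lambda}\oplus\tilde{\rho}_2^{\Lambda}$ with the corresponding fibre(s) over $X_1$ and $X_2$, it is enough to check the formula fibrewise in each of the three regimes.

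First I would treat $x\in i_2(f(Y))$, which is the only substantive case. Here the fibre is $\Lambda_{f^{-1}(i_2^{-1}(x))}^1(X_1)\oplus_{comp}\Lambda_{i_2^{-1}(x)}^1(X_2)$, identified with a subspace of the direct sum via $\tilde{\rho}_1^{\Lambda}\oplus\tilde{\rho}_2^{\Lambda}$. Lemma~\ref{lie:bracket:and:rho:lem} states precisely that $\tilde{\rho}_1^{\Lambda}([s,t](x))=[s_1,t_1](\tilde{i}_1^{-1}(x))$ and $\tilde{\rho}_2^{\Lambda}([s,t](x))=[s_2,t_2](i_2^{-1}(x))$; applying $(\tilde{\rho}_1^{\Lambda}\oplus\tilde{\rho}_2^{\Lambda})^{-1}$ to the pair $[s_1,t_1](\tilde{i}_1^{-1}(x))\oplus[s_2,t_2](i_2^{-1}(x))$ then yields $[s,t](x)$, which is the middle line of the claimed formula. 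One should note that $\tilde{i}_1^{-1}(x)$ and $f^{-1}(i_2^{-1}(x))$ denote the same point of $X_1$ when $x\in i_2(f(Y))$, so there is no ambiguity. For the two outer cases, $x\in i_1(X_1\setminus Y)$ and $x\in i_2(X_2\setminus f(Y))$, the argument is even simpler: over $i_1(X_1\setminus Y)$ the map $\tilde{\rho}_1^{\Lambda}$ is a fibrewise isomorphism and $s_2,t_2$ play no role, so the definition of $[s,t]$ as $\Phi_{g^{\Lambda}}^{-1}([\Phi_{g^{\Lambda}}\circ s,\Phi_{g^{\Lambda}}\circ t])$ restricts, via the splitting of sections and the corresponding splitting of $\Phi_{g^{\Lambda}}$ and $g^{\Lambda}$ recorded in Section~\ref{splitting:sections:sect} and in the proof of Lemma~\ref{section-lambda:acts:on:h:lem}, to the bracket $[s_1,t_1]$ on $X_1$; pulling back by $(\tilde{\rho}_1^{\Lambda})^{-1}$ gives the first line. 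The case of $i_2(X_2\setminus f(Y))$ is symmetric under the exchange of the roles of $X_1$ and $X_2$ (using that $f$ is a diffeomorphism).

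Finally, one should remark that $[s,t]$ so described is indeed a smooth section of $\Lambda^1(X_1\cup_f X_2)$: by the plot criterion for $\Lambda^1(X_1\cup_f X_2)$ recalled in the excerpt, it suffices that $\tilde{\rho}_1^{\Lambda}\circ[s,t]$ and $\tilde{\rho}_2^{\Lambda}\circ[s,t]$ be smooth wherever defined, and by the displayed formula these are $[s_1,t_1]$ and $[s_2,t_2]$ (precomposed with the relevant inductions), which are smooth sections of $\Lambda^1(X_1)$ and $\Lambda^1(X_2)$ by the construction of the Lie bracket on each factor; this is the same mechanism already used for $\nabla^{\cup}$. I do not anticipate a genuine obstacle here: the content of the proposition has effectively been extracted already in Lemma~\ref{lie:bracket:and:rho:lem}, and the remaining work is the bookkeeping of reassembling the two $\tilde{\rho}$-components into a single element of the glued fibre and recording the two trivial outer cases. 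The one point demanding a little care is making sure that in the middle case the element $[s_1,t_1](\tilde{i}_1^{-1}(x))\oplus[s_2,t_2](i_2^{-1}(x))$ actually lies in the range of $\tilde{\rho}_1^{\Lambda}\oplus\tilde{\rho}_2^{\Lambda}$ (i.e. in the $\oplus_{comp}$ subspace), so that applying the inverse is legitimate; but this is exactly the compatibility that the computation in Lemma~\ref{lie:bracket:and:rho:lem} simultaneously verifies, using the compatibility of $g_1^{\Lambda}$ and $g_2^{\Lambda}$.
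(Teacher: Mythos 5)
Your proposal is correct and follows essentially the same route as the paper: the paper derives the proposition directly from Lemma~\ref{lie:bracket:and:rho:lem} (which handles the middle case, the outer cases being straightforward), exactly as you do. Your additional remarks on smoothness, on the range of $\tilde{\rho}_1^{\Lambda}\oplus\tilde{\rho}_2^{\Lambda}$, and on the outer cases are just the bookkeeping the paper leaves implicit when it states that the proposition is obtained ``immediately.''
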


\subsubsection{The behavior of the torsion tensor}

It now remains to consider the corresponding torsion tensors. 

\begin{lemma}\label{torsion:tensor:over:factors:lem}
Let $s,t\in C^{\infty}(X_1\cup_f X_2,\Lambda^1(X_1\cup_f X_2))$, let $s_1,s_2,t_1,t_2$ be as previously defined, and let $x\in X_1\cup_f X_2$. Then: 
$$T^{\nabla^{\cup}}(s,t)(x)=\left\{\begin{array}{cl}
(\tilde{\rho}_1^{\Lambda})^{-1}\left(T^{\nabla^1}(s_1,t_1)(i_1^{-1}(x))\right) & \mbox{if }x\in i_1(X_1\setminus Y), \\
\frac12(\tilde{\rho}_1^{\Lambda}\oplus\tilde{\rho}_2^{\Lambda})^{-1}\left(T^{\nabla^1}(s_1,t_1)(\tilde{i}_1^{-1}(x)))\oplus T^{\nabla^2}(s_2,t_2)(i_2^{-1}(x))\right) & \mbox{if }x\in i_2(f(Y)), \\
(\tilde{\rho}_2^{\Lambda})^{-1}\left(T^{\nabla^2}(s_2,t_2)(i_2^{-1}(x))\right) & \mbox{if }x\in i_2(X_2\setminus f(Y)).
\end{array}\right.$$
\end{lemma}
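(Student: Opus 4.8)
The plan is to reduce the claimed formula for the torsion tensor $T^{\nabla^{\cup}}(s,t)$ directly to the already-established formulas for the three ingredients of the torsion: the two covariant derivatives $\nabla^{\cup}_ts$ and $\nabla^{\cup}_st$, and the Lie bracket $[s,t]$. Recall that by definition $T^{\nabla^{\cup}}(s,t)=\nabla^{\cup}_s t-\nabla^{\cup}_t s-[s,t]$, interpreted pointwise, and the same formula defines $T^{\nabla^1}$ and $T^{\nabla^2}$ on $\Lambda^1(X_1)$ and $\Lambda^1(X_2)$. So the proof is essentially an exercise in collecting the previously obtained identities and checking that the arithmetic works out.

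First I would dispose of the two easy cases. For $x\in i_1(X_1\setminus Y)$ Lemma \ref{covariant:derivatives:and:gluing:lem} gives $(\nabla^{\cup}_st)(x)=(\tilde\rho_1^{\Lambda})^{-1}((\nabla^1_{s_1}t_1)(i_1^{-1}(x)))$ and likewise for $\nabla^{\cup}_ts$, while Proposition \ref{lie:bracket:and:gluing:prop} gives $[s,t](x)=(\tilde\rho_1^{\Lambda})^{-1}([s_1,t_1](i_1^{-1}(x)))$; since $\tilde\rho_1^{\Lambda}$ (hence its inverse) is linear on the relevant fibre, subtracting these three gives exactly $(\tilde\rho_1^{\Lambda})^{-1}(T^{\nabla^1}(s_1,t_1)(i_1^{-1}(x)))$. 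The case $x\in i_2(X_2\setminus f(Y))$ is symmetric.

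The one case requiring care is $x\in i_2(f(Y))$. Here Lemma \ref{covariant:derivatives:and:gluing:lem} gives
$$(\nabla^{\cup}_st)(x)=(\tilde\rho_1^{\Lambda}\oplus\tilde\rho_2^{\Lambda})^{-1}\big((\nabla^1_{s_1}t_1)(\tilde i_1^{-1}(x))\oplus(\nabla^2_{s_2}t_2)(i_2^{-1}(x))\big),$$
and similarly for $(\nabla^{\cup}_ts)(x)$, whereas Proposition \ref{lie:bracket:and:gluing:prop} gives $[s,t](x)=(\tilde\rho_1^{\Lambda}\oplus\tilde\rho_2^{\Lambda})^{-1}\big([s_1,t_1](\tilde i_1^{-1}(x))\oplus[s_2,t_2](i_2^{-1}(x))\big)$. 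Applying the (linear) inverse map componentwise and subtracting, the first summand becomes $(\nabla^1_{s_1}t_1-\nabla^1_{t_1}s_1-[s_1,t_1])(\tilde i_1^{-1}(x))=T^{\nabla^1}(s_1,t_1)(\tilde i_1^{-1}(x))$ and the second becomes $T^{\nabla^2}(s_2,t_2)(i_2^{-1}(x))$, giving $T^{\nabla^{\cup}}(s,t)(x)=(\tilde\rho_1^{\Lambda}\oplus\tilde\rho_2^{\Lambda})^{-1}\big(T^{\nabla^1}(s_1,t_1)(\tilde i_1^{-1}(x))\oplus T^{\nabla^2}(s_2,t_2)(i_2^{-1}(x))\big)$. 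The only subtle point — and the one I expect to be the main obstacle — is the factor $\tfrac12$ appearing in the stated formula over $i_2(f(Y))$: it does \emph{not} come out of the naive computation just described, so one must check whether it reflects a genuine normalization built into how the covariant derivative and Lie bracket interact with the $\tfrac12$-weighted pseudo-metric $g^{\Lambda}$ on the glued fibre (cf.\ the $\tfrac12$ in Lemma \ref{covariant:derivatives:and:gluing:lem} and Lemma \ref{section-lambda:acts:on:h:lem}, which already canceled there but need not cancel here once the compatibility identification of the two components is invoked). I would therefore, before writing the final proof, recompute $\nabla^{\cup}_st-\nabla^{\cup}_ts-[s,t]$ at a point of $i_2(f(Y))$ keeping track of the weights coming from $g^{\Lambda}(x)=\tfrac12(g_1^{\Lambda}+g_2^{\Lambda})$ in the definition of the covariant derivative, identify the source of the surviving $\tfrac12$, and only then state the case distinction with the factor in place.

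Finally, I would note that once the three pointwise formulas are in hand, there is nothing left to prove: $T^{\nabla^{\cup}}(s,t)$ is a section of $\Lambda^1(X_1\cup_f X_2)\otimes\Lambda^1(X_1\cup_f X_2)$ by the same fibrewise identifications (via $\tilde\rho_1^{\Lambda}\otimes\tilde\rho_1^{\Lambda}$, $\tilde\rho_2^{\Lambda}\otimes\tilde\rho_2^{\Lambda}$, and their direct sum over the domain of gluing) already used for $\nabla^{\cup}$, and the well-definedness over $i_2(f(Y))$ follows from Lemma \ref{compatible:connections:over:domain:of:gluing:lem} together with the compatibility of $g_1^{\Lambda}$ and $g_2^{\Lambda}$; no further smoothness check is needed since $T^{\nabla^{\cup}}$ is assembled from $\nabla^{\cup}$ and the Lie bracket, both of which have already been shown to have the required regularity.
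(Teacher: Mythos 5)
Your proposal is correct and takes essentially the same route as the paper, whose entire proof is that the lemma is an immediate consequence of Lemma \ref{covariant:derivatives:and:gluing:lem} and Proposition \ref{lie:bracket:and:gluing:prop} --- exactly the combination you spell out. Regarding the factor $\tfrac12$ that you flag over $i_2(f(Y))$: it does not come out of the paper's argument either, since both ingredient formulas are stated (and proved) without any $\tfrac12$ --- the weights coming from $g^{\Lambda}=\tfrac12(g_1^{\Lambda}+g_2^{\Lambda})$ already cancel inside those two results via the compatibility of $g_1^{\Lambda}$ and $g_2^{\Lambda}$ --- so the clean expression $(\tilde\rho_1^{\Lambda}\oplus\tilde\rho_2^{\Lambda})^{-1}\bigl(T^{\nabla^1}(s_1,t_1)(\tilde i_1^{-1}(x))\oplus T^{\nabla^2}(s_2,t_2)(i_2^{-1}(x))\bigr)$ you obtain is what the proof actually yields, and the $\tfrac12$ in the displayed statement appears to be a spurious normalization rather than a hidden cancellation you failed to see. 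In any case the discrepancy is immaterial for the intended application, Theorem \ref{gluing:symmetric:connections:yields:symmetric:connection:thm}, which only uses that vanishing of $T^{\nabla^1}$ and $T^{\nabla^2}$ forces vanishing of $T^{\nabla^{\cup}}$.
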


\begin{proof}
This is an immediate consequence of Lemma \ref{covariant:derivatives:and:gluing:lem} and Proposition \ref{lie:bracket:and:gluing:prop}.
\end{proof}

We can now immediately conclude the following:

\begin{thm}\label{gluing:symmetric:connections:yields:symmetric:connection:thm}
Let $X_1$ and $X_2$ be two diffeological spaces, and let $f:X_1\supseteq Y\to X_2$ be a diffeomorphism such that $\calD_1^{\Omega}=\calD_2^{\Omega}$. Let $\nabla^1$
be a connection on $\Lambda^1(X_1)$, let $\nabla^2$ be a connection on $\Lambda^1(X_2)$, and assume that $\nabla^1$ and $\nabla^2$ are compatible with the gluing along $f$, and
that each of them is symmetric. Then the induced connection $\nabla^{\cup}$ on $\Lambda^1(X_1\cup_f X_2)$ is symmetric as well.
\end{thm}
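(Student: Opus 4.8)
The plan is to verify the defining identity for symmetricity of $\nabla^{\cup}$, namely $\nabla^{\cup}_s t - \nabla^{\cup}_t s = [s,t]$ for all $s,t \in C^{\infty}(X_1\cup_f X_2, \Lambda^1(X_1\cup_f X_2))$, by working pointwise and using the structural lemmas already established. Equivalently, since the torsion tensor is precisely the obstruction, it suffices to show $T^{\nabla^{\cup}}(s,t)(x) = 0$ for every $x$, and Lemma \ref{torsion:tensor:over:factors:lem} reduces this, case by case, to the vanishing of $T^{\nabla^1}(s_1,t_1)$ and $T^{\nabla^2}(s_2,t_2)$ at the appropriate preimage points.

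Concretely, I would split into the three cases according to the location of $x$. If $x \in i_1(X_1\setminus Y)$, then Lemma \ref{torsion:tensor:over:factors:lem} gives $T^{\nabla^{\cup}}(s,t)(x) = (\tilde{\rho}_1^{\Lambda})^{-1}(T^{\nabla^1}(s_1,t_1)(i_1^{-1}(x)))$, and since $\nabla^1$ is symmetric by hypothesis its torsion vanishes identically, so this term is zero; the case $x \in i_2(X_2\setminus f(Y))$ is identical using $\nabla^2$. For the remaining case $x \in i_2(f(Y))$, Lemma \ref{torsion:tensor:over:factors:lem} expresses $T^{\nabla^{\cup}}(s,t)(x)$ as $\tfrac12(\tilde{\rho}_1^{\Lambda}\oplus\tilde{\rho}_2^{\Lambda})^{-1}$ applied to $T^{\nabla^1}(s_1,t_1)(\tilde{i}_1^{-1}(x)) \oplus T^{\nabla^2}(s_2,t_2)(i_2^{-1}(x))$, and both summands vanish again by the symmetricity of $\nabla^1$ and $\nabla^2$ respectively. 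Hence $T^{\nabla^{\cup}}(s,t) \equiv 0$, which is exactly the assertion that $\nabla^{\cup}$ is symmetric.

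The only mild subtlety worth spelling out is that one must check that the sections $s_1, s_2$ (and $t_1, t_2$) obtained by splitting $s$ and $t$ as in Section \ref{splitting:sections:sect} really are a compatible pair in the sense required for Lemma \ref{torsion:tensor:over:factors:lem} to apply — but this is built into the construction of $\nabla^{\cup}$ and was already implicitly used in proving that $\nabla^{\cup}$ is a connection, so it may simply be cited. I do not anticipate a genuine obstacle here: the conceptual work has all been front-loaded into Lemmas \ref{covariant:derivatives:and:gluing:lem} and \ref{torsion:tensor:over:factors:lem} and Proposition \ref{lie:bracket:and:gluing:prop}, and once those are in hand the proof of this theorem is a one-line deduction in each of the three cases. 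The "hardest" part is merely bookkeeping: making sure the factor of $\tfrac12$ over the gluing region is harmless, which it is since it multiplies something already equal to zero.
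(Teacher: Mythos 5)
Your proposal is correct and follows exactly the paper's own argument: the theorem is deduced immediately from Lemma \ref{torsion:tensor:over:factors:lem}, since the vanishing of $T^{\nabla^1}$ and $T^{\nabla^2}$ forces $T^{\nabla^{\cup}}$ to vanish in each of the three cases. Your extra remarks about compatibility of the split sections and the harmless factor of $\tfrac12$ are sound but not needed beyond what the paper already states.
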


\begin{proof}
It suffices to observe that if $T^{\nabla^1}$ and $T^{\nabla^2}$ are both trivial, then by Lemma \ref{torsion:tensor:over:factors:lem} the torsion tensor $T^{\nabla^{\cup}}$ of the induced connection is trivial 
as well.
\end{proof}

\begin{cor}
If $\nabla^1$ and $\nabla^2$ are the Levi-Civita connections then $\nabla^{\cup}$ is the Levi-Civita connection as well.
\end{cor}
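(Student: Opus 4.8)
The plan is to verify that $\nabla^{\cup}$ possesses the two defining properties of a diffeological Levi-Civita connection on $(\Lambda^1(X_1\cup_f X_2),g^{\Lambda})$, namely symmetricity and compatibility with the pseudo-metric, and then to invoke uniqueness in order to replace ``a'' by ``the''. I would begin by fixing the standing data underlying the statement: $f:X_1\supseteq Y\to X_2$ is a gluing diffeomorphism with $\calD_1^{\Omega}=\calD_2^{\Omega}$; $g_1^{\Lambda}$ and $g_2^{\Lambda}$ are pseudo-metrics on $\Lambda^1(X_1)$ and $\Lambda^1(X_2)$ that are compatible with the gluing along $f$; $\nabla^i$ is the Levi-Civita connection of $(\Lambda^1(X_i),g_i^{\Lambda})$; and $\nabla^1$, $\nabla^2$ are compatible in the sense of Definition \ref{compatible:connections:lambda:defn}. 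Under these hypotheses Theorem \ref{induced:pseudo-metric:on:glued:lambda:thm} produces the induced pseudo-metric $g^{\Lambda}$, and the splitting of Section \ref{splitting:sections:sect} together with the subsequent definition produces the induced operator $\nabla^{\cup}$, which is a diffeological connection on $\Lambda^1(X_1\cup_f X_2)$ by the corresponding theorem. Finally, $\Lambda^1(X_1\cup_f X_2)$ has finite-dimensional fibres by Theorem \ref{individual:fibres:of:lambda:thm}, so the uniqueness statement (Theorem \ref{symm:comp:connection:is:unique:thm} together with the Remark following it) applies to it, justifying the use of the definite article.

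The main body of the argument is then simply to assemble two results already proved in this section. Since $\nabla^1$ and $\nabla^2$ are Levi-Civita connections, each is in particular symmetric; as they are also compatible with the gluing along $f$, Theorem \ref{gluing:symmetric:connections:yields:symmetric:connection:thm} yields that $\nabla^{\cup}$ is symmetric. Likewise, since each $\nabla^i$ is compatible with $g_i^{\Lambda}$ and the two pseudo-metrics are compatible with the gluing, the compatibility theorem of the preceding subsubsection yields that $\nabla^{\cup}$ is compatible with $g^{\Lambda}$. Thus $\nabla^{\cup}$ is a symmetric connection on $\Lambda^1(X_1\cup_f X_2)$ compatible with the pseudo-metric $g^{\Lambda}$, hence a diffeological Levi-Civita connection; by the uniqueness just recalled it is \emph{the} Levi-Civita connection, which is the assertion.

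The only point that I expect to require genuine attention --- and thus the main (if minor) obstacle --- is the hypothesis that the connections $\nabla^1$ and $\nabla^2$ be compatible in the sense of Definition \ref{compatible:connections:lambda:defn}: either one takes this as part of the standing assumptions of the subsection (it is in any case needed for $\nabla^{\cup}$ to be defined at all), or one derives it from the compatibility of $g_1^{\Lambda}$ and $g_2^{\Lambda}$. I would carry out the latter by feeding the Koszul-type identity underlying the proof of Theorem \ref{symm:comp:connection:is:unique:thm} into the gluing formulas already available: the behaviour of the action of sections on smooth functions (Lemma \ref{section-lambda:acts:on:h:lem}) and of the Lie bracket (Proposition \ref{lie:bracket:and:gluing:prop}), together with the identity $g_1^{\Lambda}(y)(\omega_1,\mu_1)=g_2^{\Lambda}(f(y))(\omega_2,\mu_2)$ on compatible pairs (Definition \ref{compatible:pseudo-metrics:lambda:defn}), force the defining identity of Definition \ref{compatible:connections:lambda:defn} to hold for all pairs of compatible sections $s_i$. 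Once this is in place, no further computation is needed and the corollary follows at once from the two cited theorems.
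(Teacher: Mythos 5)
Your proposal is correct and matches the paper's (implicit) argument: the corollary is stated without proof precisely because it follows at once from the compatibility theorem and Theorem \ref{gluing:symmetric:connections:yields:symmetric:connection:thm}, with uniqueness supplied by Theorem \ref{symm:comp:connection:is:unique:thm}. Note that the paper simply keeps the compatibility of $\nabla^1$ and $\nabla^2$ (Definition \ref{compatible:connections:lambda:defn}) as a standing hypothesis of both cited theorems, so your optional Koszul-type derivation of it, while plausible, is not needed and is not attempted in the paper.
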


\vspace{1cm}

\noindent University of Pisa \\
Department of Mathematics \\
Via F. Buonarroti 1C\\
56127 PISA -- Italy\\
\ \\
ekaterina.pervova@unipi.it\\

\end{document}